\documentclass[11pt]{article}
\usepackage{hyperref,etex,amsfonts,amssymb,amsmath,amsthm,amscd,euscript,
array,mathrsfs,multirow,booktabs}
\usepackage[title,titletoc,toc]{appendix}
\usepackage[all]{xy}

\input{mymacros-3.sty}
\addtolength\textwidth{3.2cm}
\addtolength{\oddsidemargin}{-1.6cm}
\addtolength{\evensidemargin}{-1.6cm}
\addtolength{\textheight}{2cm}
\addtolength{\topmargin}{-2cm}

\author{Siddhartha Sahi\footnote{Department 
of Mathematics, Rutgers University, \texttt{sahi@math.rutgers.edu}.
}
\,, Hadi Salmasian\footnote{
Department of Mathematics and Statistics, University of Ottawa, 
\texttt{hsalmasi@uottawa.ca}.
}}

\begin{document}

\title{The Capelli problem for $\gl(m|n)$ and the spectrum of invariant
differential operators}

\maketitle

\begin{abstract}
The ``Capelli problem''
for the symmetric pairs
$(\gl\times \gl,\gl)$
$(\gl,\g{o})$, and 
$(\gl,\g{sp})$ is closely related to the theory of
 Jack polynomials and shifted Jack polynomials for  
 special values of the parameter (see \cite{KostantSahi1}, \cite{KostantSahi2}, \cite{KnopSahi},
\cite{Okounkov}).  In this paper, we extend this connection to the Lie superalgebra setting, namely to the 
supersymmetric pairs 
$(\g g,\g k):=(\gl(m|2n),\g{osp}(m|2n))$ and 
$(\gl(m|n)\times\gl(m|n),\gl(m|n))$, acting on
$W:=\sS^2(\C^{m|2n})$ and $\C^{m|n}\otimes(\C^{m|n})^*$.

To achieve this goal, we first prove that the center of the universal 
enveloping algebra of the Lie superalgebra $\g g$
maps surjectively onto the algebra $\sPD(W)^{\g g}_{}$ of $\g g$-invariant differential
operators on the superspace $W$, thereby
providing an affirmative answer to the ``abstract'' Capelli problem for $W$. Our
proof works more generally for $\gl(m|n)$ 
acting on $\sS^2(\C^{m|n})$ and
is new even for the ``ordinary'' cases  
($m=0$ or $n=0$)
considered by
Howe and Umeda in \cite{HoweUmeda}.

We next describe a natural basis $\left\{ D_{\lambda }\right\} $ of 
$\sPD(W)^{\g g}$, that we call the Capelli basis. Using the above result on the abstract Capelli problem, we
generalize the work of Kostant and Sahi \cite{KostantSahi1}, \cite{KostantSahi2}, \cite{Sahi} by showing that the spectrum of 
$D_{\lambda }$ is given by a polynomial $c_{\lambda }$, which is
characterized uniquely by certain vanishing and symmetry properties.

We further show that the top homogeneous parts of the eigenvalue polynomials 
$c_{\lambda }$ coincide with the spherical polynomials $d_{\lambda }$, which
arise as radial parts of $\g k$-spherical vectors of finite
dimensional $\g g$-modules, and which are super-analogues of
Jack polynomials. This generalizes results of Knop and Sahi \cite{KnopSahi}.

Finally, we make a precise connection between the  polynomials $c_\lambda$ and
the shifted super Jack polynomials of Sergeev and Veselov \cite{SerVes} for special values of the parameter. We show
that the two families are related by a change of coordinates that we
call the \textquotedblleft Frobenius transform\textquotedblright. 
\\[1.5mm]
\emph{Keywords:} Lie superalgebras, the Capelli problem, super Jack
polynomials, shifted super Jack polynomials.\\[1.5mm]
\emph{MSC2010:} 17B10, 05E05.

\end{abstract}

\section{Introduction}

One of the most celebrated results in classical invariant theory is the
Capelli identity for $n\times n$ matrices. It plays a fundamental role in
Hermann Weyl's book \cite{Weyl}. The Capelli identity is profoundly connected to the 
representation theory of the Lie algebra $\gl(n)$.
The well-known article of Roger Howe 
\cite{HoweRem}
elucidated this
connection by giving a conceptual proof of the Capelli identity.

Later on, Howe and Umeda \cite{HoweUmeda} 
generalized the Capelli identity to the
setting of multiplicity-free spaces by posing and solving two general
questions, which they called the \emph{abstract} and \emph{concrete} Capelli
problems. 
Let $\g g$ be a complex reductive Lie algebra, and $W$ be a
multiplicity-free $\g g$-space.
 The abstract Capelli problem asks
whether the centre $\bfZ(\g g)$ of $\bfU(\g g)$, the universal enveloping
algebra of $\g g$, maps surjectively onto the algebra $\sPD(W)^{\g g}$ of $\g g$%
-invariant polynomial-coefficient differential operators on $W$. The
concrete Capelli problem asks for explicit elements of $\bfZ(\g g)$ whose
images generate $\sPD(W)^\g g$.

Around the same time as \cite{HoweUmeda}, Kostant and Sahi \cite{KostantSahi1}, \cite{KostantSahi2} considered a slightly
different question, which we shall refer to here as the Capelli \emph{eigenvalue} problem. It turns out that the algebra $\sPD(W)^{\g g}$ admits a
natural basis $D_{\lambda }$ -- the \emph{Capelli basis} -- which is indexed
by the monoid of highest weights of $\g g$-modules $V_{\lambda }$
occurring in the symmetric algebra $\sS\left( W\right) $. The Capelli
eignevalue problem asks for determination of the eigenvalue of $D_{\mu }$
on $V_{\lambda }$. It turns out that these eignvalues are of the form $\varphi
_{\mu }\left( \lambda +\rho \right) $, where  $\varphi _{\mu }$ is a
symmetric polynomial and $\rho $ is a certain ``rho-shift". Although \cite{KostantSahi1},
\cite{KostantSahi2} are
written in the context of symmetric spaces, similar ideas work for the
multiplicity-free setting, see \cite{Knop}.

In \cite{Sahi}, it is shown that the polynomials $\varphi _{\mu }$ are uniquely
characterized by certain vanishing conditions. In fact \cite{Sahi} considers a
general class of polynomials, depending on several parameters, and in
\cite{KnopSahi} it is shown that a one-parameter subfamily of these polynomials is
closely related to \emph{Jack polynomials}. More precisely, the Knop--Sahi
polynomials are inhomogeneous polynomials, but their top degree terms are the
(homogenenous) Jack polynomials. In the special case where the 
value of the parameter $\theta$
corresponds to a symmetric space, the Knop--Sahi polynomials are the
eigenvalue polynomials $\varphi _{\lambda }$, and the Jack polynomials are the
spherical polynomials in $V_{\lambda }$.
Many properties of the  Knop--Sahi polynomials were subsequently proved by Okounkov and Olshanski
\cite{Okounkov}, who worked with a slight modification of these polynomials which they call \emph{shifted Jack polynomials}.

The goal of our paper is to extend this circle of ideas to the Lie superalgebra setting. 
The Jack polynomials for
$\theta=1,\frac{1}{2},2$ 
are spherical polynomials of the symmetric pairs 
\[
(\gl(n)\times \gl(n),\gl(n)),\
(\gl(n),\g{o}(n)),\
(\gl(2n),\g{sp}(2n)).
\]
In the Lie superalgebra setting the last two  come together and  there are only two pairs to consider, namely 
\[
(\gl(m|n)\times \gl(m|n),\gl(m|n))\text{ and }(\gl(m|2n),\g{osp}(m|2n)).
\]
The extension of the theory to the first case is easier and, as we show in 
Appendix \ref{appxB},  can
be achieved by combining the work of  Molev 
\cite{Molev} with that of Sergeev--Veselov 
\cite{SerVes}.
Therefore in the body of the paper 
we concentrate on the second case, which corresponds 
to the  action of $\gl(m|2n)$ on $\sS^2(\C^{m|2n})$. 

In the extension of the aforementioned theory to Lie superalgebras, at least two serious difficulties arise. The first issue is that complete reducibility of finite dimensional representations 
fails in the case of Lie superalgebras. 
Complete reducibility is crucial in the approach to the abstract Capelli problem in \cite{HoweUmeda} and \cite{GoodmanWallach},
where it is needed to split off the $\g g$-invariants in $\bfU(\g g)$ and $\sPD(W)^{\g g}$.

The second issue that arises is that the 
Cartan--Helgason Theorem is not known in full generality for Lie superalgebras. In particular, it is not immediately obvious that every $\gl(m|2n)$-module that appears in $\sP(\sS^2(\C^{m|2n}))$ has an 
$\g{osp}(m|2n)$-invariant vector. Furthermore, in the purely even case the fact that a spherical vector is determined uniquely by its radial component  follows from the $KAK$ decomposition, which does not have an analogue in the context of supergroups.  These issues complicate the formulation and proof of  
Theorem \ref{prpgw} and Theorem \ref{MAINTHM}.

To achieve our goal in this paper, we have to overcome  the above difficulties.
Our first main result is  Theorem 
\ref{prpgw}, where we give an affirmative answer to the abstract Capelli problem of Howe
and Umeda for the Lie superalgebra $\gl(m|n)$ acting on 
$\sS^2(\C^{m|n})$. In fact in Theorem \ref{prpgw} we prove a slightly more
precise statement that a $\gl(m|n)$-invariant differential operator of order 
$d$ is in the image of an element of $\bfZ(\gl(m|n))$ which has order $d$
with respect to the standard filtration of $\bfU(\gl(m|n))$. The latter
refinement is needed in the proofs of Theorem \ref{MAINTHM} and Theorem \ref{thmconnSV}, 
which we will elaborate on below.
For the superpair $(\gl(m|n)\times\gl(m|n),\gl(m|n))$,  
the corresponding abstract Capelli theorem indeed follows 
directly from the work of Molev \cite{Molev}, as explained in Appendix \ref{appxB} (see Theorem \ref{thabsscap}).

Our second main goal  concerns
the Capelli eigenvalue problem for the Lie superalgebra  $\gl(m|2n)$ acting on $\sS^2(\C^{m|2n})$.
Extending \cite[Theorem 1]{Sahi}, we show 
that the eigenvalues of the Capelli operators are given by polynomials 
$c_\lambda$
given in Definition \ref{dfcl}  
that are characterized by suitable symmetry and  vanishing conditions
(see Theorem \ref{thm-unqclam}).
Furthermore, in Theorem %
\ref{MAINTHM} we show that the 
top degree homogeneous term of the eigenvalue polynomial 
$c_\lambda$  is equal to the spherical polynomial $d_\lambda$ given in Definition \ref{dfbr}. 
This extends the result proved by Knop and Sahi 
in \cite{KnopSahi}
explained  above.
The corresponding result for the superpair $(\gl(m|n)\times \gl(m|n),\gl(m|n))$ is Theorem \ref{THMAppB2}. In this case the spherical and eigenvalue polynomials turn out to be the
well known \emph{supersymmetric Schur polynomials} and \emph{shifted supersymmetric Schur} polynomials \cite{Molev}.

Our third main goal is to establish a precise relation between our
eigenvalue polynomials $c_\lambda$ and certain polynomials called
\emph{shifted super Jack
polynomials}. 
In
connection with the eigenstates of the deformed Calogero--Moser--Sutherland
operators
\cite{SerVes}, Sergeev and Veselov define a family of $(m+n)$-variable
polynomials $SP^*_\flat$,  parametrized by $(m,n)$-hook partitions $\flat$
(see Definition \ref{dfnhookprn}). 
In the case of the superpair $(\gl(m|2n),\g{osp}(m|2n))$,
in Proposition \ref{prpQlam} and Theorem \ref{thmconnSV}, we
prove that the $c_\lambda$ and the $SP^*_\flat$ are related by the \emph{Frobenius transform}, see  Definition \ref{FrobT}. 
The corresponding statement for the pair
$(\gl(m|n)\times\gl(m|n),\gl(m|n))$ is Theorem 
\ref{THMAB3}.

It is worth
mentioning that in \cite{SerVes}, the shifted super Jack polynomials 
are obtained as
the image of the shifted Jack polynomials under a certain \emph{shifted Kerov map}, and the fact that the image of the shifted Kerov map is a
polynomial is indeed a nontrivial statement which is proved in \cite{SerVes} indirectly. However, our definition of $c_\lambda$ is
more conceptual, and the proofs are more straightforward, as they are based
on the Harish--Chandra homomorphism and the solution of the abstract Capelli problem.
Furthermore, the work of Sergeev and Veselov does not address the relation
with spherical representations of Lie superalgebras. 
Our paper establishes this connection.

We now outline the structure of this article. Section \ref{section1} defines
the basic notation that is used throughout the rest of the paper. The
solution to the abstract Capelli problem for $\gl(m|n)$ 
acting on $\sS^2(\C^{m|n})$ 
is given in Section \ref%
{prfof571}. In Section \ref{Secsuperpr} we study spherical highest weight
modules of $\gl(m|2n)$. We prove in Proposition \ref{le-dlam} and Remark \ref{rmk-v*} 
that every irreducible $\gl(m|2n)$-submodule of $\sS(\sS^2(\C^{m|2n}))$ or $\sP(\sS^2(\C^{m|2n}))$
has a
unique (up to scalar) nonzero $\g{osp}(m|2n)$-fixed vector. It is worth mentioning that  Proposition \ref{le-dlam} does not follow from
the work of Alldridge and Schmittner \cite{AllSch}, since they need to
assume that the highest weight is ``high enough'' in some sense. In Section 
\ref{Sec-Sec5},
we prove Theorem \ref{MAINTHM}
and Theorem
\ref{thm-unqclam} (see the second goal above). 
Section 
\ref{SecRelSer} is devoted to connecting the eigenvalue polynomials $c_\lambda$ to the 
shifted super Jack polynomials of 
 Sergeev and Veselov \cite{SerVes}.
Appendix \ref{sec-pflem} contains the proof of 
Proposition \ref{DGBRVVV}. Finally, in Appendix B we outline the proofs of our main theorems for the case of 
$\gl(m|n)\times\gl(m|n)$ acting on $\C^{m|n}\otimes (\C^{m|n})^*$.

We now briefly describe the structure of our proofs.
There is no mystery in the formulation of the
statement of Theorem \ref{prpgw}, but its proof is \emph{not} a simple
generalization of any of the existing proofs in the purely even case (i.e.,
when $n=0$) that we are aware of. Our approach is inspired by the proof given
by Goodman and Wallach in 
\cite[Sec. 5.7.1]{GoodmanWallach}, but it diverges quickly because their argument
relies heavily on the complete reducibility of rational representations of a
reductive algebraic group. Our proof proceeds by induction, after we show that the symbol of an invariant differential operator is in the image of $\bfZ(\gl(m|n))$. This symbol is in the span of invariant tensors 
$\bft_\sigma$ defined in \eqref{tsigmeqqqqn} where  $\sigma$ is a permutation. The next step is to reduce the latter problem to the case where $\sigma$ is a cycle of consecutive letters. Finally, we prove that in this special case, $\bft_\sigma$ is in the image of 
the \emph{Gelfand elements} $\mathrm{str}(\mathbf E^d)\in\bfZ(\gl(m|n))$ defined 
in Lemma \ref{gelfzhel}.

The idea behind the proof of Theorem \ref{MAINTHM} is as follows. Let $D_\lambda$ be a Capelli operator, and let $z_\lambda\in\bfZ(\gl(m|2n))$ be an element in the inverse image of $D_\lambda$, whose existence  follows from Theorem \ref{prpgw}.  
We show that 
modulo the natural isomorphism induced by the trace form,
the spherical polynomial $d_\lambda$ is the diagonal restriction of the symbol of the 
polynomial-coefficient differential operator corresponding to the radial part of $z_\lambda$. Furthermore, we show that the eigenvalue polynomial $c_\lambda$ 
is the image of the  
 radial part of $z_\lambda$  (see  Lemma \ref{cmulZa}). We combine the latter two statements, as well as   the refinement of the abstract Capelli problem 
obtained in Theorem \ref{prpgw}, to prove Theorem \ref{MAINTHM}.

Finally, the proof of Theorem \ref{thmconnSV} goes as follows. 
By considering the action of $z_\lambda$ on the lowest weight vector of an irreducible $\gl(m|2n)$-module $V_\mu$, we prove 
in Proposition \ref{prpQlam} that $c_\lambda(\mu)$ is a polynomial in the highest weight $\mu^*$ of the contragredient representation $V_\mu^*$. Denoting the latter polynomial $c_\lambda^*$, we verify that after a \emph{Frobenius transform} (see Definition \ref{FrobT}),  the polynomial $c_\lambda^*$ satisfies the supersymmetry and vanishing properties of the shifted super Jack polynomials of \cite{SerVes}. It then follows that the two polynomials 
coincide up to a scalar multiple.

We now elaborate on some of the new techniques and ideas 
introduced in our paper. Our method of proof
of Theorem \ref{prpgw}
yields a recursive procedure for
expressing a given invariant differential operator explicitly as the image
of an element of $\bfZ(\gl(m|n))$ using Gelfand elements. In the setting of ordinary Lie algebras, 
Howe and Umeda \cite{HoweUmeda} obtain such a formula for the 
\emph{generators} of the algebra of invariant differential operators. By contrast, even 
in this setting, our construction 
is more general and gives explicit pre-images for a \emph{basis} of the algebra. 

Another new idea is the construction of spherical vectors in tensor representations of $\gl(m|2n)$ using symbols of the Capelli operators $D_\lambda$ (see Lemma \ref{lemZnZk=0}).\\

\noindent\textbf{Acknowledgement.} 
We thank the referee for useful comments and pointing to references for Lemma \ref{gelfzhel}.
Throughout this project, we benefited from discussions with Alexander Alldridge, Ivan Dimitrov, Roe Goodman, Roger Howe, Friedrich Knop, James Lepowsky, Nolan Wallach, and Tilmann Wurzbacher. We thank them  for  fruitful and encouraging discussions. The second author is supported by an NSERC Discovery Grant.

\section{Notation}

\label{section1} We briefly review the basic theory of vector superspaces
and Lie superalgebras. For more detailed expositions, see for example \cite%
{ChWabook} or \cite{Musson}. Throughout this article, all vector spaces will
be over $\C$.
Let $\SVec$ be the symmetric monoidal category of $\Ztwo$-graded vector
spaces,
where $\Ztwo:=\left\{\eev,\ood\right\}$.
Objects of $\SVec$ are of the form
$U=U_\eev\oplus U_\ood$. The parity of a homogeneous vector $u\in U$ is denoted by $|u|\in\Ztwo$.  
For any two 
$\Ztwo$-graded vector spaces  $U$ and $U'$, the vector space 
$\Hom_\C
(U,U^{\prime})$ is naturally  $\Ztwo$-graded, and the morphisms of 
$\SVec$ are defined by
$\Mor_{\SVec}(U,U^{\prime
}):=\Hom_\C
(U,U^{\prime})_\eev$.
The
 symmetry isomorphism of $\SVec$ is defined by
\begin{equation}
\label{symisomdf}
\braid_{U,U^{\prime }}:U\otimes U^{\prime }\to U^{\prime }\otimes U \ ,\
u\otimes u^\prime 
\mapsto (-1)^{|u|\cdot |u^{\prime }|}u^{\prime }\otimes u.
\end{equation}
We remark that throughout this article, the defining relations which involve
parities of vectors should first be construed as  relations for homogeneous
vectors, and then be extended by linearity to arbitrary vectors.

The identity element of the associative superalgebra  
$
\mathrm{End}_\C(U):=%
\Hom_\C(U,U)
$ 
will be denoted by $1_U$. Note that $\mathrm{End}_\C(U)$ is
a Lie superalgebra with the standard commutator $[A,B]:=AB-(-1)^{|A|%
\cdot|B|}BA$. Set  $U^*:=\Hom_\C(U,\C^{1|0})$. The map 
\begin{equation}  \label{U'U*}
U^{\prime}\otimes U^* \to
\Hom_\C(U,U^{\prime }) \ ,\ 
u^{\prime}\otimes u^*\mapsto T_{u^{\prime}\otimes u^*},
\end{equation}
where $T_{u^{\prime}\otimes u^*}(u):=\lag u^*,u\rag u^{\prime }$ for all $u\in U$, is
an isomorphism in the category $\SVec$.

The natural representation (in the symmetric monoidal category $\SVec$) of
the symmetric group $S_d$ on $U^{\otimes d}$ is explicitly given by 
\begin{equation}  \label{dfTsigm}
\sigma\mapsto T_{U,d}^\sigma\ \,,\,\ T^{\sigma}_{U,d}( v_1\otimes \cdots
\otimes v_{d}) := (-1)^{\seps\left(\sigma^{-1};v_1,\ldots,v_{d}\right)}
v_{\sigma^{-1}(1)}\otimes\cdots\otimes v_{\sigma^{-1}(d)}\,,
\end{equation}
where 
\begin{equation}  \label{sepssign}
\seps(\sigma;v_1,\ldots,v_d):= \sum_{\substack{ 1\leq r<s\leq d \\ %
\sigma(r)>\sigma(s) }} |v_{\sigma(r)}|\cdot|v_{\sigma(s)}|.
\end{equation}
The supersymmetrization map $\ssym^d_U:U^{\otimes d}\to U^{\otimes d}$ is defined by
\begin{equation}
\label{symdUeq}
\ssym^d_U :=\frac{1}{d!} \sum_{\sigma\in S_d} T^\sigma_{U,d}.
\end{equation}
 We set 
\[
\sS^d(U):=\ssym^d_U\left(U^{\otimes d}\right)
,\  
\sS(U):=\bigoplus_{d\ge 0}\sS
^d(U),\
\sP(U):=\sS(U^*), 
\text{ and }
\sP^d(U):=\sS^d( U^*).
\] 
Every $\eta\in U^*_\eev$ extends canonically to a homomorphism of superalgebras 
\begin{equation}  \label{dfheta}
\sfh_\eta:\sS(U)\to \C\cong \C^{1|0}
\end{equation}
defined by $\sfh_\eta \big(
\ssym^d_U(u_1\otimes \cdots \otimes u_d) \big)
:=\eta(u_1)\cdots \eta(u_d) $ for $d\geq 0$ and $u_1,\ldots,u_d\in U$.

If $\g g$ is a Lie superalgebra, 
then  $\bfU(\g g)$
denotes the universal enveloping algebra of $\g g$, and 
\[
\C=\bfU^0(\g g)\subset
\bfU^1(\g g)\subset\cdots
\subset
\bfU^d(\g g)\subset\cdots
\]
denotes the standard filtration of $\bfU(\g g)$.
If $(\pi,U)$ is a $\g g$-module, we define 
\begin{equation*}
U^\g g:=\{u\in U\ :\ \pi(x)u=0\text{ for every }x\in\g g\}. 
\end{equation*}
If $(\pi^{\prime },U^{\prime })$ is another $\g g$-module, then $\Hom_\C%
(U,U^{\prime })$ is also a $\g g$-module, with the action 
\begin{equation}  \label{xcdtTT}
(x\cdot T)u:=\pi^{\prime}(x)Tu-(-1)^{|T||x|}T\pi(x)u\,\ 
\end{equation}
for $x\in\g g$, $T\in\Hom_\C%
(U,U^{\prime })$, and $u\in U$.
The special case $U^*:=\Hom_\C(U,\C^{1|0})$, where $\C^{1|0}$ is the trivial $%
\g g$-module, is the \emph{contragredient} $\g g$-module. Moreover,
the map \eqref{U'U*} 
is $\g g$-equivariant. Set

\begin{equation*}
\Hom_\g g(U,U^{\prime }):= \Hom_\C(U,U^{\prime})^{\g g} \ \text{ and }\ \mathrm{%
End}_\g g(U):=\mathrm{End}_\C(U)^\g g. 
\end{equation*}
The category of $\g g$-modules is a symmetric monoidal category, and 
\[
\Mor_{\g g\text{-mod}}(U,U^{\prime })\cong \Hom_\g g(U,U^{\prime })_\eev
.\]

Fix a $\Ztwo$-graded vector space $W$. For every homogeneous $w\in W$, let $\partial_w$
be the superderivation of $\sP(W)$ with parity $|w|$ that is defined
uniquely by 
\begin{equation}  \label{pww*}
\partial_w(w^*):=(-1)^{|w|}\lag w^*,w\rag\text { for every } w^*\in W^*\cong %
\sP^1(W).
\end{equation}
Thus, $\partial_w(a_1a_2)=(\partial_w a_1)a_2+(-1)^{|w|\cdot|a_1|}a_1\partial_wa_2$ for
every $a_1,a_2\in\sP(W)$. 
For every $b\in \sS(W)$ we define $\partial_b\in%
\mathrm{End}_\C(\sP(W))$ as follows. First we set 
\begin{equation*}
\partial_{w_1\cdots w_r}:=\partial_{w_1}\cdots\partial_{w_r} \text{ for homogeneous }%
w_1,\ldots,w_r\in W, 
\end{equation*}
and then we extend the definition of $\partial_{b}$  to all $b\in \sS(W)$ by linearity.

Let $\sPD(W)$ be the associative superalgebra of polynomial-coefficient
differential operators on $W$. More explicitly, $\sPD(W)$ is the subalgebra
of $\mathrm{End}_\C(\sP(W))$ spanned by elements of the form $a\partial_b$,
where $a\in \sP(W)$ and $b\in\sS(W)$. If $W$ is a $\g g$-module, then $\sPD%
(W)$ is a $\g g$-invariant subspace of $\mathrm{End}_\C(\sP(W))$ with
respect to the $\g g$-action on $\mathrm{End}_\C(\sP(W))$ defined in %
\eqref{xcdtTT}. Furthermore, the map 
\begin{equation}  \label{eq-sfm:}
\sfm:\sP(W)\otimes \sS(W)\to \sPD(W)\ ,\ a\otimes b\mapsto a\partial_b
\end{equation}
is an isomorphism of $\g g$-modules (but not of superalgebras). The superalgebra $\sPD(W)$
has a natural filtration given by 
\begin{equation*}
\sPD^d(W):=\sfm\left(\sP(W)\otimes\left(\bigoplus_{r=0}^d \sS%
^r(W)\right)\right)\text{ for }d\geq 0. 
\end{equation*}
For $D\in\sPD(W)$, we write $\mathrm{ord}(D)=d$ if  $%
D\in\sPD^d(W)$ but $D\not\in\sPD^{d-1}(W)$. For every $d\geq 0$, the $d$-th
order symbol map 
\begin{equation*}
\widehat\sfs_d:\sPD^{d}(W)\to \sP(W)\otimes \sS^d(W) 
\end{equation*}
is defined by 
\begin{equation}  \label{eq-shmphh}
\widehat\sfs_d\left(\sPD^{d-1}(W)\right)=0\text{\, and\ \,} \widehat\sfs_d(%
\sfm(a))=a\ \text{ for }\ a\in \sP(W)\otimes \sS^d(W).
\end{equation}
If $D_r\in\sPD^{d_r}(W)$ for $1\leq r\leq k$, then 
\begin{equation}  \label{whsD1Dk}
\widehat\sfs_{d_1+\cdots+d_k}(D_1\ldots D_k)= \widehat\sfs_{d_1}(D_1)\cdots
\widehat\sfs_{d_k}(D_k),
\end{equation}
where the product on the right hand side takes place in the superalgebra 
$\sP(W)\otimes\sS(W)$.

\section{The abstract Capelli problem for $W:=\sS^2(V)$}

\label{prfof571}

Fix $V:=\C^{m|n}$ and set $\g g:=\gl(V):=\gl(m|n)$. We fix bases $\bfe%
_1,\ldots,\bfe_m$ for $V_\eev=\C^{m|0}$ and $\bfe_{\oline 1},\ldots,\bfe_{%
\oline n} $ for $V_\ood=\C^{0|n}$. Throughout this article we will use the
index set 
\begin{equation*}
\mathcal{I}_{m,n}:=\{1,\ldots,m,\oline 1,\ldots,\oline n\}. 
\end{equation*}
We define the parities of elements of $\mathcal{I}_{m,n}$ by 
\begin{equation*}
\left|i\right|: =\left|\bfe_i\right |= 
\begin{cases}
\eev & \text{ for }i\in\{1,\ldots,m\}, \\ 
\ood & \text{ for } i\in\{\oline 1,\ldots,\oline n\}.%
\end{cases}
\end{equation*}

For an associative superalgebra 
 $\cA=\cA_\eev\oplus\cA_\ood$, 
let $\mathrm{Mat}_{m,n}(\cA)$ be
the associative superalgebra of $(m+n)\times (m+n)$ matrices with entries in 
$\cA$, endowed with the $\Z_2$-grading obtained by the $(m,n)$-block form of
its elements. More precisely, $\mathrm{Mat}_{m,n}(\cA)_\eev$ consists of
matrices in $(m,n)$-block form 
\begin{equation}  \label{blockformgg}
\begin{bmatrix}
A & B \\ 
C & D%
\end{bmatrix}%
\end{equation}
such that the entries of the $m\times m$ matrix $A$ and the $n\times n$
matrix $D$ belong to $\cA_\eev$, whereas the entries of the matrices $B$ and 
$C$ belong to $\cA_\ood$. Similarly, $\mathrm{Mat}_{m,n}(\cA)_\ood$ consists
of matrices of the form \eqref{blockformgg} such that the entries of $A$ and 
$D$ belong to $\cA_\ood$, whereas the entries of $B$ and $C$ belong to $\cA_%
\eev$.  The supertrace of an element of $\mathrm{Mat}_{m,n}(\cA)$ is defined
by 
\begin{equation*}
\str\left( 
\begin{bmatrix}
A & B \\ 
C & D%
\end{bmatrix}
\right)=\mathrm{tr}(A)-\mathrm{tr}(D). 
\end{equation*}

Using the basis $\left\{\bfe_i\,:\,i\in\mathcal{I}_{m,n}\right\}$, we can
represent elements of $\g g$ by  $(m+n)\times (m+n)$ matrices, with rows
and columns indexed by elements of $\mathcal{I}_{m,n}$. For every $i,j\in%
\mathcal{I}_{m,n}$, let $E_{i,j}$ denote the element of $\g g$ corresponding
to the matrix with a 1 in the $(i,j)$-entry and 0's elsewhere. The standard
Cartan subalgebra of $\g g$ is 
\begin{equation*}
\g h:=\spn_\C\{E_{1,1},\ldots,E_{m,m},E_{\oline 1,\oline 1},\ldots, E_{%
\oline n,\oline n}\}, 
\end{equation*}
and the standard characters of $\g h$ are $\eps_1,\ldots,\eps_m,\eps_{\oline %
1},\ldots,\eps_{\oline n}\in\g h^*$, where $\eps_i(E_{j,j})=\delta_{i,j} $
for every $i,j\in\mathcal{I}_{m,n} $. The standard root system of $\g g$ is $%
\Phi:=\Phi^+\cup \Phi^-$ where 
\begin{equation*}
\Phi^+:=\Big\{
\eps_k^{}-\eps_l^{} \Big\}_{1\leq k<l\leq m} \cup \left\{ \eps_k^{}-\eps_{%
\oline l}^{} \right\} _{ 1\leq k\leq m,\,1\leq l\leq n} \cup \left\{ \eps_{%
\oline k}^{}-\eps_{\oline l}^{} \right\}_{1\leq k<l\leq n} 
\end{equation*}
and $\Phi^-:=-\Phi^+$. 
For $\alpha\in\Phi$, we set $\g g_\alpha:=\{x\in\g g\,:\, [h,x]=\alpha(h)x\text{ for every }h\in\g h\}$.
Then  $\g g=\g n^-\oplus\g h\oplus\g n^+$,
where $\g n^\pm:=\bigoplus_{\alpha\in\Phi^\pm}\g g_\alpha$. Finally, the
supersymmetric bilinear form 
\begin{equation}  \label{kappa}
\kappa:\g g\times\g g\to\C\ ,\ \kappa(X,Y):=\mathrm{str}(XY)\text{ for }
X,Y\in\g g,
\end{equation}
is nondegenerate and $\g g$-invariant.

Set $W:=\sS^2(V)$. The standard representation of $\g g$ on $V$ gives rise to
canonically defined representations on $V^{\otimes d}$, $W$, $W^*$, $\sS(W)$%
, and $\sP(W)$. From now on, we denote the latter representations of $\g g$
on $\sS(W)$ and $\sP(W)$ by $\rho$ and $\check\rho$ respectively.

The goal of the rest of this section is to prove Theorem \ref{prpgw}, which gives an affirmative answer to the abstract Capelli problem for the $\g g$-module $W$. Set 
\begin{equation*}
\bfZ(\g g):=\{x\in\bfU(\g g)\,:\,[x,y]=0\text{ for every }y\in\bfU(\g g)\}, 
\end{equation*}
where $[\,\cdot,\cdot\,]$ denotes the standard superbracket of $\bfU(\g g)$. 
\begin{lem}
\label{gelfzhel}
Let $\mathbf E\in\mathrm{Mat}_{m,n}(\bfU(\g g))_\eev$ be the matrix with entries 
\[
\mathbf E_{i,j}
:=
(-1)^{|i|\cdot|j|}E_{j,i}
\text{ for }i,j\in\mathcal I_{m,n}.
\]
Then $\mathrm{str}(\mathbf E^d)\in\bfZ(\g g)$ for every $d\geq 1$.
\end{lem}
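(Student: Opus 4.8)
The plan is to show that $\str(\mathbf E^d)$ is central by proving it commutes with every generator $E_{k,l}$ of $\bfU(\g g)$, which suffices since the $E_{k,l}$ generate $\bfU(\g g)$ as an algebra. The key is the commutation relation between the matrix entries $\mathbf E_{i,j}$ and the generators. First I would record that, since $\mathbf E_{i,j} = (-1)^{|i||j|}E_{j,i}$ and $[E_{j,i},E_{k,l}] = \delta_{i,k}E_{j,l} - (-1)^{(|i|+|j|)(|k|+|l|)}\delta_{j,l}E_{k,i}$, one gets a relation of the schematic form
\begin{equation*}
[\mathbf E_{i,j}, E_{k,l}] = \delta_{l,i}(\pm)\mathbf E_{k,j} - \delta_{j,k}(\pm)\mathbf E_{i,l},
\end{equation*}
with signs that are powers of $-1$ depending on the parities $|i|,|j|,|k|,|l|$; these signs are exactly what is needed to make $\mathbf E$ behave like a "matrix of operators" whose supertrace powers are invariant. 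The cleanest way to organize this is to observe that $\mathbf E$ satisfies the defining relations of a Lie superalgebra acting by the adjoint-type formula $[\mathbf E_{ij}, \mathbf E_{kl}] = \delta_{jk}\mathbf E_{il} - (-1)^{(|i|+|j|)(|k|+|l|)}\delta_{il}\mathbf E_{kj}$ (this is the statement that $X \mapsto \mathbf E(X)$, extended linearly, is an anti-homomorphism or homomorphism $\g g \to \mathrm{Mat}_{m,n}(\bfU(\g g))$ intertwining bracket and commutator), which is a routine check from the $\gl(m|n)$ relations.

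Next I would compute $[\str(\mathbf E^d), E_{k,l}]$ directly. Writing $\str(\mathbf E^d) = \sum_{i_1,\dots,i_d} (-1)^{|i_1|}\mathbf E_{i_1 i_2}\mathbf E_{i_2 i_3}\cdots \mathbf E_{i_d i_1}$, I expand the commutator by the Leibniz rule into a sum of $d$ terms, the $r$-th term involving $[\mathbf E_{i_r i_{r+1}}, E_{k,l}]$. Substituting the schematic commutation relation above, each such term splits into two pieces carrying Kronecker deltas; after relabelling summation indices, the "second piece" of the $r$-th term cancels against the "first piece" of the $(r+1)$-th term — a telescoping phenomenon — with the supertrace sign $(-1)^{|i_1|}$ and the parity-dependent signs conspiring precisely so that the cancellation is exact. (Some care with the cyclic wrap-around term $r=d$ is needed, and this is where the supertrace — rather than the ordinary trace — is essential; the sign $(-1)^{|i_1|}$ absorbs the discrepancy that would otherwise obstruct cyclicity in the super setting.) After all cancellations one is left with $0$, giving $[\str(\mathbf E^d), E_{k,l}] = 0$ for all $k,l$, hence $\str(\mathbf E^d) \in \bfZ(\g g)$.

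The main obstacle I anticipate is bookkeeping of the signs: verifying that the parity-dependent powers of $-1$ appearing in the commutation relations, combined with the Koszul signs from moving $E_{k,l}$ past the $\mathbf E_{i_r i_{r+1}}$ factors in the Leibniz expansion and the supertrace sign $(-1)^{|i_1|}$, all match up so that the telescoping cancellation is sign-exact rather than sign-off. A convenient way to sidestep brute force is to note that these "quantum matrix" identities are well known for $\gl(n)$ (the Gelfand invariants $\tr(\mathbf E^d)$) and that the super case follows by the standard "super-signs are forced" principle: one may work in $\mathrm{Mat}_{m,n}(\C) \otimes \bfU(\g g)$ and use $\str(AB) = (-1)^{|A||B|}\str(BA)$ for homogeneous $A,B$, reducing the whole computation to the single identity $[\mathbf E_{ij},\mathbf E_{kl}] = \delta_{jk}\mathbf E_{il} - (-1)^{(|i|+|j|)(|k|+|l|)}\delta_{il}\mathbf E_{kj}$ together with super-cyclicity of the supertrace. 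Alternatively one can cite that $\mathbf E$ realizes $\g g$ inside $\mathrm{Mat}_{m,n}(\bfU(\g g))$ and invoke the general fact that for any such realization the "power traces" $\str(\mathbf E^d)$ land in the center — the referee's references mentioned in the acknowledgement presumably cover exactly this point.
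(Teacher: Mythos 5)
Your argument is essentially the paper's proof: the paper records the covariance relation $[\mathbf E_{i,j},E_{k,l}]=(-1)^{|i|\cdot|j|+|l|\cdot|j|}\delta_{i,k}\mathbf E_{l,j}-(-1)^{|j|\cdot|k|+|j|}\delta_{l,j}\mathbf E_{i,k}$ (note that your schematic version has the Kronecker deltas transposed, though the telescoping structure is the same), shows via the Leibniz rule that this relation is preserved under matrix multiplication --- which is exactly your telescoping cancellation, packaged as an induction on $d$ --- and then checks that the supertrace of any matrix satisfying the relation is central. The one caution is your proposed shortcut via $\mathrm{str}(AB)=(-1)^{|A|\cdot|B|}\mathrm{str}(BA)$: this identity fails for matrices over the noncommutative superalgebra $\bfU(\g g)$, but your main Leibniz/telescoping computation does not rely on it.
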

\begin{proof}
This lemma can be found for example in 
\cite{sergeev82} or \cite{scheunert}. 
For the reader's convenience, we outline a proof 
(in the case $n=0$ it
is due to 
\v{Z}helobenko 
\cite{Zhel}). \\

\noindent\textbf{Step 1.} Let $\mathbf Z=[z_{i,j}]_{i,j\in\mathcal I_{m,n}}^{}
$ be an element of $\mathrm{Mat}_{m,n}(\bfU(\g g))_\eev$ that satisfies
\begin{equation}
\label{xijEkl}
[z_{i,j},E_{k,l}]
=
(-1)^{|i|\cdot |j|+|l|\cdot |j|}
\delta_{i,k}z_{l,j}
-
(-1)^{|j|\cdot |k|+|j|}\delta_{l,j}z_{i,k}
\text{ for }
i,j,k,l\in\mathcal I_{m,n}.
\end{equation}
Set $z:=\mathrm{str}(\mathbf Z)=\sum_{i\in \mathcal I_{m,n}}(-1)^{|i|}
z_{i,i}$. Then $
\left[z,E_{k,l}\right]=0
$ 
for
$k,l\in\mathcal I_{m,n}$, and thus $z\in\bfZ(\g g)$.\\

\noindent\textbf{Step 2.}
Let $\mathbf Z=[z_{i,j}]_{i,j\in\mathcal I_{m,n}}^{}$ and $\mathbf Z'=[z'_{i,j}]_{i,j\in\mathcal I_{m,n}}^{}
$ be elements of 
$\mathrm{Mat}_{m,n}(\bfU(\g g))_\eev$
that satisfy
\eqref{xijEkl}. Set $\mathbf Z'':=\mathbf Z\mathbf Z'$, so that
$\mathbf Z''=[z''_{i,j}]_{i,j\in\mathcal I_{m,n}}^{}$ where
 $z''_{i,j}:=\sum_{r\in \mathcal I_{m,n}}z_{i,r}z'_{r,j}$.
 Then
\begin{align*}
\left[z''_{i,j},E_{k,l}\right]
&=
\sum_{r\in\mathcal I_{m,n}}z_{i,r}[z'_{r,j},E_{k,l}]
+
\sum_{r\in\mathcal I_{m,n}}
(-1)^{(|r|+|j|)(|k|+|l|)}[z_{i,r},E_{k,l}]z'_{r,j}\\
&=(-1)^{|j|\cdot|i|+|j|\cdot |l|}\delta_{i,k}z''_{l,j}
-
(-1)^{|j|\cdot |k|+|j|}\delta_{l,j}z''_{i,k},
\end{align*}
so that the entries of $\mathbf Z''$  
satisfy 
\eqref{xijEkl} as well.\\

\noindent\textbf{Step 3.}
Since \eqref{xijEkl}
holds for the entries of $\mathbf E$, 
induction on $d$ and Step 2 imply that \eqref{xijEkl} holds for the entries of $\mathbf E^d$ 
 for every  $d\geq 1$, and thus $\mathrm{str}(\mathbf E^d)\in \bfZ(\g g)$ by Step 1. 
\end{proof}

Let $\left\{\bfe^*_i\ :\ i\in\mathcal{I}_{m,n}\right\}$ be the basis of $V^*$
dual to the basis $\left\{\bfe_i\ :\ i\in\mathcal{I}_{m,n}\right\}$. Set
\begin{equation}  \label{eq-dfxijyij}
x_{i,j}:=\frac{1}{2}\left(\bfe_i\otimes \bfe_j+(-1)^{|i|\cdot |j|} \bfe%
_j\otimes \bfe_i\right) \,\text{ and }\ y_{i,j}:= \bfe^*_j\otimes \bfe^*_i+
(-1)^{|i|\cdot|j|} \bfe^*_i\otimes\bfe^*_j,
\end{equation}
for $i,j\in\mathcal{I}_{m,n}$. The $x_{i,j}$ span $W$, and therefore they
generate $\sS(W)$. Similarly, the $y_{i,j}$ span $W^*$, and therefore they
generate $\sP(W)$. Moreover, 
\begin{equation*}
x_{i,j}=(-1)^{|i|\cdot |j|}x_{j,i},\ y_{i,j}=(-1)^{|i|\cdot |j|}y_{j,i},\ 
\text{and } \lag y_{i,j},x_{p,q}\rag=\delta_{i,p}\delta_{j,q}
+(-1)^{|i|\cdot |j|}\delta_{i,q}\delta_{j,p}. 
\end{equation*}

The action of $\g g$ on $\sS(W)$ can be realized 
by the polarization operators 
\begin{equation}  \label{polrhoEi-j}
\rho(E_{i,j}):=\sum_{r\in\mathcal{I}_{m,n}}x_{i,r}\sfD_{{j,r}}
\ \text{ for }i,j\in\mathcal I_{m,n},
\end{equation}
where $\sfD_{{i,j}}:\sS(W)\to \sS(W)$ is the superderivation of parity $%
|i|+|j|$ uniquely defined by 
\begin{equation}  \label{pari-j}
\sfD_{i,j}(x_{k,l}):=\delta_{i,k}\delta_{j,l}+(-1)^{|i|\cdot|j|}\delta_{i,l}%
\delta_{j,k} \text{ for }i,j,k,l\in\mathcal{I}_{m,n}.
\end{equation}
Similarly, the action of $\g g$ on $\sP(W)$ is realized by the polarization
operators 
\begin{equation}  \label{ppooll}
\check\rho(E_{i,j})=-(-1)^{|i|\cdot|j|} \sum_{r\in\mathcal{I}_{m,n}}
(-1)^{|r|}y_{r,j}\partial_{{r,i}}\ \text{ for }
i,j\in\mathcal I_{m,n},
\end{equation}
where $\partial_{{i,j}}:=\partial_{x_{i,j}}$ is the superderivation of $%
\sP(W)$ corresponding to $x_{i,j}\in W$, as in
\eqref{pww*}.  

Let $U=U_\eev\oplus U_\ood$ be a $\Ztwo$-graded vector space. For every $\cA%
\subseteq\mathrm{End}_\C(U)$, we set 
\begin{equation*}
\cA^{\prime }:= \{B\in\mathrm{End}_\C(U)\ :\ [A,B]=0 \,\text{ for every }A\in%
\cA
\}. 
\end{equation*}
\begin{rmk}
\label{rmk-dblcomm}
Fix finite dimensional $\Ztwo$-graded vector spaces $U$ and $U'$, and set 
\[
\cA:=\mathrm{End}_\C(U)\otimes 1_{U'}\subset
\mathrm{End}_\C(U\otimes U').
\]
Then $\cA'=1_U\otimes \mathrm{End}_\C(U')$.
\end{rmk}
\begin{lem}
\label{UAcomm}
Let $U=U_\eev\oplus U_\ood$ be a finite dimensional $\Ztwo$-graded vector space, and let $\cA\subseteq\mathrm{End}_\C(U)_\eev$ 
be a semisimple associative algebra. Then $(\cA')'=\cA$.
\end{lem}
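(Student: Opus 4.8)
The plan is to reduce the super double commutant statement to the classical (ungraded) double commutant theorem for semisimple algebras, using the grading involution of $U$. First I would record a bookkeeping fact: since every element of $\cA$ is even, for a homogeneous $a\in\cA$ and an arbitrary $B=B_\eev+B_\ood\in\mathrm{End}_\C(U)$ the superbracket $[a,B]$ coincides with the ordinary commutator $aB-Ba$, and comparing $\eev$- and $\ood$-components shows that $[a,B]=0$ for all $a\in\cA$ if and only if $aB_\eev=B_\eev a$ and $aB_\ood=B_\ood a$ for all $a\in\cA$. In other words $\cA'$ coincides with the ordinary commutant $\cA^c:=\{B\in\mathrm{End}_\C(U):aB=Ba\text{ for all }a\in\cA\}$, and $\cA^c$ is automatically a $\Ztwo$-graded subalgebra of $\mathrm{End}_\C(U)$.

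The inclusion $\cA\sseq(\cA')'$ is immediate, since for $a\in\cA$ and homogeneous $B\in\cA'=\cA^c$ one has $[a,B]=aB-Ba=0$. For the reverse inclusion I would take $C=C_\eev+C_\ood\in(\cA')'=(\cA^c)'$, write out $[B,C]=0$ for homogeneous $B\in\cA^c$, and again compare parities. This gives that $C_\eev$ commutes, in the ordinary sense, with every element of $\cA^c$, so $C_\eev\in(\cA^c)^c$, while $C_\ood$ satisfies $BC_\ood=(-1)^{|B|}C_\ood B$ for every homogeneous $B\in\cA^c$.

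At this point two inputs finish the argument. On the even side, since $\cA$ is a semisimple subalgebra of the finite dimensional algebra $\mathrm{End}_\C(U)$ with $1_U\in\cA$, the classical double commutant theorem over the algebraically closed field $\C$ (see, e.g., \cite{GoodmanWallach}) yields $(\cA^c)^c=\cA$, hence $C_\eev\in\cA$. On the odd side, let $\epsilon\in\mathrm{End}_\C(U)_\eev$ be the grading involution, defined by $\epsilon|_{U_\eev}=1_{U_\eev}$ and $\epsilon|_{U_\ood}=-1_{U_\ood}$. Being even, $\epsilon$ commutes with all of $\cA$, so $\epsilon\in\cA^c$; applying the relation $BC_\ood=(-1)^{|B|}C_\ood B$ with $B=\epsilon$ gives $\epsilon C_\ood=C_\ood\epsilon$, whereas the oddness of $C_\ood$ forces $\epsilon C_\ood=-C_\ood\epsilon$. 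Thus $C_\ood\epsilon=0$, and since $\epsilon$ is invertible, $C_\ood=0$. Therefore $C=C_\eev\in\cA$, which together with $\cA\sseq(\cA')'$ proves $(\cA')'=\cA$.

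I do not expect a serious obstacle: the whole point is the elementary observation that the grading involution $\epsilon$ lies in $\cA^c$ (because $\cA$ is purely even), which simultaneously pins down the even part of $C$ via the ungraded theorem and annihilates its odd part. The only hypotheses actually used are that $\cA$ is semisimple, that $1_U\in\cA$, and that $U$ is finite dimensional.
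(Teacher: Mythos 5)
Your proof is correct, but it takes a genuinely different route from the paper's. The paper argues structurally: it decomposes $U$ into $\cA$-isotypic components $U\cong\bigoplus_\tau U_\tau\otimes V_\tau$ (with the $U_\tau$ purely even and the multiplicity spaces $V_\tau$ graded), invokes Artin--Wedderburn to write $\cA=\bigoplus_\tau\mathrm{End}_\C(U_\tau)\otimes 1_{V_\tau}$, and then computes $\cA'$ and $(\cA')'$ explicitly via Remark \ref{rmk-dblcomm}, obtaining along the way the description $\cA'=\bigoplus_\tau 1_{U_\tau}\otimes\mathrm{End}_\C(V_\tau)$. You instead reduce to the ungraded bicommutant theorem: since $\cA$ is purely even, the super-commutant $\cA'$ coincides with the ordinary commutant $\cA^c$ and is a graded subalgebra; the classical double commutant theorem handles the even part of $(\cA')'$; and the observation that the grading involution $\epsilon$ lies in $\cA^c$ kills the odd part. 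Your parity bookkeeping (splitting $[B,C]=0$ into $BC_\eev=C_\eev B$ and $BC_\ood=(-1)^{|B|}C_\ood B$ for homogeneous $B$) and the computation $\epsilon C_\ood=-C_\ood\epsilon$ are both correct. Your approach is more modular and isolates cleanly why $(\cA')'$ is purely even, while the paper's yields the explicit form of $\cA'$ as a byproduct (information that is not in fact needed downstream). Both arguments rest on the unstated hypothesis that $\cA$ contains $1_U$: the paper uses it tacitly when asserting that $U$ decomposes into irreducible $\cA$-modules, and without it the statement fails (e.g.\ for $\cA=\C e$ with $e$ a proper even projection), so you are right to flag it explicitly; it holds in the one place the lemma is applied.
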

\begin{proof}
Since $\cA$ is semisimple and purely even, 
both $U_\eev$ and $U_\ood$ can be expressed as direct sums of irreducible $\cA$-modules. It follows that
$U\cong\bigoplus_{\tau} 
U_\tau\otimes V_\tau,
$ 
where the $U_\tau$ are mutually non-isomorphic 
irreducible $\cA$-modules and  
$V_\tau\cong
\Hom_{\cA}(U_\tau,U)$. Since
the decomposition of $U$ into irreducibles is homogeneous,  
the multiplicity spaces $V_\tau$ are 
$\Ztwo$-graded, whereas the $U_\tau$ are purely even. 
By Artin-Wedderburn theory \cite{Jacobson},
$\cA=\bigoplus_{\tau}
\mathrm{End}_\C(U_\tau)\otimes 1_{V_\tau}^{}$.
Thus by Remark \ref{rmk-dblcomm},
$
\cA'=\bigoplus_{\tau}1_{U_\tau}^{}\otimes \mathrm{End}_\C(V_\tau)
$ and
$
(\cA')'=
\bigoplus_{\tau}
\mathrm{End}_\C(U_\tau)\otimes 1_{V_\tau}^{}=\cA
$.
\end{proof}

\begin{lem}
\label{lem-supcmmt}
For every $d\geq 1$, the superalgebra $\mathrm{End}_\g g(V^{\otimes d})$ 
 is spanned by the operators 
 $T^\sigma_{V,d}$  defined in 
 \eqref{dfTsigm}.
In particular, 
$
\mathrm{End}_\g g(V^{\otimes d})
=
\mathrm{End}_\g g(V^{\otimes d})_\eev
=\Mor_{\g g\text{-\upshape mod}}(V^{\otimes d},
V^{\otimes d}
)
$.
\end{lem}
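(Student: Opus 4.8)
The plan is to establish the Schur–Weyl-type statement $\mathrm{End}_{\g g}(V^{\otimes d}) = \spn_\C\{T^\sigma_{V,d} : \sigma \in S_d\}$ by the classical double commutant argument, adapted to the super setting. First I would identify the relevant algebras inside $\mathrm{End}_\C(V^{\otimes d})$: let $\cB \subseteq \mathrm{End}_\C(V^{\otimes d})$ be the (unital) subalgebra generated by the diagonal action of $\g g$, equivalently the image of $\bfU(\g g)$, and let $\cS := \spn_\C\{T^\sigma_{V,d}\}$ be the image of the group algebra $\C[S_d]$ under the representation \eqref{dfTsigm}. The two key facts to assemble are: (i) $\cS \subseteq \cB'$, i.e. the permutation operators commute with the $\g g$-action; and (ii) $\cB = \cS'$, which combined with the double commutant lemma gives $\cB' = (\cS')' = \cS$ (once we know $\cS$ is semisimple and purely even, so that Lemma \ref{UAcomm} applies). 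Since by definition $\mathrm{End}_{\g g}(V^{\otimes d}) = \mathrm{End}_\C(V^{\otimes d})^{\g g}$ is exactly the supercommutant of the $\g g$-action, and one checks that commuting with all of $\g g$ in the graded sense is the same as lying in the ordinary centralizer $\cB'$ of the associative algebra $\cB$ generated by $\g g$ (the Casimir-type parity bookkeeping works out because $\g g$ acts by even operators on $V^{\otimes d}$), this yields the claim. The "in particular" clause is then immediate: $\cS$ consists of even operators, so $\mathrm{End}_{\g g}(V^{\otimes d}) = \cB' = \cS$ is purely even, and by the last displayed isomorphism in Section \ref{section1} it coincides with $\Mor_{\g g\text{-mod}}(V^{\otimes d}, V^{\otimes d})$.

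Concretely, the steps in order: (1) verify $\cS$ is a semisimple purely even subalgebra of $\mathrm{End}_\C(V^{\otimes d})_\eev$ — purely even is clear from \eqref{dfTsigm} since $T^\sigma_{V,d}$ preserves parity (the sign $(-1)^{\seps}$ is scalar), and semisimplicity follows because $\cS$ is a quotient of $\C[S_d]$, which is semisimple by Maschke; (2) check $\cS \subseteq \cB'$, i.e. each $T^\sigma_{V,d}$ commutes with $\rho^{\otimes d}(x)$ for $x \in \g g$ — this is the naturality of the symmetry in the symmetric monoidal category $\SVec$, or a direct computation with the explicit formula \eqref{dfTsigm}, where the Koszul signs from moving $x$ past the permuted tensor factors cancel against the signs in $\seps$; (3) prove $\cS' = \cB$ — this is the content of the super Schur–Weyl duality: the commutant of the symmetric group action on $V^{\otimes d}$ is generated by the diagonal $\gl(m|n)$-action. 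I would deduce (3) from the standard fact that $\cS' \subseteq \mathrm{End}_\C(V^{\otimes d})$ is spanned by the symmetrizations of $x^{\otimes d}$ for $x \in \gl(V)$ (a polarization/Vandermonde argument, valid over $\C$), which are exactly the degree-$d$ part of the image of $\sS(\g g) \to \mathrm{End}_\C(V^{\otimes d})$, hence lie in $\cB$; the reverse inclusion $\cB \subseteq \cS'$ is just step (2). (4) Apply Lemma \ref{UAcomm} to $\cA = \cS$: since $\cS$ is semisimple and purely even, $(\cS')' = \cS$, so $\cB' = (\cS')' = \cS$, giving $\mathrm{End}_{\g g}(V^{\otimes d}) = \cS$.

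The main obstacle I anticipate is step (3), the super Schur–Weyl duality $\cS' = \cB$, since this is the genuinely nontrivial input rather than formal nonsense. The clean way is: an element of $\mathrm{End}_\C(V)^{\otimes d}$ lies in $\cS'$ (equivalently is $S_d$-invariant under simultaneous conjugation, with appropriate signs) if and only if it lies in the image of the supersymmetrization map $\ssym^d$ applied to $\mathrm{End}_\C(V)^{\otimes d}$; and that image is spanned by $\ssym^d(\phi^{\otimes d})$ for $\phi \in \mathrm{End}_\C(V)$, by a polarization identity expressing any symmetrized tensor as a linear combination of $d$-th powers. Restricting $\phi$ to run over $\g g = \gl(V)$ (all of $\mathrm{End}_\C(V)$, up to the parity issue that $\gl(m|n)$ as a Lie superalgebra is $\mathrm{End}_\C(V)$ with its full grading), these $\ssym^d(\phi^{\otimes d})$ are precisely the leading symbols of the image of $\bfU(\g g)$, hence lie in $\cB$. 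One must be slightly careful that $\mathrm{End}_\C(V^{\otimes d})$ is not literally $\mathrm{End}_\C(V)^{\otimes d}$ as a superalgebra but rather its "super" version with Koszul signs in the multiplication; however, as a $\Ztwo$-graded vector space they agree, and the permutation action and the polarization argument only use the vector-space structure together with the $S_d$-action, so the classical argument transplants verbatim. Alternatively, if a citation is preferred, this is the super Schur–Weyl duality of Berele–Regev / Sergeev, and steps (1), (2), (4) reduce the lemma to it.
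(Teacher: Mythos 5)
Your proposal is correct and follows essentially the same route as the paper: identify the span of the $T^\sigma_{V,d}$ as a semisimple, purely even homomorphic image of $\C[S_d]$, invoke the super Schur--Weyl duality of Berele--Regev/Sergeev for $\cS'=\cB$, and conclude via the double-commutant Lemma \ref{UAcomm}. One minor correction: your parenthetical claim that ``$\g g$ acts by even operators on $V^{\otimes d}$'' is false (odd elements of $\gl(m|n)$ act by odd operators); the identification $\mathrm{End}_{\g g}(V^{\otimes d})=\cB'$ nevertheless holds because the primed commutant in this paper is the \emph{super}commutant, and supercommuting with the generators $\pi(\g g)$ propagates to the associative algebra they generate by the derivation property of the superbracket.
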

\begin{proof}
Set $\cA:=\mathrm{Span}_\C\{T_{V,d}^\sigma\ :\ \sigma\in S_d\}\subset\mathrm{End}_\C(V^{\otimes d})_\eev$. Since $\cA$ is a homomorphic image of the group algebra $\C[S_d]$, it is a semisimple associative algebra, and 
 Lemma \ref{UAcomm} implies that
$(\cA')'=\cA$.

Let $\cB$ be the associative subalgebra of 
$\mathrm{End}_\C(V^{\otimes d})$ generated by the image of $\bfU(\g g)$.
From \cite[Theorem 1]{sergeev} or \cite[Theorem 4.14]{BereleRegev} it follows that
$\cA'=\cB$. Therefore we obtain that $\mathrm{End}_\g g(V^{\otimes d})=\cB'=(\cA')'=\cA$.
\end{proof}
The canonical isomorphism 
\begin{equation}  \label{V2dV*2dE}
V^{\otimes 2d}\otimes V^{*\otimes 2d}\cong \mathrm{End}_\C(V^{\otimes 2d})
\end{equation}
given in \eqref{U'U*} maps $v_1\otimes\cdots\otimes v_{2d}\otimes
v_1^*\otimes \cdots\otimes v_{2d}^*\in V^{\otimes 2d}\otimes V^{*\otimes 2d} 
$ to the linear map 
\begin{equation*}
V^{\otimes 2d} \to V^{\otimes 2d}\ ,\ w_1\otimes \cdots\otimes w_{2d}
\mapsto \lag v^*_{2d},w_1\rag
\cdots \lag v_1^*,w_{2d}\rag
v_1\otimes\cdots\otimes v_{2d} .
\end{equation*}
For every $\sigma\in S_{2d}$, let $\widetilde\bft_\sigma\in V^{\otimes
2d}\otimes V^{*\otimes 2d}$ be the element corresponding to $
T^{\sigma^{-1}}_{V,2d}$ via the isomorphism \eqref{V2dV*2dE}, where
$
T^{\sigma^{-1}}_{V,2d}
$ 
is defined as in \eqref{dfTsigm}. 
It is easy to
verify that 
\begin{equation*}
\widetilde\bft_\sigma=\sum_{i_1,\ldots,i_d\in\mathcal{I}_{m,n}} (-1)_{}^{ \seps%
\left(\sigma;\bfe_{i_1},\ldots,\bfe_{i_{2d}}^{}\right) } \bfe_{i_{\sigma(1)}} \otimes
\cdots \otimes \bfe_{i_{\sigma(2d)}} \otimes \bfe_{i_{2d}}^{*}\otimes
\cdots\otimes \bfe_{i_1}^{*}, 
\end{equation*}
where $
\seps\left(\sigma;\bfe_{i_1},\ldots,\bfe_{i_{2d}}^{}\right)
$ is defined in \eqref{sepssign}. Now 
set $\sigma_\circ:=(1,2)\cdots (2d-1,2d)\in S_{2d}$ and \[
H_{2d}:=\{\sigma\in
S_{2d}\,:\,\sigma\sigma_\circ=
\sigma_\circ\sigma\}.
\] 
Let $\psf_d^{}:V^{\otimes 2d}\to
V^{\otimes 2d}$ and 
$\psf_d^{*}:V^{*\otimes 2d}\to
V^{*\otimes 2d}$
be the canonical projections
onto $\sS^d(W)$ and $\sS^d(W^*)\cong\sP^d(W)$, so that
\begin{equation}
\label{dfpsfdd*}
\psf_d^{}=\frac{1}{2^d d!}\sum_{\sigma\in H_{2d}}T_{V,2d}^{\sigma}
\ \text{ and }\ 
\psf_d^*=\frac{1}{2^d d!}\sum_{\sigma\in H_{2d}}T_{V^*,2d}^{\sigma}.
\end{equation}
For every $\sigma\in S_{2d}$, 
let 
$\bft_\sigma\in \sP^d(W)\otimes \sS^d(W)$ be defined by
\begin{equation*}
\bft_\sigma:= 
\left(
\braid^{}_{V^{\otimes
2d},V^{*\otimes 2d}}
\circ
(\psf^{}_d\otimes \psf^{*}_d) \right) \big(\widetilde\bft_\sigma\big),
\end{equation*}
where 
$\braid^{}_{V^{\otimes
2d},V^{*\otimes 2d}}$ is the symmetry isomorphism, defined in 
\eqref{symisomdf}.
The action of the linear transformation $T_{V^{},2d}^{\sigma_1^{}}\otimes
T_{V^*,2d}^{\sigma_2^{}}$ on 
$V^{\otimes 2d}\otimes V^{*\otimes 2d}\cong\mathrm{End}_\C(V^{\otimes 2d})$ is given by 
\[
T\mapsto T_{V,2d}^{\sigma_1^{}}T
T_{V,2d}^{\pi\sigma_2^{-1}\pi^{-1}}
\text{ for every }T\in
\mathrm{End}_\C(V^{\otimes 2d}),\]
where $\pi\in S_{2d}$ is defined by 
$\pi(i)=2d+1-i$ for every $1\leq i\leq 2d$. Thus from
\eqref{dfpsfdd*} it follows that 
\begin{align}  
\label{tsigmeqqqqn}
\bft_\sigma&= \left(
\frac{1}{d!2^d}\right)^2 \sum_{\sigma^{\prime },\sigma^{\prime \prime
}\in H_{2d}} 
\bft_{\sigma^{\prime }\sigma\sigma^{\prime \prime }}
\\
&= \frac{1}{2^d}\sum_{i_1,\ldots,i_{2d}\in\mathcal{I}_{m,n}} \!\!\!\!\!\!(-1)^{|{i_1}|+\cdots+|{i_{2d}
}|
+
\seps
\left(
\sigma;\bfe_{i_1}^{},\ldots,\bfe_{i_{2d}}^{}\right)} y_{i_{2d-1},i_{2d}}^{}
\cdots y_{i_{1},i_{2}}^{} \otimes x_{i_{\sigma(1)},i_{\sigma(2)}}^{} \cdots
x_{i_{\sigma(2d-1)},i_{\sigma(2d)}}^{}.  \notag
\end{align}

\begin{lem}
\label{lem2..5}
Let $\sigma\in S_{2d}$, $\sigma'\in S_{2d'}$, and $\sigma''\in S_{2d''}$ be such that $d'+d''=d$ and
\[
\sigma(r):=\begin{cases}
\sigma'(r)&\text{ if }1\leq r\leq 2d',\\
\sigma''(r-2d')+2d'&\text{ if }2d'+1\leq r\leq 2d.
\end{cases}
\] 
Then 
$\bft_\sigma=
\bft_{\sigma'}\bft_{\sigma''}$ as 
elements of the  superalgebra $\sP(W)\otimes \sS(W)$.
\end{lem}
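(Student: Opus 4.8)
The plan is to prove the identity $\bft_\sigma = \bft_{\sigma'}\bft_{\sigma''}$ by tracking both sides through the chain of isomorphisms that defines the tensors $\bft_\bullet$, reducing everything to a statement about the symmetric monoidal structure of $\SVec$. First I would recall that $\bft_\sigma$ is obtained from $T^{\sigma^{-1}}_{V,2d}\in\mathrm{End}_\C(V^{\otimes 2d})$ by (i) pulling back across the isomorphism \eqref{V2dV*2dE} to land in $V^{\otimes 2d}\otimes V^{*\otimes 2d}$, (ii) applying the projections $\psf_d\otimes\psf_d^*$, and (iii) applying the symmetry isomorphism $\braid_{V^{\otimes 2d},V^{*\otimes 2d}}$ to land in $\sP^d(W)\otimes\sS^d(W)$. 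The key structural observation is that the block form of $\sigma$ assumed in the statement means $\sigma=\sigma'\sqcup\sigma''$ acts separately on the first $2d'$ and last $2d''$ tensor factors, so that $T^{\sigma^{-1}}_{V,2d} = T^{\sigma'^{-1}}_{V,2d'}\otimes T^{\sigma''^{-1}}_{V,2d''}$ under the identification $V^{\otimes 2d}\cong V^{\otimes 2d'}\otimes V^{\otimes 2d''}$.

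The heart of the argument is then to show that each of the three steps above is compatible with this tensor factorization, after which the claim follows by composing the three compatibilities. For step (i), the isomorphism \eqref{V2dV*2dE}, being $T_{u'\otimes u^*}(u)=\lag u^*,u\rag u'$, is multiplicative in the evident sense: the endomorphism $A\otimes B$ of $U\otimes U'$ corresponds to the element of $(U\otimes U'){\otimes}(U\otimes U')^*$ built from the elements corresponding to $A$ and $B$, up to reordering tensor factors and Koszul signs. Here one must be careful about the reversal of the dual factors (the $\bfe^*_{i_{2d}}\otimes\cdots\otimes\bfe^*_{i_1}$ ordering) and about how the permutation $\pi(i)=2d+1-i$ interacts with the block decomposition — this is where signs must be checked. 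For step (ii), the projector $\psf_d$ onto $\sS^d(W)\subset V^{\otimes 2d}$ factors as $\psf_{d'}\otimes\psf_{d''}$ composed with the further symmetrization that mixes the two blocks; but since $\bft_{\sigma'}\bft_{\sigma''}$ is computed inside the \emph{superalgebra} $\sP(W)\otimes\sS(W)$, that extra symmetrization is exactly implemented by the commutative multiplication of $\sS(W)$ and $\sP(W)$. Concretely, one checks that the product in $\sP(W)\otimes\sS(W)$ of $y_{i_1,i_2}\cdots y_{i_{2d'-1},i_{2d'}}\otimes x_{\cdots}$ with $y_{i_{2d'+1},i_{2d'+2}}\cdots\otimes x_{\cdots}$ reproduces the $d$-fold expression in \eqref{tsigmeqqqqn}, with the signs $(-1)^{|i_1|+\cdots+|i_{2d}|+\seps(\sigma;\ldots)}$ combining correctly because $\seps(\sigma;\ldots)=\seps(\sigma';\ldots)+\seps(\sigma'';\ldots)$ when $\sigma$ is block-diagonal (no inversions cross between the two blocks). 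For step (iii), $\braid$ is a natural transformation in a symmetric monoidal category, so it is compatible with tensor products up to the coherence isomorphisms, which again match the multiplication in $\sP(W)\otimes\sS(W)$.

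In practice the cleanest route is probably to bypass the categorical bookkeeping and argue directly from the explicit formula \eqref{tsigmeqqqqn}: write down $\bft_{\sigma'}$ and $\bft_{\sigma''}$ using that formula with summation indices $i_1,\ldots,i_{2d'}$ and $i_{2d'+1},\ldots,i_{2d}$ respectively, multiply them in $\sP(W)\otimes\sS(W)$, and observe that since $\sigma$ sends $\{1,\ldots,2d'\}$ to itself and $\{2d'+1,\ldots,2d\}$ to itself, the product is precisely the single sum \eqref{tsigmeqqqqn} for $\bft_\sigma$. The only nontrivial point is the sign identity $\seps(\sigma;\bfe_{i_1},\ldots,\bfe_{i_{2d}}) = \seps(\sigma';\bfe_{i_1},\ldots,\bfe_{i_{2d'}}) + \seps(\sigma'';\bfe_{i_{2d'+1}},\ldots,\bfe_{i_{2d}})$, which holds because a pair $r<s$ with $\sigma(r)>\sigma(s)$ cannot have $r\le 2d'<s$; the prefactor $2^{-d}=2^{-d'}\cdot 2^{-d''}$ and the parity factor $(-1)^{|i_1|+\cdots}$ split analogously. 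The main obstacle will be the careful handling of Koszul signs coming from reordering the $y$-factors past the $x$-factors when one forms the product in $\sP(W)\otimes\sS(W)$ — one must verify that the sign introduced by $\braid$ in the definition of $\bft_\sigma$ is exactly the product of the two signs for $\bft_{\sigma'}$ and $\bft_{\sigma''}$ together with the Koszul sign of interleaving the two halves — but since all relevant degrees lie in $\sS^{d'}(W),\sS^{d''}(W)$ (which are even-parity subspaces of $\sP(W)\otimes\sS(W)$ as algebra generators $x_{i,j},y_{i,j}$ enter in pairs) these crossing signs are in fact trivial, and the identity reduces to the elementary combinatorial facts above.
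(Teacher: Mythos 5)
Your ``cleanest route'' is exactly the paper's proof: the paper disposes of this lemma in one line, asserting that it follows immediately from the explicit summation formula \eqref{tsigmeqqqqn}, and your direct computation (additivity of $\seps$ over the two blocks since no inversion of a block-diagonal $\sigma$ crosses $2d'$, together with multiplicativity of the prefactors $2^{-d}=2^{-d'}2^{-d''}$ and $(-1)^{|i_1|+\cdots+|i_{2d}|}$) is the intended verification. The categorical preamble about \eqref{V2dV*2dE}, $\psf_d\otimes\psf_d^*$ and $\braid$ is a legitimate alternative but is not needed.

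One point in your sign analysis is not right as stated, although the conclusion survives. The spaces $\sS^{d'}(W)$ and $\sP^{d'}(W)$ are \emph{not} purely even subspaces (the generators $x_{i,j}$, $y_{i,j}$ have parity $|i|+|j|$), so the crossing signs are not individually trivial; what each summand has is even \emph{total} parity, which does not by itself kill the Koszul signs. What actually happens is a cancellation of two equal signs. Write a summand of $\bft_{\sigma'}$ as $A'\otimes B'$ and a summand of $\bft_{\sigma''}$ as $A''\otimes B''$. The multiplication rule in the superalgebra $\sP(W)\otimes\sS(W)$ produces the Koszul sign $(-1)^{|B'|\cdot|A''|}$ with $|B'|=\sum_{s\leq 2d'}|i_s|$ and $|A''|=\sum_{s>2d'}|i_s|$. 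On the other hand, because the $y$-strings in \eqref{tsigmeqqqqn} are written in \emph{reversed} order, assembling $A'A''=y_{i_{2d'-1},i_{2d'}}\cdots y_{i_1,i_2}\cdot y_{i_{2d-1},i_{2d}}\cdots y_{i_{2d'+1},i_{2d'+2}}$ into the single reversed string $y_{i_{2d-1},i_{2d}}\cdots y_{i_1,i_2}$ requires commuting the second block past the first inside the supercommutative algebra $\sP(W)$, which costs exactly the same sign $(-1)^{(\sum_{s\leq 2d'}|i_s|)(\sum_{s>2d'}|i_s|)}$. The two cancel, the $x$-string is already in the correct order since $\sigma$ preserves the blocks, and the rest of the bookkeeping is as you describe.
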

\begin{proof}
Follows immediately from the explicit summation formula for $\bft_\sigma$ given in \eqref{tsigmeqqqqn}.
\end{proof}

\begin{prp}
\label{taud0dr}
Let $\sigma\in S_{2d}$. Then the following statements hold.
\begin{itemize}
\item[\upshape (i)] 
$\bft_\sigma=\bft_{\sigma'\sigma\sigma''}$
for every $\sigma',\sigma''\in H_{2d}$.

\item[\upshape (ii)] There exist $\sigma',\sigma''\in H_{2d}$
and integers  
$0=d_0<d_1< \cdots< d_{r-1}<d_r=d
$ such that 
\begin{equation}
\label{eqTAUdecc}
\sigma'\sigma\sigma''=\tau_{d_0,d_1}^{}
\tau_{d_1,d_2}^{}\cdots \tau_{d_{r-1},d_r^{}}^{},
\end{equation}
where  $\tau_{a,b}$ for $a<b$ denotes the cycle
$
(2a+2,2a+4,\ldots,2b-2,2b)
$.

\end{itemize}

\end{prp}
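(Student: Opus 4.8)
The plan is to understand both statements combinatorially via the subgroup $H_{2d}\cong S_d\ltimes(\Z/2)^d$, which is precisely the stabilizer of the perfect matching $\{\{1,2\},\{3,4\},\ldots,\{2d-1,2d\}\}$ under conjugation of $\sigma_\circ$. First I would prove (i) purely algebraically: from the double-sum expression in the first line of \eqref{tsigmeqqqqn}, namely $\bft_\sigma=(d!2^d)^{-2}\sum_{\sigma',\sigma''\in H_{2d}}\bft_{\sigma'\sigma\sigma''}$, one sees that the element $\bft_\sigma$ depends only on the double coset $H_{2d}\,\sigma\,H_{2d}$; indeed replacing $\sigma$ by $\rho'\sigma\rho''$ for fixed $\rho',\rho''\in H_{2d}$ merely reindexes the double sum. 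Equivalently, apply the same reasoning directly using $\psf_d$ and $\psf_d^*$: since $T^\rho_{V,2d}\psf_d=\psf_d$ and $\psf_d^* T^\rho_{V^*,2d}=\psf_d^*$ for $\rho\in H_{2d}$ (these are the projections onto $\sS^d(W)$ and $\sP^d(W)$, which $H_{2d}$ fixes), the expression $(\braid\circ(\psf_d\otimes\psf_d^*))(\widetilde\bft_{\sigma'\sigma\sigma''})$ is unchanged. This is the easy half.

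The substance is (ii): showing every double coset $H_{2d}\sigma H_{2d}$ contains a representative of the special form $\tau_{d_0,d_1}\tau_{d_1,d_2}\cdots\tau_{d_{r-1},d_r}$, a product of disjoint cycles each running through the even numbers of a consecutive block $\{2d_{i-1}+1,\ldots,2d_i\}$. The natural combinatorial model is this: to a permutation $\sigma\in S_{2d}$ one associates the pair of matchings $M_0=\{\{2k-1,2k\}\}_k$ (the ``$\sigma_\circ$-blocks'') and $M_1=\sigma^{-1}(M_0)=\{\{\sigma^{-1}(2k-1),\sigma^{-1}(2k)\}\}_k$. The union $M_0\cup M_1$ is a disjoint union of even cycles on $\{1,\ldots,2d\}$, and the left/right $H_{2d}$-actions act transitively on the ways of ordering/orienting these cycles and of permuting which block goes to which. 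So the double coset is determined by the multiset of cycle lengths of $M_0\cup M_1$, i.e.\ by a partition of $d$ (half the cycle length); and for each such partition one can choose the representative so that the cycles occupy consecutive blocks $\{2d_{i-1}+1,\dots,2d_i\}$, on which $\sigma$ restricts to exactly the cyclic shift $\tau_{d_{i-1},d_i}$. I would make this precise by: (a) checking $\tau_{a,b}\in H_{2b}$-coset yields the single-cycle case; (b) observing $\sigma$ maps the block decomposition of $\{2a+1,\dots,2b\}$ compatibly, so one may first use a right multiplication by an element of $H_{2d}$ to arrange that $\sigma$ permutes the $M_0$-blocks according to a product of cycles, then a left multiplication to arrange these are cycles on consecutive blocks in increasing order, and finally absorb the residual $(\Z/2)^d$ (swaps within blocks) and relabelling to land exactly on the stated normal form; (c) reading off the breakpoints $d_0<d_1<\cdots<d_r$ from the block-orbits.

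The main obstacle is bookkeeping in step (b): one must track simultaneously the two copies of $H_{2d}$ (acting by $\sigma\mapsto\sigma'\sigma$ on the $M_0$-labels and by $\sigma\mapsto\sigma\sigma''$ on the $M_1$-labels) and verify that after normalization $\sigma$ restricted to each consecutive block really is the cycle $(2a+2,2a+4,\ldots,2b)$ and not some other representative of the same cyclic type — in particular one should be careful that the within-block transpositions in $H_{2d}$ suffice to fix the parity/orientation so that the product form is exactly as written. An alternative, cleaner route is to invoke Lemma \ref{lem2..5} to reduce immediately to the indecomposable case (one cycle in $M_0\cup M_1$), prove (ii) there by an explicit induction on the block count of that single cycle, and then concatenate; this isolates all the delicate sign/orientation checking into the single-cycle base case and makes the consecutive-block structure automatic. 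I expect the write-up to follow this reduction: (i) by the double-sum invariance, then a one-cycle lemma, then Lemma \ref{lem2..5} to assemble (ii).
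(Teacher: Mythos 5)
Your proposal is correct and follows essentially the same route as the paper: part (i) is read off from the double sum in \eqref{tsigmeqqqqn} exactly as you say, and for part (ii) the paper's bipartite multigraph $\mathcal G$ (with edge sets $M_{s,s'}$) is precisely your union-of-matchings picture, with the normalization of your step (b) carried out by explicit elements $\sigma_1,\sigma_2,\sigma_4,\sigma_\circ\in H_{2d}$ obtained from a perfect matching of $\mathcal G$. One caution: your proposed shortcut via Lemma \ref{lem2..5} only yields a factorization of the tensor $\bft_\sigma$ and cannot by itself establish the purely group-theoretic statement (ii) --- putting $\sigma$ into block-diagonal form with indecomposable consecutive blocks is already the bulk of the normalization --- so the bookkeeping in your step (b) is unavoidable, and the paper's proof is exactly that bookkeeping made explicit.
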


\begin{proof}
(i) Follows from \eqref{tsigmeqqqqn}.

\noindent (ii) 
First we show that there exist integers $a_1,\ldots,a_d,b_1,\ldots,b_d$ and a permutation $\tau\in S_d$ such that 
\begin{equation*}
2s-1\leq a_s,b_s\leq 2s
\text{ and }
\sigma(a_s)=b_{\tau(s)}
\text{ for }
1\leq s\leq d.
\end{equation*}
To this end, 
we construct an undirected  bipartite multigraph 
$\mathcal G$, 
with vertex set \[
\mathcal V
:=\{\mathbf a_1,\ldots,\mathbf a_d\}\cup\{\mathbf b_1,\ldots,\mathbf b_d\},
\] and 
$m_{s,s'}:=\left|M_{s,s'}\right|$ edges between 
$\mathbf a_s$ and $\mathbf b_{s'}$, labelled by elements of the set $M_{s,s'}$, where
\[
M_{s,s'}:=\{\sigma(2s-1),\sigma(2s)\}\cap \{2s'-1,2s'\}\,
\text{ for all }1\leq s,s'\leq d
.
\] 
Elements of $M_{s,s'}$ correspond to equalities of the form $\sigma(a_s)=b_{s'}$, where
$2s-1\leq a_s\leq 2s$ and $2s'-1\leq b_{s'}\leq 2s'$.
Every vertex in $\mathcal G$ has degree two, and
therefore $\mathcal G$ is a union of disjoint cycles. It follows that $\mathcal G$  has a perfect matching,
i.e., 
 a set of $d$ edges  which do not have a vertex in common. 
Using this matching we define $\tau\in S_d$ so that the  edges of the matching are $(\mathbf{a}_s,\mathbf b_{\tau(s)})$. The label on each edge 
$(\mathbf{a}_s,\mathbf b_{\tau(s)})$ of the matching is the corresponding $b_{\tau(s)}$, and $a_s:=\sigma^{-1}\big(b_{\tau(s)}\big)$.

Now let $\sigma_1,\sigma_2\in H_{2d}$ be defined by
\[
\sigma_1(t):=\begin{cases}
2s-1& \text{ if }t=2\tau(s)-1\text{ for }1\leq s\leq d,\\
2s& \text{ if }t=2\tau(s)\text{ for }1\leq s\leq d.
\end{cases}
\ \ \text{ and }\ \
\sigma_2:=\prod_
{\substack{1\leq s\leq d\\[.3mm]
a_s-b_{\tau(s)}\not\in 2\Z}}\!\!\!
(2s-1,2s),\]
and 
set $\sigma_3:=\sigma_2\sigma_1\sigma$. It is easy to check that that $\sigma_3(a_s)=a_s$ for $1\leq s\leq d$.
Next set 
\[
\sigma_4:=\prod_{\substack{1\leq s\leq d\\ a_s=2s}}(2s-1,2s)\text{\ \,and\,\ }\sigma_5:=\sigma_4\sigma_3\sigma_4^{-1},
\]
so that $\sigma_5(2s-1)=2s-1$ for $1\leq s\leq d$. Now let $\tau\in S_d$ be defined by $\tau(s):=\frac{1}{2}\sigma_5(2s)$ for $1\leq s\leq d$. Choose $0=d_0<d_1<\cdots<d_r=d$ 
and $\tau_\circ\in S_d$ such that
\[
\tau_\circ^{}\tau\tau_\circ^{-1}=
\prod_{s=1}^d(d_{s-1}+1,\ldots,d_s).
\]
Finally, let $\sigma_\circ\in H_{2d}$ be defined by
\[
\sigma_\circ(2s):=
2\tau_\circ(s)
\,\text{ and }\,
\sigma_\circ(2s-1):=
2\tau_\circ(s)-1
\,\text{ for }\,1\leq s\leq d
.\]
It is straightforward to check that $\sigma_\circ^{}\sigma_5\sigma_\circ^{-1}
=\tau_{d_0,d_1}^{}\cdots \tau_{d_{r-1},d_r^{}}^{}$,
so that \eqref{eqTAUdecc} holds for $\sigma':=\sigma_\circ^{}\sigma_4^{}\sigma_2^{}\sigma_1^{}$ and $\sigma'':=\sigma_4^{-1}\sigma_\circ^{-1}$.
\end{proof}

\begin{lem}
\label{prp-g-inv-sPWW}
For every $d\geq 1$, 
\begin{equation}
\label{sSdttsPd}
\left(
\sP^d(W)\otimes \sS^d(W)\right)^\g g 
=\mathrm{Span}_\C\left\{\bft_\sigma\,:\,\sigma\in S_{2d}\right\}
.
\end{equation}
Furthermore,
$\left(\sP(W)\otimes \sS(W)\right)^\g g
=\bigoplus_{d\geq 0}\left(
\sP^d(W)\otimes \sS^d(W)\right)^\g g$.
\end{lem}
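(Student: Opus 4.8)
The plan is to identify $\sP^d(W)\otimes\sS^d(W)$ as a subquotient of $\mathrm{End}_\C(V^{\otimes 2d})$ and then transport the description of $\g g$-invariants from Lemma \ref{lem-supcmmt} through this identification. First I would recall that $\sS^d(W)=\psf_d(V^{\otimes 2d})$ and $\sP^d(W)\cong\sS^d(W^*)=\psf_d^*(V^{*\otimes 2d})$, so that the isomorphism \eqref{V2dV*2dE} together with the symmetry isomorphism $\braid$ restricts to a $\g g$-module isomorphism between $\sP^d(W)\otimes\sS^d(W)$ and the image of the idempotent $E\mapsto \psf_d\, E\, \psf_d$ acting on $\mathrm{End}_\C(V^{\otimes 2d})$; concretely, the element $\widetilde\bft_\sigma$ corresponds to $T^{\sigma^{-1}}_{V,2d}$ and $\bft_\sigma$ corresponds to $\psf_d T^{\sigma^{-1}}_{V,2d}\psf_d$. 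Since $\psf_d$ and $\psf_d^*$ are $\g g$-equivariant projections, taking $\g g$-invariants commutes with this restriction.

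Next I would invoke Lemma \ref{lem-supcmmt}: the space $\mathrm{End}_\g g(V^{\otimes 2d})$ is spanned by $\{T^\tau_{V,2d}:\tau\in S_{2d}\}$, and it is purely even. Because $\psf_d$ is itself $\g g$-invariant, the compression map $E\mapsto\psf_d E\psf_d$ sends $\g g$-invariant endomorphisms to $\g g$-invariant endomorphisms, and it is surjective onto $\bigl(\psf_d\,\mathrm{End}_\C(V^{\otimes 2d})\,\psf_d\bigr)^\g g$ (given any invariant $E$ in the compressed algebra, $E=\psf_d E\psf_d$ already). Hence $\bigl(\sP^d(W)\otimes\sS^d(W)\bigr)^\g g$ is spanned by the images $\psf_d T^{\tau}_{V,2d}\psf_d$ for $\tau\in S_{2d}$, and unwinding the correspondence (with $\sigma=\tau^{-1}$, and using part (i) of Proposition \ref{taud0dr} to absorb the extra $\psf_d$-factors into $\bft_\sigma$) these are exactly the $\bft_\sigma$. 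This gives the inclusion $\supseteq$ trivially — each $\bft_\sigma$ is manifestly $\g g$-invariant because $T^\sigma_{V,2d}\in\mathrm{End}_\g g(V^{\otimes 2d})$ and the projections are equivariant — and the inclusion $\subseteq$ from the surjectivity just described. I would also note the elementary point that $\psf_d T^\tau_{V,2d}\psf_d$ lies in the image of $\psf_d\otimes\psf_d^*$ after applying $\braid$, so the correspondence is genuinely onto the compressed space and not just into it.

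For the final sentence, the decomposition $\bigl(\sP(W)\otimes\sS(W)\bigr)^\g g=\bigoplus_{d\ge0}\bigl(\sP^d(W)\otimes\sS^d(W)\bigr)^\g g$ follows because the $\Z$-grading on $\sP(W)\otimes\sS(W)$ by $(\deg,\deg)$ is by $\g g$-submodules — the $\g g$-action preserves polynomial degree in each tensor factor separately — together with the observation that only the ``balanced'' bidegrees $(d,d)$ can contribute invariants: a nonzero invariant in $\sP^a(W)\otimes\sS^b(W)$ forces, e.g. by looking at the central element $1_V\in\g g$ which acts as the scalar $2a$ on $\sP^a(W)$ and $-2b$ on $\sS^b(W)$ (so as $2a-2b$ on the tensor product), that $a=b$. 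So invariants live only in bidegrees $(d,d)$, and the direct sum decomposition is immediate from the grading.

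The main obstacle I anticipate is the bookkeeping in the first paragraph: verifying precisely that the compression $E\mapsto\psf_d E\psf_d$ on $\mathrm{End}_\C(V^{\otimes 2d})$ corresponds, under \eqref{V2dV*2dE} and $\braid$, to the map $\widetilde\bft_\sigma\mapsto\bft_\sigma$ defined via $\psf_d\otimes\psf_d^*$, including getting all the signs from the symmetry isomorphism right and correctly handling that one must conjugate $\sigma_2$ by the order-reversing permutation $\pi$ on the $V^*$-factors (exactly the phenomenon recorded in the computation preceding \eqref{tsigmeqqqqn}). Everything else is formal once this identification is in place.
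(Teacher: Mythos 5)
Your proposal is correct and follows essentially the same route as the paper: both arguments reduce \eqref{sSdttsPd} to Lemma \ref{lem-supcmmt} via the identification $V^{*\otimes 2d}\otimes V^{\otimes 2d}\cong\mathrm{End}_\C(V^{\otimes 2d})$, and both obtain surjectivity onto invariants from the fact that the equivariant projection $\psf_d^*\otimes\psf_d$ (your compression $E\mapsto\psf_d E\psf_d$) restricts to the identity on the target subspace; the final graded statement is handled in both by the action of the centre of $\g g$ (note only that $1_V$ acts by $-2a$ on $\sP^a(W)$ and $+2b$ on $\sS^b(W)$, the opposite signs to what you wrote, which does not affect the conclusion $a=b$).
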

\begin{proof}
For every $k,l\geq 0$, the action of $\g g$ on 
$\sP(W)\otimes\sS(W)$ 
leaves $\sP^k(W)\otimes \sS^l(W)$ invariant. It follows that $\left(\sP(W)\otimes \sS(W)\right)^\g g=\bigoplus_{k,l\geq 0}\left(\sP^k(W)\otimes \sS^l(W)\right)^\g g$. By considering
the action of the centre of $\g g$ we obtain
that \begin{equation}
\label{sSksSlk=l}
\left(\sP^k(W)\otimes \sS^l(W)\right)^\g g=\{0\} \text{ unless }k=l.
\end{equation}  
Next we prove \eqref{sSdttsPd}. Recall the definition of
$\psf_d^{}$ and $\psf_d^*$ from \eqref{dfpsfdd*}.
It suffices to prove that the canonical projection
\begin{equation}
\label{eq-split}
V^{*\otimes 2d}\otimes V^{\otimes 2d}
\xrightarrow{\psf^*_d\otimes\psf_d^{}}
\sS^d(W^*)\otimes \sS^d(W)
\end{equation}
is surjective on $\g g$-invariants. This follows from
$
\sS^d(W^*)\otimes \sS^d(W)=
\left(
V^{*\otimes 2d}\otimes V^{\otimes 2d}
\right)^{H_{2d}\times H_{2d}}
$
and the fact that $\psf_d^*\otimes \psf_d^{}$ restricts to the identity map on $\sS^d(W^*)\otimes \sS^d(W)$.
\end{proof}

For every $i,j\in\mathcal{I}_{m,n}$, let $\varphi_{i,j} \in \sP(W)\otimes\sS
(W)$ be defined by 
\begin{equation*}
\varphi_{i,j}:=(-1)^{|i|\cdot|j|} \sum_{r\in\mathcal{I}%
_{m,n}}(-1)^{|r|}y_{r,j}\otimes x_{r,i}. 
\end{equation*}
Note that 
\begin{equation}  \label{sfmPHIEIJ}
\sfm(\varphi_{i,j})=-\check\rho(E_{i,j}) .
\end{equation}

\begin{lem}
\label{lem2..8}
Fix $d\geq 1$ and let $\sigma:=(2,4,\ldots,2d)\in S_{2d}$, that is, $\sigma(2r)=2r+2$ for $1\leq r\leq d-1$, $\sigma(2d)=2$, and $\sigma(2r-1)=2r-1$ for $1\leq r\leq d$. Then
\[
\bft_{\sigma}
=(-2)^d\,\widehat\sfs_d\big(
\check\rho\big(\mathrm{str}\big(\mathbf E^d\big)\big)
\big),
\]
where $\mathbf E$ is the matrix defined in Lemma \ref{gelfzhel}.
\end{lem}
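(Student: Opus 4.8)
The plan is to compute both sides explicitly and match coefficients, using the summation formula \eqref{tsigmeqqqqn} for $\bft_\sigma$ on the left and the polarization realization \eqref{ppooll} of $\check\rho$ on the right. First I would expand $\mathrm{str}(\mathbf E^d)$ in terms of the generators $E_{i,j}$. From $\mathbf E_{i,j}=(-1)^{|i|\cdot|j|}E_{j,i}$ one gets
\[
\mathrm{str}\big(\mathbf E^d\big)=\sum_{i_1,\ldots,i_d\in\mathcal I_{m,n}}(-1)^{|i_1|}\mathbf E_{i_1,i_2}\mathbf E_{i_2,i_3}\cdots\mathbf E_{i_d,i_1},
\]
which after substituting the definition of the $\mathbf E_{i,j}$ becomes a signed sum of products $E_{i_2,i_1}E_{i_3,i_2}\cdots E_{i_1,i_d}$. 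Applying $\check\rho$, which is an algebra homomorphism, turns each such monomial into a product of the first-order operators $\check\rho(E_{p,q})=\sfm(\varphi_{p,q})$ via \eqref{sfmPHIEIJ}, and since the symbol is multiplicative by \eqref{whsD1Dk}, the top symbol of $\check\rho(\mathrm{str}(\mathbf E^d))$ is a product of the $\varphi$'s: $\widehat\sfs_d(\check\rho(\mathrm{str}(\mathbf E^d)))=\pm\sum(\text{signs})\,\varphi_{i_2,i_1}\varphi_{i_3,i_2}\cdots\varphi_{i_1,i_d}$ computed in the commutative superalgebra $\sP(W)\otimes\sS(W)$.

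Next I would expand that product of $\varphi$'s using $\varphi_{p,q}=(-1)^{|p|\cdot|q|}\sum_{r}(-1)^{|r|}y_{r,q}\otimes x_{r,p}$: each factor contributes an extra summation index, so after relabelling we obtain a sum over $2d$ indices in $\mathcal I_{m,n}$ of a sign times a product of $d$ factors $y_{\bullet,\bullet}$ tensored with a product of $d$ factors $x_{\bullet,\bullet}$. The combinatorial heart of the argument is to check that, after this relabelling, the pattern of how the indices are shared between the $y$'s and the $x$'s is exactly the pattern dictated by the permutation $\sigma=(2,4,\ldots,2d)$ in formula \eqref{tsigmeqqqqn}: namely, with the index convention of \eqref{tsigmeqqqqn}, the $y$'s pair up consecutive indices $(i_{2r-1},i_{2r})$ while the $x$'s pair up $(i_{\sigma(2r-1)},i_{\sigma(2r)})=(i_{2r-1},i_{2r+2})$, which is precisely the cyclic "shift by one block" coming from the trace in $\mathrm{str}(\mathbf E^d)$. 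So the two index bookkeepings agree.

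The main obstacle I anticipate is the sign comparison. On the right-hand side there are several competing sources of signs: the $(-1)^{|i|\cdot|j|}$ in the definition of $\mathbf E_{i,j}$ (appearing $d$ times, once per factor), the $(-1)^{|i|}$ from the supertrace, the $(-1)^{|p|\cdot|q|}$ and $(-1)^{|r|}$ in each $\varphi_{p,q}$, and the Koszul signs incurred when multiplying the $\varphi$'s in the supercommutative algebra $\sP(W)\otimes\sS(W)$ (i.e. when moving the $x$-parts past the $y$-parts to collect all $y$'s on the left and all $x$'s on the right). On the left-hand side the only sign is $(-1)^{|i_1|+\cdots+|i_{2d}|+\seps(\sigma;\bfe_{i_1},\ldots,\bfe_{i_{2d}})}$ together with the overall $2^{-d}$. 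I would verify that all the parity-squared factors like $(-1)^{|i|\cdot|j|}$ telescope around the cycle and collapse into the pure-parity factor $(-1)^{|i_1|+\cdots+|i_{2d}|}$, and that the accumulated Koszul reordering sign is exactly $(-1)^{\seps(\sigma;\ldots)}$ for this particular $\sigma$; the numerical prefactor $(-2)^d$ then comes from the $2^{-d}$ in \eqref{tsigmeqqqqn} against a $(-1)^d$ of sign and the fact that $\sfD_{i,j}(x_{k,l})$ (equivalently $\langle y_{i,j},x_{p,q}\rangle$) carries a factor $2$ on the "diagonal" $i=j$ — this is where the power of $2$ in $\check\rho$ versus the normalization of $\psf_d$ enters. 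Because $\sigma$ is a single cycle on the even letters fixing all odd letters, $\seps(\sigma;\ldots)$ has a clean closed form, so this sign check, while delicate, is a finite bookkeeping exercise rather than a conceptual difficulty. One should also note the formula is consistent with Lemma \ref{lem2..5} under concatenation only in the appropriate sense; I would not rely on that but rather do the direct computation.
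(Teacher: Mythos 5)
Your proposal is correct and follows essentially the same route as the paper's proof: both sides are reduced to the same sum of products $\varphi_{t_{s+1},t_s}$ in $\sP(W)\otimes\sS(W)$ by expanding $\mathrm{str}(\mathbf E^d)$ into monomials $E_{t_2,t_1}\cdots E_{t_1,t_d}$, applying \eqref{sfmPHIEIJ} and the multiplicativity of the symbol \eqref{whsD1Dk}, and matching against the explicit formula \eqref{tsigmeqqqqn}, with the sign and $2$-power bookkeeping deferred exactly as the paper defers its "straightforward but tedious sign calculation". You correctly identify all the sources of signs and of the factor $(\pm2)^{\pm d}$, so nothing essential is missing.
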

\begin{proof}
For  $i_1,\ldots,i_{2d}\in\mathcal I_{m,n}$, we 
set 
\[
\seps'(i_1,\ldots,i_{2d}):={\sum_{s=1}^{2d}|i_s|+\sum_{s=3}^{2d}
|i_2|\cdot|i_s|+\sum_{s=1}^{d-1}|i_{2s+1}|\cdot|i_{2s+2}|}
.
\]
By a straightforward but tedious sign calculation, we obtain from \eqref{tsigmeqqqqn} that 
\begin{align*}
\bft_\sigma&=
\frac{1}{2^d}
\sum_{i_1,\ldots,i_{2d}\in\mathcal I_{m,n}}
(-1)^{\seps'(i_1,\ldots,i_{2d})}_{}y_{i_{2d-1},i_{2d}}\cdots y_{i_{1},i_{2}}
\otimes x_{i_{1},i_{\sigma(2)}}\cdots x_{i_{2d-1},i_{\sigma(2d)}}\\
&
=
\frac{1}{2^d}\sum_{t_1,\ldots,t_{d}\in\mathcal I_{m,n}}
\left(
(-1)_{}^{|t_1|+\sum_{s=1}^d|t_{s}|\cdot |t_{s+1}|}
\prod_{s=1}^d\varphi_{t_{s+1},t_s}
\right)
\end{align*}
where $t_s:=i_{2s}$ for $1\leq s\leq d$ and $t_{d+1}:=i_2$.
Using \eqref{whsD1Dk} and \eqref{sfmPHIEIJ} we can write
\begin{align*}
\widehat\sfs_d\big(
\check\rho\big(\mathrm{str}\big(\mathbf E^d\big)\big)
\big)&=
\sum_{t_1,\ldots,t_d\in\mathcal I_{m,n}}
\!\!\!\!\!\!(-1)^{|t_1|+\sum_{s=1}^d|t_s|\cdot |t_{s+1}|}
\;\widehat\sfs_d\left(
\check\rho(E_{t_2,t_1})\cdots\check\rho(E_{t_1,t_d})
\right)\\
&=
\sum_{t_1,\ldots,t_d\in\mathcal I_{m,n}}
\!\!\!\!\!\!(-1)^{|t_1|+\sum_{s=1}^d|t_s|\cdot |t_{s+1}|}
\;
\widehat\sfs_1\left(
\check\rho(E_{t_2,t_1})\right)\cdots
\widehat\sfs_1\left(
\check\rho(E_{t_1,t_d})
\right)\\
&=(-2)^d\bft_\sigma.
\qedhere\end{align*}
\end{proof}

Let $V:=\C^{m|n}$,  $W:=\sS^2(V)$,  $\g g:=\gl(V)=\gl(m|n)$, and $\check\rho$ be as above. Our first main result is the  following theorem.
 \begin{thm}
\label{prpgw}
\emph{(Abstract Capelli Theorem for $W:=\sS^2(V)$.)} 
For every $d\geq 0$, we have 
\[
\sPD^{ d}(W)^{\g g}=
\check\rho\left(\mathbf{Z}(\g g)\cap \bfU^d(\g g)\right).
\]

\end{thm}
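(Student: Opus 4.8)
The plan is to prove the two inclusions separately. For the easy inclusion $\check\rho(\bfZ(\g g)\cap \bfU^d(\g g))\subseteq \sPD^d(W)^{\g g}$, note first that $\check\rho$ maps $\bfU(\g g)$ into $\sPD(W)$ and $\bfZ(\g g)$ into $\sPD(W)^{\g g}$, since $\check\rho$ is a Lie superalgebra homomorphism and central elements act by $\g g$-invariant operators. For the filtration estimate, one observes that $\check\rho(\bfU^d(\g g))\subseteq \sPD^d(W)$: indeed $\check\rho(E_{i,j})$ has order $1$ by \eqref{ppooll}, so a product of $d$ such operators has order at most $d$. Hence $\check\rho(\bfZ(\g g)\cap\bfU^d(\g g))\subseteq\sPD^d(W)^{\g g}$.

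The substantive direction is $\sPD^d(W)^{\g g}\subseteq\check\rho(\bfZ(\g g)\cap\bfU^d(\g g))$, which I would prove by induction on $d$. The base case $d=0$ is clear since $\sPD^0(W)^{\g g}=\sP(W)^{\g g}=\C$. For the inductive step, take $D\in\sPD^d(W)^{\g g}$ and pass to its top symbol $\widehat\sfs_d(D)\in(\sP(W)\otimes\sS^d(W))^{\g g}$ via the $\g g$-equivariant symbol map; by \eqref{sSksSlk=l} this lands in $(\sP^d(W)\otimes\sS^d(W))^{\g g}$, and by Lemma \ref{prp-g-inv-sPWW} it is a linear combination of the invariant tensors $\bft_\sigma$, $\sigma\in S_{2d}$. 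So it suffices to show each $\bft_\sigma$ is the top symbol of $\check\rho(z)$ for some $z\in\bfZ(\g g)\cap\bfU^d(\g g)$: granting this, one finds $z\in\bfZ(\g g)\cap\bfU^d(\g g)$ with $\widehat\sfs_d(\check\rho(z))=\widehat\sfs_d(D)$, whence $D-\check\rho(z)\in\sPD^{d-1}(W)^{\g g}$, and the inductive hypothesis finishes the argument. To handle $\bft_\sigma$, I would invoke Proposition \ref{taud0dr}: part (i) says $\bft_\sigma=\bft_{\sigma'\sigma\sigma''}$ for $\sigma',\sigma''\in H_{2d}$, and part (ii) lets us choose these so that $\sigma'\sigma\sigma''$ factors as a product $\tau_{d_0,d_1}\tau_{d_1,d_2}\cdots\tau_{d_{r-1},d_r}$ of cycles on disjoint blocks of consecutive letters. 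By Lemma \ref{lem2..5}, $\bft_\sigma$ then factors as a product $\bft_{\tau_{d_0,d_1}}\cdots\bft_{\tau_{d_{r-1},d_r}}$ in $\sP(W)\otimes\sS(W)$. Each factor $\bft_{\tau_{d_{k-1},d_k}}$ is, up to the scalar $(-2)^{d_k-d_{k-1}}$, the degree-$(d_k-d_{k-1})$ symbol of the Gelfand element $\check\rho(\str(\mathbf E^{d_k-d_{k-1}}))$ by Lemma \ref{lem2..8} (after reindexing the block). Using \eqref{whsD1Dk}, the product of these symbols is the top symbol of $\check\rho$ applied to the product $z:=\prod_k (-2)^{-(d_k-d_{k-1})}\str(\mathbf E^{d_k-d_{k-1}})$, which lies in $\bfZ(\g g)$ by Lemma \ref{gelfzhel} and in $\bfU^d(\g g)$ since $\str(\mathbf E^e)$ has filtration degree $e$ and $\sum_k(d_k-d_{k-1})=d$. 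Thus $\bft_\sigma=\widehat\sfs_d(\check\rho(z))$, as needed.

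The main obstacle is the combinatorial heart of the argument, namely reducing an arbitrary permutation $\sigma\in S_{2d}$ to a product of consecutive-block cycles modulo the centralizer $H_{2d}$ of $\sigma_\circ$—this is Proposition \ref{taud0dr}(ii), whose proof via the perfect-matching argument on the bipartite multigraph $\mathcal G$ is delicate. The supporting sign bookkeeping in Lemma \ref{lem2..8}, tracking the Koszul signs $\seps$ through the symmetrizations $\psf_d,\psf_d^*$ and the braiding, is also error-prone but routine. Once those two lemmas are in hand, the induction assembles cleanly; the filtration refinement (that $z$ has order exactly $d$ in $\bfU(\g g)$) comes for free from the fact that the Gelfand elements respect the PBW filtration degree-for-degree.
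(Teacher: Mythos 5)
Your proposal is correct and follows essentially the same route as the paper's proof: induction on $d$, reduction of the top symbol to the invariant tensors $\bft_\sigma$ via Lemma \ref{prp-g-inv-sPWW}, factorization into consecutive-block cycles via Proposition \ref{taud0dr} and Lemma \ref{lem2..5}, and realization of each factor as the symbol of a Gelfand element $\str(\mathbf E^e)$ via Lemma \ref{lem2..8} and \eqref{whsD1Dk}. The only cosmetic difference is that you spell out the easy inclusion and the base case, which the paper dismisses as obvious.
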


\begin{proof}
The inclusion $\supseteq $ is obvious by definition. The proof of the inclusion $\subseteq$ is by induction on $d$. The case $d=0$ is obvious. Next assume that the statement holds for all $d'<d$.
Fix $D\in\sPD^{d}(W)^{\g g}$ such that $\mathrm{ord}(D)=d$. It suffices to find 
$z_D\in\bfZ(\g g)\cap \bfU^{d}(\g g)$ 
such that 
$
\widehat\sfs_{d}
\left(\check\rho(z_D)-D\right)=0
$.\\

\noindent\textbf{Step 1.} 
Choose $\varphi^D\in\sP(W)\otimes \sS(W)$ such that $\sfm(\varphi^D)=D$. We can write
\[
\varphi^D=\varphi^D_0+
\cdots+\varphi^D_{d}
\text{ where }\varphi^D_r\in\sP(W)\otimes \sS^r(W)
\text{ for }0\leq r\leq d,
\]
and Lemma \ref{prp-g-inv-sPWW} implies that 
$\varphi^D_r\in\left(\sP^r(W)\otimes \sS^r(W)\right)^{\g g}$ for $0\leq r\leq d$.
Furthermore, 
\[
\widehat\sfs_{d}(D)=
\widehat\sfs_{d}\left(\sfm
\left(\varphi^D\right)\right)
=
\widehat\sfs_{d}\left(\sfm
\left(\varphi_{d}^D\right)\right)=
\varphi_{d}^D.
\]
By Lemma \ref{prp-g-inv-sPWW}, $\varphi^D_{d}$ is a 
linear combination of the tensors  
$\bft_{\sigma}$ for $\sigma\in S_{2d}$. 
Thus to complete the proof, it suffices to find $z_{ \sigma}\in\bfZ(\g g)\cap \bfU^{d}(\g g)$ such that 
$\widehat\sfs_{d}\left(
\check\rho(z_{\sigma})\right)
=\bft_{\sigma}$.\\

\noindent\textbf{Step 2.} 
Fix $\sigma\in S_{2d}$ and let $0=d_0<d_1\cdots<d_{r-1}<d_r=d$  be as in 
Proposition
\ref{taud0dr}(ii).
Set $\sigma_s:=\left(2,\ldots,2(d_{s+1}-d_{s})\right)\in S_{2(d_{s+1}-d_{s})}^{}$ for $0\leq s\leq r-1$.
From Proposition \ref{taud0dr}(i)
and Lemma \ref{lem2..5} it follows that 
\[
\bft_\sigma=\bft_{\sigma_0^{}}\cdots
\bft_{\sigma_{r-1}^{}}.
\]
Now set 
$z_s:=(-\frac{1}{2})^{d_{s+1}-d_{s}}\,\mathrm{str}
\left(\mathbf E^{d_{s+1}-d_{s}}\right)$ for 
$0\leq s\leq r-1$, so that
$z_s\in\bfZ(\g g)$ by Lemma \ref{gelfzhel}. 
By Lemma \ref{lem2..8},
\begin{equation}
\label{ds+1-ds}
\widehat\sfs_{d_{s+1}-d_{s}}\left(
\check\rho(z_s)\right)=\bft_{\sigma_s}
\text{ for }0\leq s\leq r-1.
\end{equation}
From \eqref{ds+1-ds} it follows that
\begin{align*}
\widehat\sfs_{d}(\check\rho(z_0\cdots z_{r-1}))
&=
\widehat\sfs_{d}(\check\rho(z_0)\cdots \check\rho(z_{r-1}))\\
&=\widehat\sfs_{d_1-d_0}(\check\rho(z_0))
\cdots
\widehat\sfs_{d_{r}-d_{r-1}}(\check\rho(z_{r-1}))
=
\bft_{\sigma_0^{}}\cdots
\bft_{\sigma_{r-1}^{}}=\bft_{\sigma},
\end{align*}
which, as mentioned in Step 1, completes the proof of the theorem.
\end{proof}


\section{The symmetric superpair $\big(\gl(m|2n),\g{osp}(m|2n)\big)$}

\label{Secsuperpr}

From now on, we assume that the odd part of $V$ is even dimensional. In other words, we set $V:=\C^{m|2n}$. We set $W:=\sS^2(V)$ and $\g g:=\gl
(V)\cong\gl(m|2n)$, as before. Let $\beta\in W^*$ be a nondegenerate, symmetric, even,
bilinear form on $V$, so that for every homogeneous $v,v'\in V$ we have $\beta(v,v^{\prime })=0 $ if $|v|\neq
|v^{\prime }|$, and 
$\beta(v^{\prime},v)=(-1)^{|v|\cdot |v^{\prime}|}\beta(v,v^{\prime }) $. Since $\beta\in W^*$ is even, we have
\begin{equation*}
(\check\rho(x)\beta)(w)=-(-1)^{|x|\cdot |\beta|}\beta(\rho(x)
w)=-\beta(\rho(x)w) \text{ for }x\in\g g,\ w\in W. 
\end{equation*}
Set $\g k:=\g{osp}(V,\beta):=\{x\in\gl(V)\, :\, \check\rho(x)\beta=0\}$, so
that 
\begin{equation*}
\g k=\left\{x\in\g g\, :\, \beta(x\cdot v,v^{\prime }) +(-1)^{|x|\cdot |v|}
\beta(v,x\cdot v^{\prime })=0\text{ for }v,v^{\prime }\in V \right\}. 
\end{equation*}
In this section we prove that every irreducible $\g g$-submodule of $\sP(W)$
contains a non-zero $\g k$-invariant vector.  
We remark that this statement does not follow from
\cite[Thm A]{AllSch}.

Recall the decomposition
$\g g=\g n^-\oplus\g h\oplus\g n^+$
from Section \ref{prfof571}.
Let $J_{2n}$ be the $2n\times 2n$ block-diagonal matrix defined by 
\begin{equation*}
J_{2n}:=\mathrm{diag}(\underbrace{J,\ldots,J}_{n\text{ times}}) \text{ where 
} J:=%
\begin{bmatrix}
0 & 1 \\ 
-1 & 0%
\end{bmatrix}
.
\end{equation*}
Let $I_m$ denote the $m\times m$ identity matrix. Without loss of generality
we can fix a homogeneous basis $\{\bfe_i\, :\, i\in\mathcal{I}_{m,2n}\}$ 
for $V\cong\C^{m|2n}$ such that 
\begin{equation}  \label{beiej=c}
\left[ \beta(\bfe_i{},\bfe_j^{}) \right]_{i,j\in\mathcal{I}_{m,2n}} =%
\begin{bmatrix}
I_m & 0 \\ 
0 & J_{2n}%
\end{bmatrix}%
.
\end{equation}

Set $\g a:=\left\{h\in\g h\,:\,\eps_{\oline{2k-1}}^{}(h)=\eps_{\oline{2k}%
}^{}(h)\text{ for }1\leq k\leq n\right\}$ and 
\begin{equation*}
\g t:=\left\{h\in\g h\,:\,\eps_k^{}(h)=0\text{ for }1\leq k\leq m\text{ and }%
\eps_{\oline{2l-1}}^{}(h)=-\eps_{\oline{2l}}^{}(h)\text{ for }1\leq l\leq
n\right\}. 
\end{equation*}
Then 
$\g t=\g k\cap \g h$ and 
$\g h=\g t\oplus\g a$. Set 
\begin{equation}
\label{gamkldf}
\gamma_k:=\eps_k\big|_{\g a} \text{ for }1\leq k\leq m\text{ and } \gamma_{%
\oline{l}}^{}:=\eps_{\oline{2l}}^{}\big|_{\g a} \text{ for }1\leq l\leq n. 
\end{equation}
The restricted root system of $\g g$ corresponding to $\g a$ will be denoted
by $\Sigma:=\Sigma^+\cup\Sigma^-$, where $\Sigma^+:=\Sigma_\eev^+\cup\Sigma_%
\ood^+$ is explicitly given by 
\begin{equation*}
\Sigma_\eev^+:=\Big\{\gamma_k^{}-\gamma_l^{}\Big\}
_{1\leq k<l\leq m}\cup \Big\{\gamma_{\oline k}-\gamma_{\oline l}\Big\}
_{1\leq k<l\leq n} \ \text{ and }\ \Sigma_\ood^+:=\Big\{\gamma_k^{}-\gamma_{%
\oline{l}}^{}\Big\}
_{1\leq k\leq m,\,1\leq l\leq n}. 
\end{equation*}
Furthermore, as usual $\Sigma^-=-\Sigma^+$.

For every $\gamma\in\Sigma
$ we set $\g g_{\gamma}:=\{x\in\g g\,:\,[h,x]=\gamma(h)x\text{ for every }%
h\in\g a\}$. Then 
\begin{equation}  \label{iwasawa}
\g g=\g k\oplus\g a \oplus\g u^+\text{ where } \g u^\pm:=\bigoplus_{\gamma%
\in\Sigma^\pm}\g g_\gamma\subseteq\g n^\pm.
\end{equation}

\begin{rmk}
\label{rmkbasisK}
Recall from Section \ref{prfof571} that
using the basis $\{\bfe_i\, :\, i\in\mathcal I_{m,2n}\}$ of $V=\C^{m|2n}$, 
we can represent every element of $\g g$ 
as an $(m,2n)$-block matrix whose rows and columns are indexed by the elements of $\mathcal I_{m,2n}$.
The following matrices form a spanning set of $\g k$.
\begin{itemize}
\item[(i)] $E_{k,l}-E_{l,k}$ for $1\leq k\neq l\leq m$,
\item[(ii)] $E_{\oline{2l-1},\oline{2k-1}}-E_{\oline{2k},\oline{2l}}$ for $1\leq k,l\leq n$,
\item[(iii)]
$E_{\oline{2l-1},\oline{2k}}+E_{\oline{2k-1},\oline{2l}}$ for $1\leq k,l\leq n$,
\item[(iv)] 
$E_{\oline{2l},\oline{2k-1}}+E_{\oline{2k},\oline{2l-1}}$ for $1\leq k,l\leq n$,
\item[(v)] $E_{k,\oline{2l-1}}+E_{\oline{2l},k}$
for $1\leq k\leq m$ and $1\leq l\leq n$,
\item[(vi)] $E_{k,\oline{2l}}-E_{\oline{2l-1},k}$
for $1\leq k\leq m$ and $1\leq l\leq n$.
\end{itemize}

\end{rmk}

Every irreducible finite dimensional representation of $\g g$ is a highest
weight module. Unless stated otherwise, the highest weights that we
consider will be with respect to the standard Borel subalgebra $\g b:=\g %
h\oplus\g n^+$. In this case, the highest weight $\lambda\in\g h^*$ can be written as
\begin{equation*}
\lambda=\sum_{k=1}^m\lambda_k^{}\eps_k^{}+ \sum_{l=1}^{2n}\lambda_{m+l}^{}\eps_{%
\oline l}\in\g h^*, 
\end{equation*}
such that
$\lambda_k-\lambda_{k+1}\in\Z_{\geq 0}$ for every 
$k\in\{1,\ldots,m-1\}\cup\{m+1,\ldots,m+2n-1\}$, where 
$\Z_{\geq 0}:=\{x\in\Z\,:\,x\geq 0\}$.

\begin{lem}
\label{lem-dimleq1}
Let $U$ be an irreducible finite dimensional  $\g g$-module. 
Then $\dim U^\g k\leq  1$.
\end{lem}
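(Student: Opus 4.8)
The plan is to prove that $\dim U^{\g k}\le 1$ by exhibiting a ``cyclic'' vector for the $\g k$-action that detects all of $U^{\g k}$. Concretely, fix a highest weight vector $v_\lambda$ for the standard Borel $\g b=\g h\oplus\g n^+$, and let $v^-\in U$ be a lowest weight vector (highest weight for the opposite Borel). I would first argue that the projection of any $\g k$-fixed vector onto the line $\C v_\lambda$ along $\g n^- U$ is injective on $U^{\g k}$; equivalently, that $U^{\g k}\cap \g n^- U=0$. For this I would use the Iwasawa-type decomposition $\g g=\g k\oplus\g a\oplus\g u^+$ from \eqref{iwasawa}, together with the weight-space structure: a $\g k$-fixed vector is in particular $\g t$-fixed (since $\g t=\g k\cap\g h$), hence lies in the sum of weight spaces whose weights vanish on $\g t$. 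Because $\g h=\g t\oplus\g a$, such weights are determined by their restriction to $\g a$, i.e. by the restricted weight, and I can then run a highest-restricted-weight argument: among the restricted weights occurring in $U^{\g k}$ there is a maximal one, and the corresponding component must be proportional to the image of $v_\lambda$.

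More carefully, the key step is: if $u\in U^{\g k}$ then the $\g a$-weight components of $u$ are controlled, and the top one is a multiple of (the restricted-weight projection of) $v_\lambda$. To see this I would take $u\in U^{\g k}$, decompose $u=\sum_\mu u_\mu$ into $\g a$-weight vectors, let $\mu_0$ be maximal (with respect to the ordering from $\Sigma^+$) among those with $u_{\mu_0}\neq 0$, and apply an element $x\in\g u^+\subseteq \g n^+$. Since $x u = 0$ would force $u_{\mu_0}$ to be a highest weight vector for $\g n^+$; but more precisely, using $\g g=\g k\oplus\g a\oplus\g u^+$ and $\g k u=0$, for any $x\in\g u^+$ we get $\rho(x)u = -\rho(h_x)u$ for the $\g a$-part, which lives in strictly higher or equal weight, contradicting maximality of $\mu_0$ unless the relevant components vanish. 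This pins down $u_{\mu_0}$ as a highest weight vector of weight $\lambda$, hence $u_{\mu_0}\in\C v_\lambda$. Therefore the map $U^{\g k}\to\C v_\lambda$, $u\mapsto u_{\mu_0}$, is well defined on the top restricted weight and injective — and then one shows the whole of $U^{\g k}$ maps into $\C v_\lambda$ under the weight projection, giving $\dim U^{\g k}\le 1$.

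An alternative, perhaps cleaner route I would also consider: use that $\g g = \g u^- \oplus \g k$ is \emph{not} generally a direct sum, but $\g g = \g n^- \oplus \g b$ is, and relate $\g k$-invariants to $\g n^-$-invariants via a triangular change. Since every $\g k$-fixed vector $u$ satisfies $\rho(E_{k,l}-E_{l,k})u=0$ etc. (the generators listed in Remark \ref{rmkbasisK}), one can write each lowering generator $E_{l,k}$ ($k<l$) as a combination of a $\g k$-element and raising operators; applying this to $u$ shows $\g n^- u$ is already determined by the $\g n^+$-action on $u$ together with $u$ itself, forcing $u$ into a one-dimensional space once normalized by its $v_\lambda$-component. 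Either way, the main obstacle is the sign bookkeeping inherent to the superalgebra setting and making the ``maximal restricted weight'' argument rigorous when weight spaces are not one-dimensional — one must ensure that the restriction map $\g h^*\to\g a^*$ does not accidentally merge distinct weights in a way that breaks the maximality argument, which is exactly where the hypothesis that $\g k$-fixed vectors are $\g t$-fixed (collapsing the $\g t$-direction) is used.
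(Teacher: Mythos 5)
Your opening reduction is sound: $U=\C v_\lambda\oplus\g n^-U$ with $\g n^-U=\bigoplus_{\mu\neq\lambda}U(\mu)$ (the sum of the non-highest weight spaces), so it suffices to prove $U^{\g k}\cap\g n^-U=\{0\}$. The problem lies in the step meant to accomplish this. The identity ``$\rho(x)u=-\rho(h_x)u$ for the $\g a$-part'' for $x\in\g u^+$ is not correct and is not what \eqref{iwasawa} gives you: an element $x\in\g g_\gamma$ with $\gamma\in\Sigma^+$ already \emph{is} the $\g u^+$-component of itself, so decomposing it yields nothing. What you actually need is the mechanism by which $\g u^-$ is absorbed into $\g k\oplus\g a\oplus\g u^+$: for $x\in\g g_\gamma$ there is $\theta(x)\in\g g_{-\gamma}$ with $x+\theta(x)\in\g k$ (visible in the spanning set of Remark \ref{rmkbasisK}), whence $\rho(x)u=-\rho(\theta(x))u$ for $u\in U^{\g k}$; comparing components of restricted weight $\mu_0+\gamma$ gives $\rho(x)u_{\mu_0}=-\rho(\theta(x))u_{\mu_0+2\gamma}=0$ by maximality of $\mu_0$. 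Even after this repair you are not finished: $\g n^+$ is strictly larger than $\g u^+$, since the root vectors $E_{\oline{2k-1},\oline{2k}}$ have zero restricted weight. To conclude that $u_{\mu_0}$ is a genuine $\g b$-highest weight vector (and hence a multiple of $v_\lambda$, using $\dim U^{\g n^+}=1$) you must also observe that this zero-restricted-weight part of $\g n^+$ lies in $\g k$ and preserves restricted weight spaces, so it kills each component $u_\mu$ separately. Your ``alternative route'' in the last paragraph is not an argument as it stands.

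All of this bookkeeping can be avoided, and the paper does so using the same two ingredients. Set $U':=\bigoplus_{\mu\neq\lambda}U(\mu)$; this is a $\Ztwo$-graded subspace of codimension one, stable under $\g a\oplus\g u^-$. If $\dim U^{\g k}\geq 2$, then since $U^{\g k}$ is also $\Ztwo$-graded there is a nonzero homogeneous $u\in U'\cap U^{\g k}$, and PBW applied to the Iwasawa decomposition in the form $\g g=\g k\oplus\g a\oplus\g u^-$ gives $U=\bfU(\g g)u=\bfU(\g a\oplus\g u^-)\bfU(\g k)u=\bfU(\g a\oplus\g u^-)u\subseteq U'$, contradicting irreducibility. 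I recommend either adopting this cyclicity argument or rewriting your restricted-weight argument with the corrections indicated above.
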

\begin{proof} 
Let $\lambda\in\g h^*$ be the highest weight of $U$ and set $U':=\bigoplus_{\mu\neq\lambda}U(\mu)$, where $U(\mu)$ is the $\mu$-weight space of $U$. Clearly $U'$ is a $\Ztwo$-graded subspace of $U$ of codimension one, which is invariant under the action of $\g a\oplus\g u^-$. Note also that $U^\g k$ is a $\Ztwo$-graded subspace of $U$.
Suppose that 
$\dim U^\g k\geq  2$, so that $U'\cap U^\g k\neq\{0\}$. Fix 
a non-zero homogeneous vector
$u\in  U'\cap U^\g k$.
The PBW Theorem and the Iwasawa decomposition
\eqref {iwasawa} imply that
\[
U=\bfU(\g g)u=
\bfU(\g a\oplus \g u^-)\bfU(\g k)u
=\bfU(\g a\oplus\g u^-)u\subseteq U',
\]
which is a contradiction.
\end{proof}

A \emph{partition} is a sequence $\flat:=(\flat_1,\flat_2,\flat_3,\ldots)$ 
 of
integers satisfying $
\flat_k\geq \flat_{k+1}\geq 0
$ for every $k\geq 1$, and
$\flat_k=0$ for all but finitely many 
$k\geq 1$. The \emph{size} of $\flat$
is defined to be $|\flat|:=\sum_{k=1}^\infty \flat_k$. The \emph{transpose} of $
\flat$ is denoted by $\flat^{\prime }:=(\flat_1^{\prime },\flat_2^{\prime
},\flat_3^{\prime }\ldots)$, where \[
\flat_k^{\prime }:=|\{l\geq 1\ :\
\flat_l\geq k\}|.
\] 
\begin{dfn}
\label{dfnhookprn}
A partition 
$(\flat_1,\flat_2,\flat_3,\ldots)$ that satisfies
$\flat_{m+1}\leq n$
is called an \emph{$(m|n)$-hook partition}.
The set of $(m|n)$-hook partitions of size $d$ will be denoted by $\mathrm{H}_{m,n,d}$, and we set
$\mathrm{H}_{m,n}:=\bigcup_{d=0}^\infty
\mathrm{H}_{m,n,d}$.
\end{dfn}



We now define a map \begin{equation}
\label{bfGam}
\boldsymbol{\Gamma}:\mathrm{H}_{m,n}\to \g a^*
\end{equation} 
as
follows.  To every $\flat=(\flat_1,\flat_2,\flat_3,\ldots)\in\mathrm{H}%
_{m,n}$ we associate the element $\lambda=\boldsymbol{\Gamma}(\flat)\in\g %
a^*$ given by 
\begin{equation}
\label{frmLamBk}
\lambda:=\sum_{k=1}^m2\flat_k\gamma_k+ \sum_{l=1}^n2\max\big\{\flat^{\prime
}_l-m,0\big\}\gamma_{\oline l} \in \g a^*. 
\end{equation}
For integers $m,n,d\geq 0$, set 
\begin{equation*}
\mathrm{E}_{m,n,d}:=\boldsymbol{\Gamma}(\mathrm{H}_{m,n,d}):= \big\{
\lambda\in\g a^*\,:\,\lambda=\boldsymbol{\Gamma}(\flat)\text{ for some }%
\flat\in \mathrm{H}_{m,n,d} \big\}
\end{equation*}
and
\begin{equation}
\label{Emndf}
\mathrm{E}_{m,n}:= \bigcup_{d=0}^\infty\mathrm{E}_{m,n,d}.
\end{equation}

Because of the decomposition $\g h=\g a\oplus\g t$, every $\lambda\in\g a^*$
can be extended trivially on $\g t$ to yield an element of $\g h^*$. It is
straightforward to verify that for every $\lambda\in\mathrm{E}_{m,n}$, the
above extension of $\lambda$ to $\g h^*$ is the highest weight of an irreducible $\g g
$-module. We will denote the latter highest weight module by $V_\lambda$.
From \cite[Thm 3.4]{ChWa} or \cite{brini} it follows that  
\begin{equation}  \label{prpSWW}
\sS^d(W)\cong\bigoplus_{\lambda\in\mathrm{E}_{m,n,d}} \!V_{\lambda}\ \ \ 
\text{as \,$\g g$-modules}.
\end{equation}
\begin{rmk}
\label{prpChWaSe}
From \eqref{prpSWW} it follows that 
\begin{equation}
\label{prpSWW*}
\sP^d(W)\cong\bigoplus_{\lambda\in\mathrm{E}_{m,n,d}}
V_\lambda^*
\cong
\bigoplus_{\mu\in\mathrm{E}^*_{m,n,d}}
V_\mu^{}
\end{equation}
where
$\mathrm{E}_{m,n,d}^*$ is the set of all $\mu\in\g a^*$ 
of the form
\begin{equation}
\label{eqhwofVl}
\mu=-\sum_{i=1}^m 2\max\{\flat_{m+1-i}-n,0\}\gamma_i-\sum_{j=1}^n 2\flat'_{n+1-j}\gamma_{\oline j} 
\text{ for some }\flat\in\mathrm{H}_{m,n,d},
\end{equation}
and $V_\mu$ denotes the $\g g$-module whose highest weight is the extension of $\mu$ trivially on $\g t$ 
to $\g h^*$.
 The decomposition \eqref{prpSWW*} follows from the fact that
for every $\lambda\in \mathrm{E}_{m,n,d}$, the contragredient module $V_\lambda^*$ has highest weight equal to the extension to $\g h^*$ of $-\lambda^-$, where $\lambda^-$ is the highest weight of
$V_\lambda$ with respect to the opposite Borel subalgebra $\g b^-:=\g h\oplus\g n^-$.
From the calculation of  $\lambda^-$ given in 
\cite[Thm 6.1]{OlPr} or \cite[Sec. 2.4.1]{ChWabook},
it follows that  the highest weight of $V_\lambda^*$ is of the form \eqref{eqhwofVl}.
\end{rmk}
Set 
\begin{equation}
\label{Emn*df}
\mathrm{E}_{m,n}^*:=\bigcup_{d=0}^\infty \mathrm{E}^*_{m,n,d}.
\end{equation} By the $
\g g$-equivariant isomorphism \eqref{eq-sfm:}, 
\begin{align}  \label{spdgCB}
\displaystyle\sPD(W)^\g g_{} &\cong \big(\sP(W)\otimes\sS(W)\big)^\g g \cong
\bigoplus_{\lambda,\mu\in\mathrm{E}_{m,n}^{*}} (V_\lambda\otimes V_\mu^*)^\g %
g \cong \bigoplus_{\lambda,\mu\in\mathrm{E}_{m,n}^{*}} \Hom_\g %
g(V_\mu,V_\lambda).
\end{align}
Every non-zero element of $\Hom_\g g(V_\mu,V_\lambda)$ should map $V_\mu^{\g %
n^+}$ to $V_\lambda^{\g n^+}$, and is uniquely determined by the image of a
highest weight vector in $V_\mu^{\g n^+}$. But $\dim(V_\lambda^{\g %
n^+})=\dim(V_\mu^{\g n^+})=1$, so that $\dim\left(\Hom_\g g(V_\lambda,V_\mu)\right)\leq 1 $
with equality if and only if $\lambda=\mu$.

\begin{dfn}
\label{DefDla}
For  $\lambda\in\mathrm{E}_{m,n}^*$, 
let $D_\lambda\in \sPD(W)^\g g$
be the $\g g$-invariant differential operator that corresponds 
via
the sequence of isomorphisms \eqref{spdgCB}
to  $1_{V_\lambda}^{}\in\mathrm{End}_\C(V_\lambda)$.
\end{dfn}
From \eqref{spdgCB} it follows that $\{D_\lambda\,:\,\lambda\in \mathrm{E}^*
_{m,n}\}$ is a basis for $\sPD(W)^\g g$. In the purely even case (that is,
when $n=0$), the latter basis is sometimes called the \emph{Capelli basis}
of $\sPD(W)^\g g$.

Let $\beta\in W^*$ be the symmetric bilinear form defining $\g k$, as at the beginning of Section \ref{Secsuperpr}. 
Let 
$\sfh_\beta:\sS(W)\to \C$ be the extension of $\beta$, defined as in \eqref{dfheta}. Set 
\begin{equation}
\label{tildhbeta}
\widetilde\sfh_\beta:\sP(W)\otimes \sS(W)\to 
\sP(W) \ ,\ a\otimes b\mapsto \sfh_\beta(b)a.
\end{equation}
Recall the map $\sfm:\sP(W)\otimes\sS(W)\to \sPD(W)$ defined in \eqref{eq-sfm:}.
\begin{prp}
\label{le-dlam}
Let  $\lambda\in\mathrm{E}^*_{m,n}$.  
Then $\dim V_{\lambda}^\g k=1$, and the vector
\[
\mathbf d_\lambda:=(\widetilde \sfh_\beta\circ \sfm^{-1})(D_\lambda).
\]
is a   
non-zero $\g k$-invariant vector of $V_\lambda$.
\end{prp}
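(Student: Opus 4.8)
The plan is to identify $V_\lambda$ inside $\sP(W)$ via the decomposition \eqref{prpSWW*}, and to show that $\mathbf d_\lambda$ lands in that copy of $V_\lambda$ and is a nonzero $\g k$-fixed vector. First I would recall that, under the sequence of isomorphisms \eqref{spdgCB}, the operator $D_\lambda$ corresponds to $1_{V_\lambda}\in\mathrm{End}_\C(V_\lambda)\subseteq\bigoplus_{\mu,\nu}\Hom_\g g(V_\nu,V_\mu)$; pulling back along $\sfm^{-1}$ gives an element $\sfm^{-1}(D_\lambda)\in(\sP(W)\otimes\sS(W))^\g g$ lying in the summand $(V_\lambda\otimes V_\lambda^*)^\g g$ under the identification $\sP(W)\cong\bigoplus_{\mu\in\mathrm E^*_{m,n}}V_\mu$ and $\sS(W)^*\hookleftarrow$ the corresponding dual pieces. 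Concretely, $\sfm^{-1}(D_\lambda)$ is the canonical invariant tensor in $V_\lambda\otimes V_\lambda^*$, i.e.\ $\sum_i f_i\otimes f_i^*$ for dual bases of the $V_\lambda$-isotypic component of $\sP(W)$ and its partner in $\sS(W)$.

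\textbf{Key steps.} The map $\widetilde\sfh_\beta$ of \eqref{tildhbeta} applies $\sfh_\beta:\sS(W)\to\C$ to the second tensor factor. The point is that $\sfh_\beta$, restricted to the $V_\lambda$-copy sitting inside $\sS(W)$, is a $\g k$-equivariant functional: since $\beta$ is $\g k$-invariant (indeed $\g k$ is by definition the stabilizer of $\beta$), the homomorphism $\sfh_\beta:\sS(W)\to\C$ is $\g k$-equivariant, hence it restricts to an element of $\Hom_\g k(V_\lambda,\C)=(V_\lambda^*)^\g k$. Applying $\widetilde\sfh_\beta$ to $\sum_i f_i\otimes f_i^*$ therefore produces $\sum_i \sfh_\beta(\text{partner of }f_i)\, f_i$, which is exactly the image of that $\g k$-functional under the natural map $(V_\lambda^*)^\g k\otimes V_\lambda\to V_\lambda$; equivalently $\mathbf d_\lambda$ is $\g k$-invariant because contracting an invariant tensor with a $\g k$-fixed functional yields a $\g k$-fixed vector. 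So $\mathbf d_\lambda\in V_\lambda^\g k$, and by Lemma \ref{lem-dimleq1} we already know $\dim V_\lambda^\g k\le 1$. It remains to prove $\mathbf d_\lambda\neq 0$, equivalently $\dim V_\lambda^\g k\geq 1$, equivalently $\sfh_\beta$ does not vanish identically on the $V_\lambda$-copy inside $\sS(W)$. For this I would use the explicit model: $V_\lambda$ occurs in $\sS^d(W)$ for a unique $d$ (by \eqref{sSksSlk=l} and \eqref{prpSWW}), and its highest weight vector with respect to $\g b$ can be written down explicitly in terms of the generators $x_{i,j}$ of $\sS(W)$ from \eqref{eq-dfxijyij} — it is a product of ``principal minors'' of the symmetric matrix $(x_{i,j})$, analogous to the classical even case. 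One then checks that $\sfh_\beta$ evaluated on this highest weight vector — which amounts to substituting $x_{i,j}\mapsto\beta(\bfe_i,\bfe_j)$, whose matrix is \eqref{beiej=c} — gives a nonzero value, since the relevant minors of the block matrix $\mathrm{diag}(I_m,J_{2n})$ are $\pm1$.

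\textbf{Main obstacle.} The genuinely delicate step is the non-vanishing of $\mathbf d_\lambda$, i.e.\ step where one must produce an explicit $\g n^+$-highest weight vector (or some other tractable element) of the $V_\lambda$-isotypic piece of $\sS(W)$ on which $\sfh_\beta$ is nonzero; in the super setting the ``principal minor'' must be interpreted as a Berezinian-type or Capelli-type determinant and one has to verify it is genuinely nonzero in $\sS^d(W)$ and that $\sfh_\beta$ does not kill it — the signs in \eqref{eq-dfxijyij} and the $J_{2n}$-block make this sign-bookkeeping the real work. An alternative, cleaner route to non-vanishing that avoids explicit minors: show directly that $\widetilde\sfh_\beta\circ\sfm^{-1}$ is injective on $(\sP(W)\otimes\sS(W))^\g g$ restricted to appropriate summands, or dually, identify $\mathbf d_\lambda$ with the ``reproducing'' element and use the fact that the pairing $\sS^d(W)\times\sP^d(W)\to\C$ is nondegenerate together with $\g k$-equivariance to force $\sfh_\beta|_{V_\lambda}\neq 0$ whenever $V_\lambda$ actually appears in $\sS^d(W)$ — but one still needs the input that \emph{every} $V_\lambda$ with $\lambda\in\mathrm E^*_{m,n}$ has a $\g k$-fixed vector, which is precisely a super Cartan–Helgason statement and is the crux flagged in the introduction. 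I expect the paper to handle this via the explicit description of the $V_\lambda$ appearing in $\sS^d(W)$ from \eqref{prpSWW}, reducing to a finite, combinatorial check on hook partitions.
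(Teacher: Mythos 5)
Your first half is exactly the paper's argument: $\g k$-equivariance of $\sfh_\beta$ gives $\mathbf d_\lambda\in V_\lambda^{\g k}$, Lemma \ref{lem-dimleq1} gives the upper bound on the dimension, and the identification of $\sfm^{-1}(D_\lambda)$ with the canonical tensor in $V_\lambda\otimes V_\lambda^*$ reduces everything to showing $\sfh_\beta|_{V_\lambda^*}\neq 0$ (the paper phrases this as saying $\sfh_\beta|_{V_\lambda^*}$ is evaluation at a vector $v_\circ$ and $\mathbf d_\lambda=v_\circ$). One small imprecision: $\mathbf d_\lambda\neq 0$ is equivalent to $\sfh_\beta|_{V_\lambda^*}\neq 0$, not to $\dim V_\lambda^{\g k}\geq 1$ in general; but since your plan is to prove the former, this does not affect the logic.

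The genuine gap is in the non-vanishing step, and you have correctly diagnosed where it is but not closed it. Your primary route --- write the $\g b$-highest weight vector of the copy of $V_{\lambda^*}$ in $\sS^d(W)$ as a product of principal minors of $(x_{i,j})$ and evaluate at $\mathrm{diag}(I_m,J_{2n})$ --- works verbatim only when $n=0$. For a hook partition $\flat$ with $\flat'_l>m$, the joint highest weight vectors of the $(\gl(m|2n),\sS^2)$ Howe duality are not principal minors of the supersymmetric matrix; they are more elaborate determinant-like expressions mixing even and odd variables, and verifying that $\sfh_\beta$ (which sets the purely odd entries $x_{\oline i,\oline j}$ to the entries of $J_{2n}$ and the mixed entries to $0$) does not annihilate them is precisely the computation you defer. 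Your fallback --- nondegeneracy of the pairing plus the existence of a $\g k$-fixed vector in every $V_\lambda$ --- is circular, as you note: that existence statement is the content of the proposition. So as written the proof is incomplete at its crux.

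The paper closes this gap by a quite different and computation-free argument, which is worth comparing with. Assuming $\sfh_\beta|_{V_\lambda^*}=0$, it proves by induction on $k$ that
\begin{equation*}
\sfh_\beta\big(\sfD_{j_1,i_1}\cdots\sfD_{j_k,i_k}(a)\big)=0
\end{equation*}
for every $a\in V_\lambda^*$ and all indices. The induction step uses only that $V_\lambda^*$ is $\rho(\g g)$-stable together with the explicit form \eqref{polrhoEi-j} of the polarization operators: applying $\rho(E_{i',j})$ to $a$ stays in $V_\lambda^*$, and after evaluating with $\sfh_\beta$ (which is an algebra homomorphism) the leading term recovers $\pm\sfh_\beta(\sfD_{j,i}(a))$ while the lower-order terms vanish by induction. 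Since $\sfh_\beta$ together with all the $\sfD_{j,i}$-derivatives amounts to the full Taylor expansion of $a$ at the point $\beta$ (after splitting off the finitely many odd variables), this forces $a=0$, contradicting $V_\lambda^*\neq 0$. In effect the paper trades your explicit highest-weight-vector computation for the single soft fact that $\beta$ is a nondegenerate point of $W^*$ at which formal Taylor expansion detects polynomials; this is what lets the argument bypass the super Cartan--Helgason difficulty you flagged. If you want to salvage your route, you would need to import the explicit highest weight vectors from Cheng--Wang's Howe duality and carry out the sign computation for the columns of length greater than $m$; that is doable but is a genuinely separate piece of work that your proposal leaves open.
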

\begin{proof}
Since  $\beta\in (W^*)^\g k$, the map $\sfh_\beta$ 
is 
$\g k$-equivariant.
It follows that 
$\widetilde\sfh_\beta=\yekP\otimes \sfh_\beta$ is
 $\g k$-equivariant, and therefore 
 $\mathbf d_\lambda\in V_\lambda^\g k$.
By Lemma \ref{lem-dimleq1}, it suffices to 
 prove that $\mathbf d_\lambda\neq 0$.
Fix $d\geq 0$ such that $\lambda\in \mathrm{E}_{m,n,d}^*$, so that $V_\lambda\in \sP^d(W)$ and $V_\lambda^*\subseteq \sS^d(W)$.\\


\noindent\textbf{Step 1.} 
The linear map
$\sfh_\beta\big|_{V_\lambda^*}:V_\lambda^*\to\C$ can be represented by evaluation at a vector $v_\circ\in V_\lambda$, that is, $\sfh_\beta(v^*):=\lag v^*,v_\circ\rag$ for every $v^*\in V_\lambda^*$.

Let $\iota_\lambda:V_\lambda\otimes V_\lambda^*\to \Hom_\C(V_\lambda,V_\lambda)$ 
be the
isomorphism \eqref{U'U*}. 
For every 
$v\otimes v^*\in V_\lambda\otimes V_\lambda^*$,
\[
\widetilde\sfh_\beta
\circ\iota_\lambda^{-1}(T_{v\otimes v^*}^{})
=
\widetilde\sfh_\beta
(v\otimes v^*)
=
\sfh_\beta(v^*)v
=
\lag v^*,v_\circ\rag v
=
T_{v\otimes v^*}(v_\circ).
\]
It follows that  $\widetilde\sfh_\beta\circ \iota_\lambda^{-1}(T)=Tv_\circ$ for every $T\in\mathrm{End}_\C(V_\lambda)$. In particular, \[
\mathbf d_\lambda=\widetilde\sfh_\beta\circ\iota_\lambda^{-1}(1_\lambda)=v_\circ,
\] so that to complete the proof we need to show that $v_\circ\neq 0$, or equivalently, that $\sfh_\beta\big|_{V_\lambda^*}\neq 0$.\\


\noindent\textbf{Step 2.}
Let $\{x_{i,j}\ :\ {i,j\in\mathcal I_{m,2n}}\}$ be the
generating set  of 
$\sS(W)$ that is defined in 
\eqref{eq-dfxijyij}. We define the superderivations 
$\sfD_{i,j}$, for $i,j\in\mathcal I_{m,2n}$, as in \eqref{pari-j}.
Assume that 
$\sfh_\beta\big|_{V_\lambda^*}=0$. We  will reach a contradiction in Step 3.
 Fix $a\in V_\lambda^*\subseteq \sS^d(W)$. We claim that
\begin{equation}
\label{j1i1jkik}
\sfh_\beta\left(
\sfD_{j_1,i_1}\cdots\sfD_{j_k,i_k}(a)
\right)=0
\text{ for every }
k\geq 0,\ i_1,j_1,\ldots,i_k,j_k\in\mathcal I_{m,2n},
\end{equation}
where $\sfD_{i,j}$ is defined as in \eqref{pari-j}.
The proof of \eqref{j1i1jkik} is by induction on $k$. For
$k=0$, it follows from the assumption that $\sfh_\beta\big|_{V_\lambda^*}=0$. Next assume $k=1$. 
Set
\begin{equation}
\label{eqdfiprm}
i'_1:=\begin{cases}
i_1&\text{ if }|i_1|=\eev,\\
\oline{2k}&\text{ if }i_1=\oline{2k-1}\text{ where }
1\leq k\leq n,\\
\oline{2k-1}&\text{ if }i_1=\oline{2k}\text{ where }
1\leq k\leq n,
\end{cases}
\end{equation}
Then
$\rho\big(E_{i_1',j_1^{}}\big)a\in V_\lambda$, so that
$\sfh_\beta\big(\rho\big(E_{i_1',j_1^{}}\big)a\big)=0$. But
from \eqref{polrhoEi-j} it follows that
\begin{align*}
\sfh_\beta
\big(\rho\big(E_{i_1',j_1^{}}\big)a\big)
&=\sum_{r\in\mathcal I_{m,2n}}\sfh_\beta
\big(x_{i_1',r}\big)
\sfh_\beta\big(\sfD_{j_1,r}(a)\big)\\
&=
\sum_{r\in\mathcal I_{m,2n}}
\beta(\bfe_{i_1'},\bfe_r)
\sfh_\beta\big(\sfD_{j_1,r}(a)\big)=
\pm\sfh_\beta\big(\sfD_{j_1,i_1}(a)\big),
\end{align*}
so that $\sfh_\beta(\sfD_{j_1,i_1}(a))=0$.

Finally, assume that $k>1$. We define $i_1',\ldots,i_k'\in\mathcal I_{m,2n}$ from $i_1^{},\ldots,i_k^{}$ according to
\eqref{eqdfiprm}.
Then
$\rho\big(E_{i_1',j_1^{}}\big)\cdots \rho\big(E_{i_k',j_k^{}}\big)a\in V_\lambda$,
so that
$
\sfh_\beta\big(
\rho\big(E_{i_1',j_1^{}}\big)\cdots \rho\big(E_{i_k',j_k^{}}\big)a
\big)
=0$.
However,
we can write
\begin{equation}
\label{rhEkkk}
\rho\big(E_{i_1',j_1^{}}\big)\cdots \rho\big(E_{i_k',j_k^{}}\big)a
=
\sum_{r_1,\ldots,r_k\in\mathcal I_{m,2n}}
x_{i_1',r_1^{}}\cdots x_{i_k',r_k^{}}
\sfD_{j_1^{},r_1^{}}\cdots
\sfD_{j_k^{},r_k^{}}(a)
+R_a,
\end{equation}
where $R_a$ is a sum of elements of 
$\sS(W)$ of the form  
$b\sfD_{s_1,t_1}\cdots\sfD_{s_\ell,t_\ell}(a)$, with $b\in\sS(W)$ and $\ell<k$.
Since $\sfh_\beta$ is an algebra homomorphism, the induction hypothesis implies that $\sfh_\beta(R_a)=0$. 
Therefore \eqref{rhEkkk} implies that 
\begin{align*}
\sfh_\beta
\big(
\rho\big(E_{i_1',j_1^{}}\big)\cdots \rho\big(E_{i_k',j_k^{}}\big)a
\big)
&=
\sum_{r_1,\ldots,r_k\in\mathcal I_{m,2n}}
\sfh_\beta\big(x_{i_1',r_1^{}}\big)\cdots \sfh_\beta(x_{i_k',r_k^{}}\big)
\sfh_\beta\big(\sfD_{j_1^{},r_1^{}}\cdots
\sfD_{j_k^{},r_k^{}}(a)\big)\\
&=\pm \sfh_\beta
\big(\sfD_{j_1^{},i_1^{}}\cdots
\sfD_{j_k^{},i_k^{}}(a)\big),
\end{align*}
so that $\sfh_\beta
\big(\sfD_{j_1^{},i_1^{}}\cdots
\sfD_{j_k^{},i_k^{}}(a)\big)=0$.\\

\noindent\textbf{Step 3.}
Set $\mathcal J:=\{(k,\oline l)\,:\,1\leq k\leq m\text{ and }1\leq l\leq 2n\}\subset
\mathcal I_{m,2n}\times \mathcal I_{m,2n}$. 
Every $a\in\sS(W)$ can be written as
\[
a=\sum_{S\subseteq \mathcal J}a_Sx_S
\]
where $a_S\in\sS(W_\eev)$ and $x_S:=\prod_{(i,j)\in S}x_{i,j}$ for each $S\subseteq \mathcal J$. 
Now fix $S\subseteq\mathcal J$ and set $ \sfD_S:=\prod_{(i,j)\in S}\sfD_{i,j}$, so that $\sfD_S(a)=\pm a_S$.
From \eqref{j1i1jkik} it follows that 
\begin{equation}
\label{Hbetya}
\sfh_\beta\big(
\sfD_{j_1,i_1}
\cdots\sfD_{j_k,i_k})(a_S)\big)
=0\text{ for every }k\geq0,\
i_1,\ldots,i_k,j_1,\ldots,j_k\in\{1,\ldots,m\}.
\end{equation}
But $\sS(W_\eev)\cong \sP(W_\eev^*)$, and 
therefore \eqref{Hbetya} means that the multi-variable polynomial $a_S\in\sP(W_\eev^*)$ and all of its partial derivatives vanish at a fixed point $\beta_\eev:=\beta\big|_{W_\eev}\in W_\eev^*$. Consequently,  $a_S=0$. As $S\subseteq \mathcal J$ is arbitrary, it follows that
$a=0$.
\end{proof}

\begin{rmk}
\label{rmk-v*}
A slight modification of the proof of Proposition \ref{le-dlam}
shows that for every $\lambda\in\mathrm{E}_{m,n}$ we have
$\dim V_\lambda^\g k=1$ as well. To this end, instead of $\beta\in W^*$  we use the 
$\g k$-invariant vector 
\[
\beta^*:=-\frac{1}{4}(x_{1,1}+\cdots+x_{m,m})
+\frac{1}{2}(x_{\oline 1,\oline 2}+\cdots+x_{\oline{2n-1},\oline{2n}})
\in W.
\]
\end{rmk}


\section{The eigenvalue and spherical polynomials $c_\protect\lambda$
and $d_\lambda$
}
\label{Sec-Sec5}
As in the previous section, we set $V:=\C^{m|2n}$, so that 
$\g g:=\gl(V)=\gl(m|2n)$.
Recall the decompostion of $\sP(W)$ into irreducible $\g g$-modules given in \eqref{prpSWW*} and \eqref{Emn*df}, that is,
\[
\sP(W)=\bigoplus_{\lambda\in\mathrm{E}_{m,n}^*}V_\lambda.
\]
For $\lambda\in \mathrm{E}_{m,n}^*$,
let  $D_\lambda\in\sPD(W)^\g g$ be the $\g g$-invariant differential operator as in
Definition \ref{DefDla}. 
Since the decomposition \eqref{prpSWW*} is multiplicity-free, for every $%
\mu\in\mathrm{E}_{m,n}^*$ the map 
\[
D_\lambda:V_\mu\to V_\mu
\] 
is
multiplication by  a scalar ${c}_\lambda(\mu)\in \C$. 

Fix $d\geq 0$ such that $\lambda\in\mathrm{E}_{m,n,d}^*$, so that $V_\lambda\sseq \sP^d(W)$. By Theorem \ref{prpgw} we
can choose $z_\lambda\in \mathbf{Z}(\g g)\cap\bfU^d(\g g)$ such that 
\begin{equation}
\label{choicezl}
\check\rho(z_\lambda)=D_\lambda.
\end{equation} Next choose $z_{\lambda,\g k}\in\g k\bfU(%
\g g)\cap\bfU^d(\g g)$, $z_{\lambda,\g u^+}\in\bfU(\g g)\g u^+\cap\bfU^d(\g %
g)$, and $z_{\lambda,\g a}\in\bfU(\g a)\cap\bfU^d(\g g)$ such that 
\begin{align}  \label{bfUgan}
z_\lambda=z_{\lambda,\g k}+z_{\lambda,\g u^+}+z_{\lambda,\g a},
\end{align}
according to the decomposition $\bfU(\g g)=\left( \g k\bfU(\g g) + \bfU(\g g)%
\g u^+\right)\oplus\bfU(\g a) $. For $\mu\in\g a^*$ let 
\[
\sfh_\mu:\bfU(%
\g a)\cong \sS(\g a)\to \C
\] be the extension of $\mu$ defined as in \eqref{dfheta}. 
\begin{lem}
\label{cmulZa}
$c_\lambda(\mu)=\sfh_\mu(z_{\lambda,\g a})$
for every $\lambda,\mu\in\mathrm{E}_{m,n}^*$.  In particular, $c_\lambda\in\sP(\g a^*)$.
\end{lem}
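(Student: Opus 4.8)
The plan is to evaluate the action of $z_\lambda$ on a $\g k$-spherical vector inside $V_\mu$ and read off the eigenvalue. Recall from Remark \ref{rmk-v*} that $\dim V_\mu^\g k = 1$, so fix a nonzero $v_\mu \in V_\mu^\g k$. Since $\check\rho(z_\lambda) = D_\lambda$ acts on $V_\mu \sseq \sP(W)$ by the scalar $c_\lambda(\mu)$, we have $\check\rho(z_\lambda) v_\mu = c_\lambda(\mu) v_\mu$. Now decompose $z_\lambda$ as in \eqref{bfUgan}: the term $z_{\lambda,\g k} \in \g k\bfU(\g g)$ kills $v_\mu$ because $\g k v_\mu = 0$; the term $z_{\lambda,\g u^+} \in \bfU(\g g)\g u^+$ would require $\g u^+ v_\mu = 0$, which is false in general, so this naive splitting is not quite enough and needs the refinement described below. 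The term $z_{\lambda,\g a} \in \bfU(\g a)$ acts on $v_\mu$ — but only if $v_\mu$ is an $\g a$-weight vector, which again is not automatic since $v_\mu$ is merely $\g k$-fixed, not a weight vector for $\g h = \g a \oplus \g t$.

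First I would address the weight issue. The correct object to test against is not a genuine $\g k$-spherical vector of $V_\mu$ but rather the \emph{highest weight vector} $v_\mu^+ \in V_\mu^{\g n^+}$, which has weight $\mu$ (extended trivially on $\g t$) and in particular is an $\g a$-weight vector of weight $\mu|_{\g a}$. Acting by $z_\lambda$: the term $z_{\lambda, \g u^+} \in \bfU(\g g)\g u^+$ annihilates $v_\mu^+$ since $\g u^+ \sseq \g n^+$; the term $z_{\lambda,\g a}$ acts by $\sfh_\mu(z_{\lambda,\g a})$ since $v_\mu^+$ has $\g a$-weight $\mu$; and the term $z_{\lambda, \g k} \in \g k\bfU(\g g)$ applied to $v_\mu^+$ lands in $\g k V_\mu$. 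The key point is then that $z_\lambda$ is central, hence $\check\rho(z_\lambda)$ acts on the irreducible module $V_\mu$ by the scalar $c_\lambda(\mu)$, so $\check\rho(z_\lambda) v_\mu^+ = c_\lambda(\mu) v_\mu^+$. Comparing the two expressions for $z_\lambda v_\mu^+$ gives
\[
c_\lambda(\mu) v_\mu^+ = \check\rho(z_{\lambda,\g k}) v_\mu^+ + \sfh_\mu(z_{\lambda,\g a}) v_\mu^+,
\]
so it remains to show the $\g k$-component contributes nothing to the $v_\mu^+$-coefficient. Here I would use Proposition \ref{le-dlam}/Remark \ref{rmk-v*}: pairing with the spherical functional. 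Concretely, let $\xi_\mu \in V_\mu^*$ be a nonzero $\g k$-invariant functional (which exists since $(V_\mu^*)^\g k \cong ((V_\mu)^\g k)^*$ is one-dimensional by the same argument applied to the contragredient, noting $V_\mu^* $ appears in $\sS(W)$); then $\xi_\mu(\g k V_\mu) = 0$ because $\xi_\mu$ is $\g k$-fixed and $\g k$ acts by $\langle \g k \cdot \xi_\mu, - \rangle = -\langle \xi_\mu, \g k \cdot - \rangle$. Applying $\xi_\mu$ to the displayed identity kills the $z_{\lambda,\g k}$ term, and provided $\xi_\mu(v_\mu^+) \neq 0$ we conclude $c_\lambda(\mu) = \sfh_\mu(z_{\lambda,\g a})$.

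The main obstacle is precisely verifying $\xi_\mu(v_\mu^+) \neq 0$, i.e. that the $\g k$-invariant functional does not vanish on the highest weight line. I would argue this as follows: $V_\mu = \bfU(\g g) v_\mu^+$, and by the Iwasawa-type decomposition \eqref{iwasawa} together with PBW, $V_\mu = \bfU(\g k)\bfU(\g a \oplus \g u^-) v_\mu^+$; dually, since $\xi_\mu$ is $\g k$-fixed, $\xi_\mu$ is determined by its values on $\bfU(\g a \oplus \g u^-)v_\mu^+$, and if it vanished on $v_\mu^+$ one can run the argument in the proof of Lemma \ref{lem-dimleq1} (applied to $V_\mu^*$ with its lowest weight vector $\xi_\mu$, or equivalently to $V_\mu$ with the opposite triangular decomposition) to force $\xi_\mu = 0$, a contradiction. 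Once $c_\lambda(\mu) = \sfh_\mu(z_{\lambda,\g a})$ is established, the final assertion $c_\lambda \in \sP(\g a^*)$ is immediate: for fixed $z_{\lambda,\g a} \in \bfU(\g a) \cong \sS(\g a)$, the map $\mu \mapsto \sfh_\mu(z_{\lambda,\g a})$ is by construction a polynomial function on $\g a^*$ (it is the polynomial on $\g a^*$ corresponding to $z_{\lambda,\g a} \in \sS(\g a)$), and since this polynomial agrees with $c_\lambda$ on all of $\mathrm{E}_{m,n}^* \sseq \g a^*$, we obtain $c_\lambda \in \sP(\g a^*)$.
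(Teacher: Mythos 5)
Your final argument is essentially the paper's proof: pair $\check\rho(z_\lambda)v_\mu^+$ against a nonzero $\g k$-invariant functional $v_\circ^*\in V_\mu^*\subseteq\sS(W)$ (supplied by Remark \ref{rmk-v*}), observe that $\g k$-invariance of $v_\circ^*$ kills the $z_{\lambda,\g k}$-term and that $\g u^+\subseteq\g n^+$ kills the $z_{\lambda,\g u^+}$-term on the highest weight vector, and verify $\lag v_\circ^*,v_\mu^+\rag\neq 0$. The paper obtains this last nonvanishing slightly more directly --- $V_\mu=\bfU(\g k)v_\mu^+$ by the Iwasawa decomposition and PBW, so a $\g k$-invariant functional vanishing on $v_\mu^+$ would vanish on all of $V_\mu$ --- which is the same idea as your dual version via Lemma \ref{lem-dimleq1}; the only caveat is that your aside $(V_\mu^*)^{\g k}\cong((V_\mu)^{\g k})^*$ is not automatic in the super setting, but you do not need it, since Remark \ref{rmk-v*} applies directly to $V_\mu^*\cong V_{\mu^*}\subseteq\sS(W)$.
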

\begin{proof}
Let $v_\mu$ denote a highest weight vector of 
$V_\mu\subseteq\sP(W)$.
By Remark \ref{rmk-v*}, the contragredient module $V_\mu^*\subseteq\sS(W)$ contains a non-zero $\g k$-invariant vector 
$v_\circ^*$.  From the decomposition $\g g=\g k\oplus\g a\oplus\g u^+$ and the PBW Theorem it follows that $V_\mu=\bfU(\g k)v_\mu$. This means that if $\lag v_\circ^*,v_\mu\rag=0$, then
$\lag v_\circ^*,V_\mu\rag=0$, which is a contradiction. Thus $\lag v_\circ^*,v_\mu\rag \neq 0$, and
\begin{align*}
c_\lambda(\mu)\lag v_\circ^*,v_\mu\rag
 &
=\lag v_\circ^*,D_\lambda v_\mu\rag  
 =
 \lag v_\circ^*,\check\rho(z_\lambda)v_\mu\rag\\
& =
 \lag v_\circ^*, 
 (\check\rho(z_{\lambda,\g k})+
 \check\rho(z_{\lambda,\g u^+})+
 \check\rho(z_{\lambda,\g a}) 
 )v_\mu
 \rag
 =\lag
 v_\circ^*,\check\rho(z_{\lambda,\g a})v_\mu
\rag=\sfh_\mu(z_{\lambda,\g a})\lag v_\circ^*,v_\mu\rag.
\end{align*}
It follows that $c_\lambda(\mu)=\sfh_\mu(z_{\lambda,\g a})$.
\end{proof}
\begin{dfn}
\label{dfcl}
For every $\lambda\in\mathrm{E}_{m,n}^*$, the polynomial
$c_\lambda\in\sP(\g a^*)$ is called
the \emph{eigenvalue polynomial} 
associated to $\lambda$.
\end{dfn}
\begin{rmk}
\label{rmkdgclam}
Note that by Theorem
\ref{prpgw},
if $\lambda\in\mathrm{E}_{m,n,d}^*$ then $\deg(c_\lambda)\leq d$.
\end{rmk}

\begin{lem}
\label{lem-clmuvanish}
Let $\lambda\in\mathrm{E}_{m,n,d}^*$. Then 
$c_\lambda(\lambda)=d!$
and
 $ c_\lambda(\mu)=0$ for all other
$\mu$ in $\bigcup_{d'=0}^d \mathrm{E}_{m,n,d'}^*$.
\end{lem}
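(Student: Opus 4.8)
The plan is to read off $c_\lambda(\mu)$ from the explicit shape of $D_\lambda$. By Definition \ref{DefDla} and the isomorphisms \eqref{spdgCB}, we may write $D_\lambda=\sfm(\xi_\lambda)$, where $\xi_\lambda\in(V_\lambda\otimes V_\lambda^*)^{\g g}$ is the canonical element corresponding to $1_{V_\lambda}$ under $\mathrm{End}_\C(V_\lambda)\cong V_\lambda\otimes V_\lambda^*$; here $V_\lambda\subseteq\sP^d(W)$ and, as recorded in the proof of Proposition \ref{le-dlam}, $V_\lambda^*\subseteq\sS^d(W)$. Fixing a homogeneous basis $\{v_i\}$ of $V_\lambda$ and letting $\{b_i\}\subseteq V_\lambda^*\subseteq\sS^d(W)$ be the basis of $V_\lambda^*$ dual to $\{v_i\}$ with respect to the canonical pairing $\lag\,,\,\rag\colon\sP^d(W)\times\sS^d(W)\to\C$ implicit in \eqref{spdgCB}, we get, up to Koszul signs, $\xi_\lambda=\sum_i v_i\otimes b_i$, hence $D_\lambda=\sum_i\pm\,v_i\,\partial_{b_i}$, a finite sum with all $b_i\in\sS^d(W)$.

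The lower-degree vanishing is then immediate: if $d'<d$ and $f\in\sP^{d'}(W)$, then $\partial_{b_i}(f)\in\sP^{d'-d}(W)=\{0\}$, so $D_\lambda$ annihilates $\sP^{d'}(W)$; since $V_\mu\subseteq\sP^{d'}(W)$ for $\mu\in\mathrm{E}_{m,n,d'}^*$, this gives $c_\lambda(\mu)=0$ for every such $\mu$ of degree $<d$. For the remaining $\mu$ in degree $d$, I would use that $\mathrm{image}(D_\lambda)\subseteq\mathrm{Span}_\C\{v_i\}=V_\lambda$. Since $D_\lambda$ is $\g g$-invariant and \eqref{prpSWW*} is multiplicity-free, $D_\lambda$ acts on $V_\mu$ as the scalar $c_\lambda(\mu)$, so if $\mu\in\mathrm{E}_{m,n,d}^*$ and $\mu\neq\lambda$ then $c_\lambda(\mu)\,V_\mu=D_\lambda(V_\mu)\subseteq V_\lambda\cap V_\mu=\{0\}$, whence $c_\lambda(\mu)=0$.

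It remains to compute $c_\lambda(\lambda)$, which I would do by applying $D_\lambda$ to a single $v_j\in V_\lambda$: $D_\lambda(v_j)=\sum_i\pm\,v_i\,\partial_{b_i}(v_j)$. The key point is the identity $\partial_b(a)=d!\,\lag a,b\rag$ for $a\in\sP^d(W)$ and $b\in\sS^d(W)$, where $\lag\,,\,\rag$ is the pairing fixed above; it follows by iterating the Leibniz rule \eqref{pww*} (there are exactly $d!$ ways to match the $d$ derivations making up $\partial_b$ against the $d$ linear factors of $a$), the factor $d!$ being precisely the normalization discrepancy between $\partial$ and the symmetrization map $\ssym^d_W$. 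Since $\lag v_j,b_i\rag=\delta_{ij}$ by choice of $\{b_i\}$, a short sign check gives $D_\lambda(v_j)=d!\,v_j$, i.e. $c_\lambda(\lambda)=d!$.

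The main obstacle is not conceptual but a careful normalization and sign audit: one must make explicit which $\g g$-invariant perfect pairing $\sP^d(W)\times\sS^d(W)\to\C$ underlies the identification $\sS(W)\cong\sP(W)^*$ used in \eqref{spdgCB}, verify $\partial_b(a)=d!\,\lag a,b\rag$ for that pairing via \eqref{pww*}, \eqref{eq-dfxijyij} and \eqref{pari-j}, and track the Koszul signs coming from $\mathrm{End}_\C(V_\lambda)\cong V_\lambda\otimes V_\lambda^*$ and from $\sfm$, so as to confirm that $\{b_i\}$ is literally the dual basis and that the residual sign in $D_\lambda(v_j)=d!\,v_j$ is $+1$. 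Everything else — the degree count and the multiplicity-one argument — is routine.
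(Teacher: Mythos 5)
Your proposal is correct and follows essentially the same route as the paper: the order-$d$ structure of $D_\lambda=\sfm\bigl(\sum_i v_i\otimes b_i\bigr)$ kills $\sP^{d'}(W)$ for $d'<d$, the degree-$d$ off-diagonal vanishing comes from the multiplicity-free decomposition (you phrase it via image containment in $V_\lambda$, the paper via Schur's lemma applied to $p\mapsto D_\lambda p$ — the same fact), and the normalization $c_\lambda(\lambda)=d!$ comes from the identity $\partial_b(a)=d!\,\lag a,b\rag$ on $\sP^d(W)\times\sS^d(W)$, which is exactly the bilinear form the paper introduces. The sign/normalization audit you flag is also left as "a straightforward calculation" in the paper, so there is no gap.
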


\begin{proof}
Note that $D_\lambda\in\sfm(V_\lambda\otimes V_\lambda^*)\subseteq\sfm\left(\sP^d(W)\otimes\sS^d(W)\right)$
where $\sfm$ is the map defined in \eqref{eq-sfm:}. Thus if $d'<d$ then   $D_\lambda\sP^{d'}(W)=\{0\}$.
Next assume $d'=d$. Then
 the map
\begin{equation}
\label{sPDPWPW-}
\sPD(W)\otimes \sP(W)\to\sP(W)
\ ,\ D\otimes p\mapsto Dp 
\end{equation}
is $\g g$-equivariant, and since
$D_\lambda\in\sfm(V_\lambda\otimes V_\lambda^*)$, the map \eqref{sPDPWPW-} restricts to a $\g g$-equivariant map $V_\mu\to V_\lambda$ given by $p\mapsto D_\lambda p$. By Schur's Lemma, when $\lambda\neq \mu$, the latter map should be zero. 

Finally, to prove that 
$\mathbf c_\lambda(\lambda)=d!$, we consider the bilinear form
\begin{equation}
\label{bilformbb}
\boldsymbol\beta:\sP^d(W)\times \sS^d(W)\to\C\
,\  
\boldsymbol\beta(a,b):=\partial_ba.
\end{equation}
By a straightforward calculation one can verify that   $\boldsymbol\beta(a,b)=d!\lag a,b\rag$, where $\lag\cdot,\cdot\rag$ denotes the duality pairing 
between $\sP^d(W)\cong\sS^d(W)^*$ and $\sS^d(W)$.
Next we choose a basis $v_1,\ldots,v_t$  
for $V_\lambda$. Let $v_1^*,\ldots,v_t^*$ be the corresponding dual basis of $V_\lambda^*$. 
From \eqref{bilformbb} it follows that
\[
D_\lambda v_k=\sfm(\sum_{l=1}^t v_l\otimes v_l^*)v_k=\sum_{i=1}^tv_l\partial_{v_l^*}(v_k)=
d! v_k
\] for every $1\leq k\leq t$.
\end{proof}

Let $\beta^*\in W$ be the $\g k$-invariant vector that is given in Remark %
\ref{rmk-v*}, and set 
\begin{equation*}
\iota_\g a:\g a\to W\ ,\ \iota_\g a(h):=\rho(h)\beta^*. 
\end{equation*}
By duality, the map $\iota_\g a$ results in a homomorphism of superalgebras 
\begin{equation}
\label{dfiotaa*}
\iota_\g a^*:\sP(W)\cong\sS(W^*)\to\sS(\g a^*)\cong \sP(\g a), 
\end{equation}
which is defined uniquely by the relation $\lag \iota^*_\g a (w^*),h\rag =%
\lag w^*,\iota_\g a(h)\rag
$ for $w^*\in W^*$, $h\in\g a$. 

Let $\kappa$ be the supertrace form on $\g g$ defined as in \eqref{kappa}. The restriction of $\kappa$ to $\g a$ is a non-degenerate symmetric
bilinear form and yields a canonical isomorphism 
$\sfj:\g a^*\to\g a$ which is defined as follows.
For every $\xi\in\g a^*$,
\begin{equation*}
\sfj(\xi):=x_\xi\text{ if and only if }\kappa(\,\cdot\,,x_\xi)=\xi .
\end{equation*}
Let $\sfj^*:\sP(\g a)\to\sP(\g a^*)$ be defined by $\sfj^*(p):=p\circ\sfj$ for every $p\in\sP(\g a)$.
For $\lambda\in\mathrm{E}_{m,n}^*$, let $\mathbf{d}_\lambda\in\sP(W)^\g k$ be the 
$\g k$-invariant vector in $V_\lambda$ as in Proposition \ref{le-dlam}.
\begin{dfn}
\label{dfbr}
For  $\lambda\in\mathrm{E}_{m,n,d}^*$,
we define the \emph{spherical polynomial} $d_\lambda$ to be
\begin{equation*}  
d_\lambda:=\sfj^*\circ\iota^*_\g a(\mathbf{d}_\lambda)\in\sP^d(\g a^*).
\end{equation*}
\end{dfn}

\begin{prp}
\label{DGBRVVV}
The restriction of $\iota_\g a^*$ to $\sP(W)^\g k$ is an injection.
In particular, $d_\lambda\neq 0$ for
every $\lambda$.
\end{prp}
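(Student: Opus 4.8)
The plan is to read off $\iota_{\g a}^{*}$ as the operation of restricting (super)polynomial functions to a linear ``Cartan slice'' inside $W$, and then to show that the $\g k$-orbit of that slice is dense in $W$ --- not merely in the classical sense that its body fills the even part, but also transversally in the odd directions, the latter being the substitute for the missing $KAK$ decomposition. The first step is to make $\iota_{\g a}$ explicit. Since $\beta^{*}$ is a $\g t$-invariant weight vector and the generators $x_{k,k}$, $x_{\oline{2l-1},\oline{2l}}$ occurring in it are $\g a$-weight vectors (of weights $2\gamma_{k}$ and $2\gamma_{\oline l}$), a direct computation with \eqref{polrhoEi-j} shows that $\iota_{\g a}(h)=\rho(h)\beta^{*}$ is a linear isomorphism of $\g a$ onto the $(m+n)$-dimensional subspace
\[
\g a_{W}:=\mathrm{Span}_\C\big\{x_{k,k}\ (1\le k\le m)\big\}\oplus\mathrm{Span}_\C\big\{x_{\oline{2l-1},\oline{2l}}\ (1\le l\le n)\big\}\subseteq W_{\eev}.
\]
Under this identification $\iota_{\g a}^{*}\colon\sP(W)\to\sP(\g a)$ is the algebra homomorphism induced by restricting linear functionals from $W$ to $\g a_{W}$, and its kernel is the ideal generated by those $w^{*}\in W^{*}$ vanishing on $\g a_{W}$; in particular $\iota_{\g a}^{*}$ annihilates all of $W_{\ood}^{*}$.

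Next, the classical input. The even subalgebra $\g k_{\eev}=\g o(m)\oplus\g{sp}(2n)$ is the Lie algebra of the reductive group $K_{\eev}:=\mathrm O(m,\C)\times\mathrm{Sp}(2n,\C)$, and as a $K_{\eev}$-module $W_{\eev}\cong\sS^{2}(\C^{m})\oplus\Lambda^{2}(\C^{2n})$, where $\g a_{W}$ is the span of the diagonal symmetric matrices in the first summand and of $\bfe_{\oline{2l-1}}\wedge\bfe_{\oline{2l}}$ $(1\le l\le n)$ in the second. Orthogonal ``diagonalization'' of a generic complex symmetric matrix, together with the analogous normal form for the isotropy representation of $\mathrm{Sp}(2n,\C)$ on $\Lambda^{2}(\C^{2n})$, shows that $K_{\eev}\cdot\g a_{W}$ is Zariski-dense in $W_{\eev}$; consequently any $\g k_{\eev}$-invariant polynomial on $W_{\eev}$ vanishing on $\g a_{W}$ is identically zero.

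The heart of the matter is to promote this to the super setting. Grade $\sP(W)=\bigoplus_{b\ge 0}\sP^{(b)}(W)$ by fermionic degree $b$ (the degree in $\Lambda(W_{\ood}^{*})$); the action of $\g k_{\eev}$ preserves this grading, while for odd $x\in\g k_{\ood}$ the operator $\check\rho(x)$ splits as $\check\rho(x)=\check\rho(x)^{+}+\check\rho(x)^{-}$, where $\check\rho(x)^{\pm}$ changes $b$ by $\pm1$, $\check\rho(x)^{-}$ being the odd superderivation of $\sP(W)$ that vanishes on $W_{\eev}^{*}$ and sends $\theta\mapsto\check\rho(x)\theta\in W_{\eev}^{*}$ for $\theta\in W_{\ood}^{*}$. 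Now take $p\in\sP(W)^{\g k}$ with $\iota_{\g a}^{*}(p)=0$ and write $p=\sum_{b}p^{(b)}$. Each $p^{(b)}$ is $\g k_{\eev}$-invariant, and since $\g a_{W}\subseteq W_{\eev}$ we have $\iota_{\g a}^{*}(p)=\iota_{\g a}^{*}(p^{(0)})=p^{(0)}\big|_{\g a_{W}}$, so $p^{(0)}=0$ by the previous step. For $b\ge1$ I would induct: extracting the fermionic-degree-$(b-1)$ component of $\check\rho(x)p=0$ gives $\check\rho(x)^{-}p^{(b)}=-\check\rho(x)^{+}p^{(b-2)}$ (with $p^{(-1)}:=0$), so once $p^{(b-2)}=0$ the element $p^{(b)}$ lies in the common kernel of all lowering operators $\check\rho(x)^{-}$, $x\in\g k_{\ood}$. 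The one genuinely new computation is that this common kernel is trivial in positive fermionic degree: evaluating the explicit generators of $\g k_{\ood}$ from Remark \ref{rmkbasisK} at a generic point $w_{0}\in\g a_{W}$ shows that $x\mapsto\rho(x)w_{0}$ is a linear isomorphism $\g k_{\ood}\xrightarrow{\sim}W_{\ood}$ (both of dimension $2mn$), hence no nonzero element of $W_{\ood}^{*}$ is annihilated by all the maps $\theta\mapsto\check\rho(x)\theta$; a short argument reducing to the case $b=1$ (using that $\sP(W_{\eev})$ is an integral domain) then forces $p^{(b)}=0$. Running the induction over even and odd $b$ separately yields $p=0$. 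The final assertion follows: $\mathbf d_{\lambda}\neq0$ by Proposition \ref{le-dlam} and $\sfj^{*}$ is an isomorphism, so $d_{\lambda}=\sfj^{*}(\iota_{\g a}^{*}(\mathbf d_{\lambda}))\neq0$.

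The main obstacle is exactly this third step. In the purely even case one argues globally --- $p$ is $K$-invariant, $K\cdot\g a_{W}$ is dense, done --- but in the super case there is no supergroup to integrate $\g k_{\ood}$ against in the naive sense, and, crucially, vanishing on $\g a_{W}\subseteq W_{\eev}$ says nothing by itself about the odd variables, so $\g k_{\eev}$-invariance alone is far from sufficient. What makes the argument go through is the infinitesimal statement $\check\rho(\g k)w_{0}+\g a_{W}=W$ at a generic $w_{0}\in\g a_{W}$ (equivalently, that $(g,h)\mapsto g\cdot\iota_{\g a}(h)$ is submersive there); its even half is classical, and its odd half $\check\rho(\g k_{\ood})w_{0}=W_{\ood}$, verified by the explicit calculation with the basis of $\g k_{\ood}$, is the new ingredient.
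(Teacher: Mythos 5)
Your argument is correct, and it takes a genuinely different route from the one in Appendix \ref{sec-pflem}. The paper works entirely infinitesimally and uniformly in the even and odd directions: it shows by induction on $s$ that $\mathbf d$ and all of its iterated partial derivatives $\partial_{i_1,j_1}\cdots\partial_{i_s,j_s}\mathbf d$ in the off-diagonal directions $(i,j)\in\mathcal I''$ vanish at every point $\xi_{\bfx}$ of the Cartan slice, by applying products $\check\rho(x_1)\cdots\check\rho(x_{s+1})$ of spanning elements of $\g k$ to $\mathbf d$, evaluating, and exploiting Zariski density in $\g a$ to divide out the polynomial coefficient $\psi(\bfx)$ that appears; a Taylor-expansion argument in the off-diagonal variables then finishes. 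You instead grade $\sP(W)=\sS(W_\eev^*)\otimes\Lambda(W_\ood^*)$ by fermionic degree, dispose of the bottom layer by quoting the classical density of $\big(\mathrm{O}(m,\C)\times\mathrm{Sp}(2n,\C)\big)\cdot\iota_{\g a}(\g a)$ in $W_\eev\cong\sS^2(\C^m)\oplus\Lambda^2(\C^{2n})$, and kill the higher layers by an induction whose engine is the odd transversality statement $\rho(\g k_\ood)w_0=W_\ood$ at generic $w_0\in\iota_\g a(\g a)$ --- a computation I checked against the spanning set of Remark \ref{rmkbasisK}: the relevant coefficients are of the form $b_l\pm 2a_k$, generically nonzero, and the dimensions $\dim\g k_\ood=\dim W_\ood=2mn$ match. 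Your approach buys a cleaner conceptual picture (it isolates exactly what replaces the $KAK$ decomposition, namely even orbit density plus odd infinitesimal transversality) and yields the extra fact that $\sP(W)^{\g k}$ is concentrated in even fermionic degree, since your induction in odd degrees starts unconditionally from $p^{(-1)}=0$; the paper's proof buys self-containedness --- it never invokes group orbits or classical invariant theory, and it sidesteps the need to separate the even off-diagonal directions (which your degree-zero step handles via diagonalization) from the odd ones. The only places where you should add detail in a final write-up are the passage from ``$\check\rho(x)^-p^{(b)}=0$ for all $x\in\g k_\ood$'' to ``$p^{(b)}=0$'' (extract the coefficient of each $\theta_J$ with $|J|=b-1$, evaluate at a generic $w_0$ where $\rho(\cdot)w_0$ is onto $W_\ood$, and use that evaluation functionals at points of $W_\ood$ separate $W_\ood^*$, then conclude by Zariski density in $W_\eev$) and the verification that $p^{(0)}$ is invariant for the classical action of $\g k_\eev$ on $\sP(W_\eev)$ (the odd-derivative terms of $\check\rho(y)$, $y\in\g k_\eev$, annihilate $\sS(W_\eev^*)$). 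Neither is a gap.
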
\begin{proof}
The proof of Proposition \ref{DGBRVVV} 
will be given in Section \ref{sec-pflem}. 
\end{proof}

Recall the definitions of $\widetilde\sfh_\beta$ and $z_{\lambda,\g a}$ from
\eqref{tildhbeta} and \eqref{bfUgan}.
\begin{lem}
\label{lemZnZk=0}
Let $\lambda\in\mathrm{E}^*_{m,n,d}$, and let
$z_{\lambda,\g a}$ be defined as in \eqref{bfUgan}.
Then  
\[d_\lambda=\big(\sfj^*\circ\iota^*_\g a\circ\widetilde\sfh_\beta\circ\widehat \sfs_d\big)(\check\rho(z_{\lambda,\g a})).
\]
\end{lem}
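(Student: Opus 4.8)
The plan is to unwind the definition of $d_\lambda$ and to show that, of the three summands of $z_\lambda$ in \eqref{bfUgan}, only $z_{\lambda,\g a}$ survives the composite $\sfj^*\circ\iota^*_\g a\circ\widetilde\sfh_\beta\circ\widehat\sfs_d\circ\check\rho$. First I would observe that $D_\lambda$ corresponds under \eqref{spdgCB} to $1_{V_\lambda}\in V_\lambda\otimes V_\lambda^*$ with $V_\lambda\sseq\sP^d(W)$ and $V_\lambda^*\sseq\sS^d(W)$, so $\sfm^{-1}(D_\lambda)\in\sP^d(W)\otimes\sS^d(W)$ and hence $\widehat\sfs_d(D_\lambda)=\sfm^{-1}(D_\lambda)$ by \eqref{eq-shmphh}. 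Combining this with \eqref{choicezl} and Proposition \ref{le-dlam} gives $\mathbf d_\lambda=\big(\widetilde\sfh_\beta\circ\widehat\sfs_d\big)(\check\rho(z_\lambda))$, so that by Definition \ref{dfbr},
\[
d_\lambda=\big(\sfj^*\circ\iota^*_\g a\circ\widetilde\sfh_\beta\circ\widehat\sfs_d\big)(\check\rho(z_\lambda)).
\]
Since $z_{\lambda,\g k},z_{\lambda,\g u^+},z_{\lambda,\g a}\in\bfU^d(\g g)$, all three of $\check\rho(z_{\lambda,\g k}),\check\rho(z_{\lambda,\g u^+}),\check\rho(z_{\lambda,\g a})$ lie in $\sPD^d(W)$, and $\widehat\sfs_d$ is linear; hence it suffices to prove that $\iota^*_\g a\big(\widetilde\sfh_\beta(\widehat\sfs_d(\check\rho(z_{\lambda,\g k})))\big)=0$ and $\iota^*_\g a\big(\widetilde\sfh_\beta(\widehat\sfs_d(\check\rho(z_{\lambda,\g u^+})))\big)=0$.

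The computational device I would isolate is the identity
\[
\widetilde\sfh_\beta\big(\widehat\sfs_1(\check\rho(x))\big)=\check\rho(x)\beta\in\sP^1(W)=W^*\qquad\text{for every }x\in\g g.
\]
By linearity it is enough to verify it for $x=E_{i,j}$: here \eqref{sfmPHIEIJ} gives $\widehat\sfs_1(\check\rho(E_{i,j}))=-\varphi_{i,j}$, and applying $\widetilde\sfh_\beta$ and using that $\sfh_\beta$ restricts on $\sS^1(W)=W$ to $x_{r,i}\mapsto\beta(\bfe_r,\bfe_i)$—which vanishes unless $|r|=|i|$—the identity becomes a short sign check against \eqref{ppooll}. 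I would also record that $\widetilde\sfh_\beta=\id_{\sP(W)}\otimes\sfh_\beta$ and $\sfj^*\circ\iota^*_\g a$ are multiplicative (the former since $\sfh_\beta$ is an algebra homomorphism vanishing on $\sS(W)_\ood$), while the symbol map is multiplicative by \eqref{whsD1Dk}. By the PBW theorem applied to an ordered homogeneous basis of $\g g$ adapted to $\g g=\g k\oplus\g a\oplus\g u^+$, one has $z_{\lambda,\g k}\in\g k\,\bfU^{d-1}(\g g)$ and $z_{\lambda,\g u^+}\in\bfU^{d-1}(\g g)\,\g u^+$; writing $z_{\lambda,\g k}=\sum_i x_iu_i$ and $z_{\lambda,\g u^+}=\sum_j v_jy_j$ with $x_i\in\g k$, $y_j\in\g u^+$, $u_i,v_j\in\bfU^{d-1}(\g g)$, the two multiplicativity statements and the displayed identity give
\[
\widetilde\sfh_\beta\big(\widehat\sfs_d(\check\rho(z_{\lambda,\g k}))\big)=\sum_i(\check\rho(x_i)\beta)\cdot\widetilde\sfh_\beta\big(\widehat\sfs_{d-1}(\check\rho(u_i))\big)=0
\]
because $\check\rho(x_i)\beta=0$ for $x_i\in\g k$ by the very definition of $\g k$; similarly $\widetilde\sfh_\beta\big(\widehat\sfs_d(\check\rho(z_{\lambda,\g u^+}))\big)=\sum_j\widetilde\sfh_\beta\big(\widehat\sfs_{d-1}(\check\rho(v_j))\big)\cdot(\check\rho(y_j)\beta)$, and applying the homomorphism $\iota^*_\g a$ reduces the remaining vanishing to the claim $\iota^*_\g a(\check\rho(y)\beta)=0$ for every $y\in\g u^+$.

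This last claim is the heart of the matter, and the only point where the structure of the symmetric superpair is really used. I would prove it by pairing against the subspace $\iota_\g a(\g a)\sseq W$: for $y\in\g u^+$ and $h\in\g a$, the contragredient action together with $|\beta|=\eev$ gives $\lag\check\rho(y)\beta,\iota_\g a(h)\rag=-\lag\beta,\rho(y)\rho(h)\beta^*\rag$. A direct computation from \eqref{polrhoEi-j} and \eqref{pari-j} shows $\rho(E_{k,k})\beta^*=-\tfrac12 x_{k,k}$ and $\rho(E_{\oline{2l-1},\oline{2l-1}}+E_{\oline{2l},\oline{2l}})\beta^*=x_{\oline{2l-1},\oline{2l}}$, so $\rho(h)\beta^*$ lies in the span of the ``diagonal'' vectors $\{x_{k,k}:1\le k\le m\}\cup\{x_{\oline{2l-1},\oline{2l}}:1\le l\le n\}$; a second short computation using \eqref{polrhoEi-j}, \eqref{pari-j} and \eqref{beiej=c} shows that $\lag\beta,\rho(E_{i,j})w\rag=0$ whenever $i\ne j$ and $w$ is one of these diagonal vectors. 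Since $\g u^+\sseq\g n^+$ is a sum of root spaces $\C E_{i,j}$ with $i\ne j$, it follows that $\lag\check\rho(y)\beta,\iota_\g a(h)\rag=0$ for all $h\in\g a$, hence $\iota^*_\g a(\check\rho(y)\beta)=0$. Assembling the pieces yields $d_\lambda=\big(\sfj^*\circ\iota^*_\g a\circ\widetilde\sfh_\beta\circ\widehat\sfs_d\big)(\check\rho(z_{\lambda,\g a}))$. The routine parts are the sign check in the key identity, the PBW reduction, and the two small $\rho$-computations; the genuine obstacle is spotting and verifying $\iota^*_\g a(\check\rho(\g u^+)\beta)=0$, which expresses that $\g u^+$ is invisible to the radial-part map built from $\beta^*$ and $\g a$.
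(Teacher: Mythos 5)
Your proof is correct, and while it follows the same overall skeleton as the paper's argument (express $d_\lambda$ through $z_\lambda$ via \eqref{formdlda}, then use multiplicativity of $\widehat\sfs_d$, $\widetilde\sfh_\beta$ and $\iota_\g a^*$ to reduce the vanishing of the $\g k\bfU(\g g)$- and $\bfU(\g g)\g u^+$-contributions to degree-one computations), your two degree-one computations are genuinely different from, and in places cleaner than, the paper's. For the $\g k$-part, the paper verifies $\big(\iota_\g a^*\circ\widetilde\sfh_\beta\circ\widehat\sfs_1\big)(\check\rho(x_1))=0$ case by case against the spanning set of Remark \ref{rmkbasisK}; your identity $\widetilde\sfh_\beta\big(\widehat\sfs_1(\check\rho(x))\big)=\check\rho(x)\beta$ makes this vanishing tautological from the definition $\g k=\{x:\check\rho(x)\beta=0\}$, without invoking $\iota_\g a^*$ at all. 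For the $\g u^+$-part, the paper computes $\big(\iota_\g a^*\circ\widetilde\sfh_\beta\circ\widehat\sfs_1\big)(\check\rho(E_{i,j}))$ explicitly and then appeals to an explicit spanning set of $\g u^+$ (which, as printed, omits the odd root vectors $E_{k,\oline{l}}$, though these cause no trouble); your dual-pairing argument $\lag\check\rho(y)\beta,\iota_\g a(h)\rag=-\lag\beta,\rho(y)\rho(h)\beta^*\rag$ needs only that $\g u^+$ is spanned by off-diagonal $E_{i,j}$, which is more robust. What your route buys is conceptual transparency (the $\g k$-invisibility is definitional, the $\g u^+$-invisibility is a statement about $\beta$ and the diagonal vectors $\rho(\g a)\beta^*$); what the paper's route buys is the explicit formula for $\big(\iota_\g a^*\circ\widetilde\sfh_\beta\circ\widehat\sfs_1\big)(\check\rho(E_{i,j}))$, which is reused in the proof of Theorem \ref{MAINTHM} via \eqref{S1cRho}.

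One small imprecision: PBW gives $z_{\lambda,\g k}\in\g k\,\bfU^{d-1}(\g g)+\bfU^{d-1}(\g g)$ rather than $z_{\lambda,\g k}\in\g k\,\bfU^{d-1}(\g g)$ on the nose (reordering a monomial to bring its $\g k$-factor to the front produces lower-order correction terms). This is harmless since $\widehat\sfs_d$ annihilates $\sPD^{d-1}(W)$ --- the paper makes this explicit with its decomposition $x=x^\circ+x^-$ --- but you should say so.
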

\begin{proof}
Let $D_\lambda$ be the $\g g$-invariant differential operator as in Definition \ref{DefDla}. Since we have $D_\lambda\in\sfm(\sP^d(W)\otimes\sS^d(W))$, from \eqref{eq-shmphh} we obtain
\begin{equation}
\label{formdlda}
\mathbf d_\lambda=
\widetilde\sfh_\beta(\sfm^{-1}(D_\lambda))
=
\widetilde\sfh_\beta(\widehat\sfs_d(D_\lambda))
.\end{equation}
From \eqref{formdlda} it follows that
\[
d_\lambda=\big(\sfj^*\circ\iota^*_\g a\circ\widetilde\sfh_\beta\circ\widehat\sfs_d\big)(\check\rho(z_{\lambda})).
\]
By the decomposition \eqref{bfUgan} 
and the fact that 
$z_{\lambda,\g k},z_{\lambda,\g u^+},z_{\lambda,\g a}\in\bfU^d(\g g)$,
it is enough to prove that 
if $x\in\g k\bfU^{d-1}(\g g)$ or
$x\in\bfU^{d-1}(\g g)\g u^+$, then  
\begin{equation}
\label{iotac1P}
\big(
\iota_\g a^*
\circ
\widetilde\sfh_\beta\circ \widehat \sfs_d\big)(\check\rho(x))
=0.
\end{equation}
First we prove  \eqref{iotac1P} for $x\in\g k\bfU^{d-1}(\g g)$.
By the PBW Theorem there exist $x^\circ,x^-\in\bfU(\g g)$ such that
such that
\begin{itemize}
\item[(i)] $x=x^\circ+x^-$,
\item[(ii)] $x^\circ$ is a sum of monomials of the form $x_1\cdots x_d$ where 
$x_1\in\g k$ and 
$x_2,\ldots,x_d\in\g g$,
\item[(iii)]  $x^{-}\in \bfU^{d-1}(\g g)$.
\end{itemize}
Recall that $\mathrm{ord}(D)$ denotes the order of a differential operator $D\in\sPD(W)$.
Since we have $\mathrm{ord}(\check\rho(x))\leq 1$ for every $x\in \g g$,
from \eqref{whsD1Dk} it follows that
$\widehat\sfs_d(\check\rho(x^-))=0$ 
and
\begin{equation}
\label{wsdrhoc}
\widehat\sfs_d
(\check\rho(x_1\cdots x_d))
=
\widehat\sfs_1(\check\rho(x_1))\cdots
\widehat\sfs_1(\check\rho(x_d)).
\end{equation}
Since $\iota_\g a^*$ and $\widetilde\sfh_\beta$ are
homomorphisms of superalgebras, 
from \eqref{wsdrhoc} it follows that 
in order to prove 
\eqref{iotac1P}, it is enough to verify that
\begin{equation}
\label{pWWcsig1}
\big(
\iota_\g a^*\circ\widetilde\sfh_\beta\circ\widehat\sfs_1
\big)
(\check\rho(x_1))=0
\text{ for every }x_1\in\g k.
\end{equation}
To verify \eqref{pWWcsig1}, we use 
the generators $\{x_{i,j}\,:\,i,j\in\mathcal I_{m,2n}\}$
and
$\{y_{i,j}\,:\,i,j\in\mathcal I_{m,2n}\}$
defined as in \eqref{eq-dfxijyij}.
From \eqref{ppooll} it follows that
\begin{equation}
\label{S1cRho}
\widehat\sfs_1(\check\rho(E_{i,j}))=
-(-1)^{|i|\cdot|j|}
\sum_{r\in\mathcal I_{m,2n}}
(-1)^{|r|}y_{r,j}x_{r,i}\,
\text{ for }i,j\in\mathcal I_{m,2n},
\end{equation}
and consequently,
\begin{equation*}
\big(
\iota_\g a^*\circ\widetilde\sfh_\beta\circ\widehat\sfs_1
\big)
(\check\rho(E_{i,j}))
=
\begin{cases}
-\iota_\g a^*(y_{i,j}) & \text{ if }|i|=|j|=\eev,\\
\iota_\g a^*(y_{\oline{2k},j}) &
\text{ if }
i=\oline{2k-1}\text{ for }1\leq k\leq n,\text{ and }|j|=\ood,\\
-\iota_\g a^*(y_{\oline{2k-1},j}) &
\text{ if }
i=\oline{2k}\text{ for }1\leq k\leq n,\text{ and }|j|=\ood,\\
0& \text{ if }|i|\neq |j|.
\end{cases}
\end{equation*}
If $x_1\in\g k_\ood$, then $x_1$ is a linear combination of the $E_{i,j}$ such that $|i|\neq |j|$, and therefore 
$\big(\iota_\g a^*\circ\widetilde\sfh_\beta\circ\widehat\sfs_1
\big)
(\check\rho(x_1))=0$. If $x_1\in\g k_\eev$, then 
$x_1$ is a linear combination of elements of the 
cases (i)--(iv) in Remark \ref{rmkbasisK},
and in each case, we can verify  that 
$\big(\iota_\g a^*\circ\widetilde\sfh_\beta\circ\widehat\sfs_1
\big)
(\check\rho(x_1))=0$.

The proof of \eqref{iotac1P} for $x\in\bfU(\g g)^{d-1}\g u^+$ is similar. 
As in the case $x\in\g k\bfU(\g g)$, the proof can be reduced to showing that 
\begin{equation}
\label{pWWcsig111}
\big(
\iota_\g a^*\circ\widetilde\sfh_\beta\circ\widehat\sfs_1
\big)
(\check\rho(x))=0
\text{ for every }x\in\g u^+.
\end{equation}
It is easy to verify directly that $\g u^+$ is
spanned by the $E_{i,j}$ for $i,j$ satisfying at least one of the following conditions.
\begin{itemize}
\item[(i)] $1\leq i<j\leq m$, 
\item[(ii)] $i=\oline k$ and $j=\oline l$ where 
$1\leq k< l\leq 2n$ and 
$
(k,l)\not\in\{(2t-1,2t)\,:\,1\leq t\leq n\}
$. 
\end{itemize}
Therefore \eqref{pWWcsig111} follows from the above calculation of 
$\big(\iota_\g a^*\circ\widetilde\sfh_\beta\circ\widehat\sfs_1
\big)
(\check\rho(E_{i,j}))$, together with the fact that
$\iota_\g a^*(y_{i,j})=0$ unless 
$i=j\in\{1,\ldots,m\}$, or $i=\oline{2k-1}$ and $j=\oline{2k}$ where $1\leq k\leq n$, or 
$i=\oline{2k}$ and $j=\oline{2k-1}$ where $1\leq k\leq n$.
\end{proof}

For any polynomial $p\in \sP(\g a^*)$, we write $\oline p$ for the homogeneous part of highest degree of $p$.
Recall that if $\lambda\in\mathrm{E}_{m,n}^*$,
then $c_\lambda$ denotes
the eigenvalue polynomial, as 
in  Definition
\ref{dfcl}, and 
$d_\lambda$ denotes the spherical polynomial, as  in 
Definition
\ref{dfbr}.
We have the following theorem.
\begin{thm}
\label{MAINTHM} For every $\lambda\in\mathrm{E}_{m,n}^*$, we have
 $\oline  c_\lambda=d_{\lambda}$.
\end{thm}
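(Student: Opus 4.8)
The plan is to compare the top-degree parts of the two sides using the structural results already established. Fix $\lambda\in\mathrm{E}_{m,n,d}^*$, so that by Remark \ref{rmkdgclam} we have $\deg(c_\lambda)\le d$ and $d_\lambda\in\sP^d(\g a^*)$ is homogeneous of degree exactly $d$ (this degree statement is built into Definition \ref{dfbr} and uses Proposition \ref{DGBRVVV}). Choose $z_\lambda\in\bfZ(\g g)\cap\bfU^d(\g g)$ with $\check\rho(z_\lambda)=D_\lambda$, which exists precisely because of the \emph{refined} form of Theorem \ref{prpgw} (the order bound is what makes this argument go). Decompose $z_\lambda=z_{\lambda,\g k}+z_{\lambda,\g u^+}+z_{\lambda,\g a}$ as in \eqref{bfUgan}, with all three summands in $\bfU^d(\g g)$. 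By Lemma \ref{cmulZa} the eigenvalue polynomial is $c_\lambda=\sfj^*\circ(\text{identification }\bfU(\g a)\cong\sS(\g a))$ applied to $z_{\lambda,\g a}$, more precisely $c_\lambda(\mu)=\sfh_\mu(z_{\lambda,\g a})$; since $z_{\lambda,\g a}\in\bfU^d(\g a)$, its degree-$d$ part is a well-defined element $\overline{z_{\lambda,\g a}}\in\sS^d(\g a)$, and $\overline{c_\lambda}$ is (up to the $\sfj^*$ identification) the polynomial function on $\g a^*$ determined by $\overline{z_{\lambda,\g a}}$ via $\sfh_{(-)}$. On the other side, Lemma \ref{lemZnZk=0} gives $d_\lambda=(\sfj^*\circ\iota_\g a^*\circ\widetilde\sfh_\beta\circ\widehat\sfs_d)(\check\rho(z_{\lambda,\g a}))$.

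The core of the proof is then to show that these two recipes applied to $z_{\lambda,\g a}$ agree. First I would reduce to the level of symbols: since $z_{\lambda,\g a}\in\bfU(\g a)$ is a polynomial in commuting elements of $\g a$ (the abelian subalgebra is purely even, so there are no sign subtleties), and $\check\rho$ restricted to $\g a$ sends each $h\in\g a$ to a first-order operator $\check\rho(h)$, equation \eqref{whsD1Dk} lets me compute $\widehat\sfs_d(\check\rho(z_{\lambda,\g a}))$ as the degree-$d$ part of the product of the symbols $\widehat\sfs_1(\check\rho(h))$ for $h$ ranging over the monomial factors; terms coming from the lower-order part $z_{\lambda,\g a}-\overline{z_{\lambda,\g a}}$ contribute nothing to the order-$d$ symbol. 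Concretely, for $h=\sum_k a_k E_{k,k}+\sum_l b_l(E_{\overline{2l-1},\overline{2l-1}}+E_{\overline{2l},\overline{2l}})\in\g a$ (a general element, writing out the $\eps_{\overline{2l-1}}=\eps_{\overline{2l}}$ constraint that defines $\g a$), formula \eqref{S1cRho} computes $\widehat\sfs_1(\check\rho(E_{i,i}))$, and applying $\widetilde\sfh_\beta$ picks out, using \eqref{beiej=c}, the diagonal pairing $\beta(\bfe_r,\bfe_r)$ — so that $\widetilde\sfh_\beta(\widehat\sfs_1(\check\rho(h)))$ is an explicit linear combination of the $y_{i,i}$'s times scalars. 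Then applying $\iota_\g a^*$ and using $\iota_\g a(h')=\rho(h')\beta^*$ with $\beta^*$ as in Remark \ref{rmk-v*} identifies $\iota_\g a^*$ of this linear form with the linear functional $h'\mapsto \langle\ \cdot\ ,h'\rangle$ that, after the $\sfj^*$ twist, is exactly evaluation-via-$\sfh$ against $h$ itself. In other words, the composite $\sfj^*\circ\iota_\g a^*\circ\widetilde\sfh_\beta\circ\widehat\sfs_1\circ\check\rho$, restricted to $\g a$, is (a scalar multiple of) the tautological inclusion $\g a\hookrightarrow\sP^1(\g a^*)$ — I would verify this on the basis of $\g a$ dual to $\gamma_1,\dots,\gamma_m,\gamma_{\overline 1},\dots,\gamma_{\overline n}$, which is a short computation with the explicit forms above.

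Granting that, both maps $\g a\to\sP^1(\g a^*)$ agree (up to a common scalar, which I would normalize to $1$ by tracking constants), hence their multiplicative extensions agree on $\sS^d(\g a)$, so $d_\lambda=\sfj^*(\text{polynomial of }\overline{z_{\lambda,\g a}})=\overline{c_\lambda}$. The one gap to fill carefully is the passage from ``$z_{\lambda,\g a}$ has order $\le d$'' to ``its symbol computation only sees $\overline{z_{\lambda,\g a}}$'': I would handle this by noting $\widehat\sfs_d$ annihilates $\sPD^{d-1}(W)$ by \eqref{eq-shmphh}, and $\check\rho$ maps $\bfU^{d-1}(\g a)$ into $\sPD^{d-1}(W)$, so only the top part survives — and symmetrically $\widetilde\sfh_\beta\circ\widehat\sfs_d$ kills the lower terms while $c_\lambda$'s top part sees only $\overline{z_{\lambda,\g a}}$ via $\sfh_\mu$.

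I expect the main obstacle to be the sign and scalar bookkeeping in identifying $\sfj^*\circ\iota_\g a^*\circ\widetilde\sfh_\beta\circ\widehat\sfs_1\circ\check\rho$ on $\g a$ with the identity: one must simultaneously keep straight the supertrace normalization in $\kappa$ and $\sfj$ (which contributes the $\pm 1$ from $\str$ on the odd block), the factor $-\tfrac14$ versus $\tfrac12$ in $\beta^*$ chosen precisely to match the two-to-one identification $\eps_{\overline{2l-1}}=\eps_{\overline{2l}}$ on $\g a$, and the $(-1)^{|r|}$ and $(-1)^{|i|\cdot|j|}$ signs in \eqref{ppooll}/\eqref{S1cRho}. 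The fact that $\beta^*$ in Remark \ref{rmk-v*} was defined with exactly those coefficients is the signal that everything is set up to cancel; the check is routine but must be done on the distinguished basis of $\g a$ rather than in the abstract. Everything else — reducing to symbols, using multiplicativity of $\widehat\sfs$, $\widetilde\sfh_\beta$, $\iota_\g a^*$, $\sfj^*$, and discarding lower-order terms — is formal and follows from the lemmas already proved.
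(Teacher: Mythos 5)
Your proposal is correct and follows essentially the same route as the paper: both arguments reduce, via Lemma \ref{cmulZa} and Lemma \ref{lemZnZk=0}, to checking that $\sfj^*\circ\iota_\g a^*\circ\widetilde\sfh_\beta\circ\widehat\sfs_1\circ\check\rho$ is the tautological map on $\g a$ (the paper performs exactly this check on the basis $h_i$ dual to the $\gamma_i$, finding $-y_{k,k}$ and $-2y_{\oline{2l-1},\oline{2l}}$ pair to the coordinates $a_k$ and $a_{\oline l}$ with overall scalar $1$). The one point worth making explicit is that the degree-$d$ part of $z_{\lambda,\g a}$ is nonzero --- otherwise $\widehat\sfs_d(\check\rho(z_{\lambda,\g a}))=0$ would force $d_\lambda=0$, contradicting Proposition \ref{DGBRVVV} --- which is also what guarantees $\deg(c_\lambda)=d$, so that the degree-$d$ part you compute really is the top homogeneous part $\oline c_\lambda$.
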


\begin{proof}
Let $\gamma_i$ be the basis of $\g a^*$ defined in \eqref{gamkldf}, and let
$\{h_i\,:\,i\in\mathcal{I}_{m,n}\}$ be the dual basis of $\g a$.
Assume that $\lambda\in\mathrm{E}_{m,n,d}^*$, so that 
$V_\lambda\subset \sP^d(W)$. Recall the definition of $z_{\lambda,\g a}$ from \eqref{bfUgan}.
By the PBW Theorem, we can write
\[
z_{\lambda,\g a}=\sum_{k_1,\ldots,k_m,l_{\oline 1},\ldots,l_{\oline n}\geq 0}
 u^{}_{k_1,\ldots,k_m,l_{\oline 1},\ldots,l_{\oline n}}
h_1^{k_1}\cdots h_m^{k_m}
h_{\oline 1}^{l_{\oline 1}}\cdots 
h_{\oline n}^{l_{\oline n}},
\]
where only finitely many of the scalar coefficients  $u^{}_{k_1,\ldots,k_m,l_{\oline 1},\ldots,l_{\oline n}}
$ are nonzero.
Now consider the polynomial $p_\lambda=p_\lambda(t_1,\ldots,t_{m+n})$ in $m+n$ variables 
$t_1,\ldots,t_{m+n}$, defined by
\[
p_\lambda(t_1,\ldots,t_{m+n}):=\sum_{k_1,\ldots,k_m,l_{\oline 1},\ldots,l_{\oline n}\geq 0}
 u^{}_{k_1,\ldots,k_m,l_{\oline 1},\ldots,l_{\oline n}}
t_1^{k_1}\cdots t_m^{k_m}
t_{m+ 1}^{l_{\oline 1}}\cdots 
t_{m+ n}^{l_{\oline n}},
\]
so that
$
z_{\lambda,\g a}=p_\lambda\left(h_1^{},\ldots,h_m^{},h_{\oline 1},\ldots,h_{\oline n}\right)
$.
We first note that $\deg(p_\lambda)=d$.
Indeed since $z_{\lambda,\g a}\in\bfU^d(\g g)$,  it follows that $\deg(p_\lambda)\leq d$. If
$\deg(p_\lambda)<d$, then $\mathrm{ord}(\check\rho(z_{\lambda,\g a}))<d$, and  Lemma 
\ref{lemZnZk=0} implies that $d_\lambda=0$, which contradicts Proposition \ref{DGBRVVV}.

Fix $\xi:=\sum_{i\in\mathcal I_{m,n}}a_i\gamma_i\in\g a^*$.
Lemma \ref{cmulZa} implies that
\begin{align}
\label{cl=pol}
c_\lambda(\xi)
&=p_\lambda(\xi(h_1^{}),\ldots,\xi(h_m^{}),
\xi(h_{\oline 1}),\ldots,\xi(h_{\oline n}))
=p_\lambda(a_1^{},\ldots,a_m^{},a_{\oline 1},\dots,a_{\oline n}).
\end{align}
To complete the proof, it suffices to show that for all $\xi\in\g a^*$ we have
\begin{align}
\label{dlxxi}
d_\lambda(\xi)
=
\oline p_\lambda
\big(a_1^{},\ldots,a_m^{},a_{\oline 1},\ldots,a_{\oline n}\big),
\end{align}
where $\oline p_\lambda$ denotes the homogeneous part of
highest degree of $p_\lambda$.
Set $\mathbf D_i:=\check\rho(h_i)\in\sPD(W)$ for every
$i\in\mathcal I_{m,n}$. 
Since $\g a$ is commutative, we have $\mathbf D_i\mathbf D_j=\mathbf D_j\mathbf D_i$ for every $i,j\in\mathcal I_{m,n}$, and thus
\[
\check\rho(z_{\lambda,\g a})
=
p_\lambda(\mathbf D_1,\ldots,\mathbf D_m,\mathbf D_{\oline 1},\ldots,\mathbf D_{\oline n}).
\]
The last relation together with the fact that
$\deg(p_\lambda)=d$ imply that
\begin{align*}
\widehat\sfs_d
(\check\rho(z_{\lambda,\g a}))&=
{\oline p}_\lambda
\big(\widehat\sfs_1(\mathbf D_1),
\ldots,
\widehat\sfs_m(\mathbf D_m),
\widehat\sfs_1(\mathbf D_{\oline{1}}),
\ldots,
\widehat\sfs_1(\mathbf D_{\oline{n}})
\big),
\end{align*}
Using \eqref{S1cRho} we obtain
\begin{equation}
\label{bigyehhhh}
\widetilde\sfh_\beta(\widehat\sfs_d
(\check\rho(z_{\lambda,\g a})))
=
\oline p_\lambda\big(-y_{1,1},\ldots,-y_{m,m},
-2y_{\oline 1,\oline 2},\ldots,-2y_{\oline{2n-1},\oline {2n}}\big).
\end{equation}
It is straightforward to verify that 
\[
\lag \iota_\g a^*(y_{k,k}),\sfj(\xi)\rag =-a_k\,\text{ for }
1\leq k\leq m
\ \text{ and }\ 
\lag \iota_\g a^*(y_{\oline{2l-1},\oline{2l}}),\sfj(\xi)\rag=-\frac12 a_{\oline l}
\,\text{ for }
1\leq l\leq n.
\]
Thus, by composing both sides of 
\eqref{bigyehhhh} with 
$\sfj^*\circ \iota_\g a^*$
and then evaluating both sides at $\xi$, we obtain
\begin{align*}
d_\lambda(\xi)
=
\big(\sfj^*\circ \iota_\g a^*\circ \widetilde\sfh_\beta\circ\widehat\sfs_d\big)
(\check\rho(z_{\lambda,\g a}))(\xi)
=
\oline p_\lambda
\big(a_1^{},\ldots,a_m^{},a_{\oline 1},\ldots,a_{\oline n}\big).
\end{align*}
This establishes \eqref{dlxxi} and completes the proof.
\end{proof}

Our final result in this section is a characterization of the eigenvalue polynomial $c_\lambda$ by its symmetry and vanishing properties. 
Let 
\begin{equation}
\label{HC+dfneq}
\HC^+:\bfU(\g g)\to \bfU(\g a)
\end{equation}
denote the Harish--Chandra projection
defined by the composition 
\begin{equation*}
\bfU(\g g) \xrightarrow{\ \psf^+\ }\bfU(\g h) \xrightarrow{\ \sfq\ }\bfU(\g %
a) 
\end{equation*}
where $\psf^+:\bfU(\g g)\to\bfU(\g h)$ is the projection according to the
decomposition 
\begin{equation*}
\bfU(\g g)=\left(\bfU(\g g)\g n^+ +\g n^-\bfU(\g g)\right)\oplus\bfU(\g h) 
\end{equation*}
and $\sfq:\bfU(\g h)\to\bfU(\g a)$ is the projection corresponding to the
decomposition $\g h=\g a\oplus\g t$.
For every $\lambda\in\mathrm{E}_{m,n}^*$, let
$z_\lambda\in\bfZ(\g g)$ be defined as in 
\eqref{choicezl}. A precise description of the algebra $\psf^+(\bfZ(\g g))\subseteq \bfU(\g h)$ is 
 known
(see for example \cite{sergeev82}, \cite{sergeev3}, \cite{KacZ}, \cite{Gorelik}, or \cite[Sec. 2.2.3]{ChWabook}), 
and implies  that  
$\psf^+(\bfZ(\g g))$ is equal to the subalgebra of 
$\bfU(\g h)$ generated by the elements
\begin{equation}
\label{dfnGd}
G_d:=\sum_{k=1}^m\left(
E_{k,k}+\frac{m+1}{2}-n-k
\right)^d
+(-1)^{d-1}
\sum_{l=1}^{2n}
\left(
E_{\oline{l},\oline{l}}+\frac{m+1}{2}+n-l
\right)^d
\end{equation}
for every $d\geq 1$.
Let $\mathbf I(\g a^*)\subseteq\sP(\g a^*)$ be the subalgebra that corresponds to $\sfq(\psf^+(\bfZ(\g g))$ via the canonical isomorphism $\bfU(\g a)\cong\sS(\g a)\cong \sP(\g a^*)$.

\begin{thm}
\label{thm-unqclam}
Let $\lambda\in\mathrm{E}_{m,n,d}^*$. 
Then ${c}_\lambda$ is 
the unique element of 
$
\mathbf I(\g a^*)
$
that satisfies
$\deg(c_\lambda)\leq d$, $c_\lambda(\lambda)=d!$,
and
 $c_\lambda(\mu)=0$ for all other
$\mu$ in $\bigcup_{d'=0}^d \mathrm{E}_{m,n,d'}^*
$.
\end{thm}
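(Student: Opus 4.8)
The statement has two halves, and the plan is to reduce both to facts already in hand plus one structural input on the algebra $\mathbf I(\g a^*)$.

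\emph{Existence.} The inequality $\deg(c_\lambda)\le d$ is Remark \ref{rmkdgclam}, and $c_\lambda(\lambda)=d!$ together with $c_\lambda(\mu)=0$ for the remaining $\mu\in\bigcup_{d'=0}^d\mathrm E^*_{m,n,d'}$ is Lemma \ref{lem-clmuvanish}; so the only point to check is that $c_\lambda$ belongs to $\mathbf I(\g a^*)$. By Lemma \ref{cmulZa}, $c_\lambda$ is the image of the component $z_{\lambda,\g a}$ of \eqref{bfUgan} under $\bfU(\g a)\cong\sP(\g a^*)$, while $\mathbf I(\g a^*)$ is the image of $\sfq(\psf^+(\bfZ(\g g)))$ under the same isomorphism. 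Hence it suffices to prove the identity $z_{\g a}=\sfq(\psf^+(z))=\HC^+(z)$ in $\bfU(\g a)$ for every $z\in\bfZ(\g g)$, where $z_{\g a}$ denotes the $\bfU(\g a)$-component relative to $\bfU(\g g)=\big(\g k\bfU(\g g)+\bfU(\g g)\g u^+\big)\oplus\bfU(\g a)$. This amounts to $z-\HC^+(z)\in\g k\bfU(\g g)+\bfU(\g g)\g u^+$, and I would prove it in two steps. First, writing $\bfU(\g h)\cong\bfU(\g a)\otimes\bfU(\g t)$, one has $\psf^+(z)-\HC^+(z)\in\bfU(\g a)\,\g t\,\bfU(\g t)=\g t\,\bfU(\g h)\subseteq\g k\bfU(\g g)$, since $\g t\subseteq\g k$ commutes with $\g a$. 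Second, because $z$ is central it annihilates the highest weight vector of every Verma module (as $\chi_\mu(z)=\sfh_\mu(\psf^+(z))$), which by the PBW decomposition $\bfU(\g g)=\bfU(\g n^-)\bfU(\g h)\oplus\bfU(\g g)\g n^+$ forces $z-\psf^+(z)\in\bfU(\g g)\g n^+$; and $\bfU(\g g)\g n^+=\bfU(\g g)\g u^++\bfU(\g g)\g m^+$ with $\g m^+:=\g n^+\cap\g k$ spanned by the root vectors $E_{\oline{2l-1},\oline{2l}}$. As these lie in $\g k$ and commute with $\g a$, and $[\g u^+,\g m^+]\subseteq\g u^+$, an induction on PBW degree in the decomposition $\g g=\g k\oplus\g a\oplus\g u^+$ of \eqref{iwasawa} gives $\bfU(\g g)\g m^+\subseteq\g k\bfU(\g g)+\bfU(\g g)\g u^+$. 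Combining the two steps yields $z-\HC^+(z)\in\g k\bfU(\g g)+\bfU(\g g)\g u^+$, hence $z_{\g a}=\HC^+(z)$ and in particular $c_\lambda\in\mathbf I(\g a^*)$.

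\emph{Uniqueness.} Put $\mathbf I_{\le d}:=\{p\in\mathbf I(\g a^*):\deg p\le d\}$ and $N_d:=\#\bigcup_{d'=0}^d\mathrm E^*_{m,n,d'}$; by \eqref{eqhwofVl} the map $\flat\mapsto\mu$ is a bijection $\mathrm H_{m,n,d'}\xrightarrow{\sim}\mathrm E^*_{m,n,d'}$, so $N_d=\sum_{d'=0}^d\#\mathrm H_{m,n,d'}$. Order the points of $\bigcup_{d'\le d}\mathrm E^*_{m,n,d'}$ so that the sizes of the associated hook partitions are nondecreasing. Lemma \ref{lem-clmuvanish} then says that the matrix $\big(c_\mu(\nu)\big)$ is triangular with nonzero diagonal, so the $c_\mu$ — all of which lie in $\mathbf I_{\le d}$ by the Existence part — are linearly independent; thus $\dim\mathbf I_{\le d}\ge N_d$. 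Granting the reverse inequality $\dim\mathbf I_{\le d}\le N_d$, equality holds, the $c_\mu$ form a basis of $\mathbf I_{\le d}$, and the evaluation map $\mathbf I_{\le d}\to\C^{\,N_d}$ at these points is represented in that basis by the invertible matrix $\big(c_\mu(\nu)\big)$. Hence a $p\in\mathbf I_{\le d}$ with the prescribed values $p(\lambda)=d!$ and $p(\mu)=0$ otherwise is unique, and since $c_\lambda$ realizes exactly those values, $p=c_\lambda$.

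\emph{The dimension bound, and the main obstacle.} It remains to prove $\dim\mathbf I_{\le d}\le N_d$. I would pass to the associated graded $\mathrm{gr}\,\mathbf I(\g a^*)$, identified inside $\sP(\g a^*)$ with the span of top-degree parts of elements of $\mathbf I(\g a^*)$, so that $\dim\mathbf I_{\le d}=\dim(\mathrm{gr}\,\mathbf I)_{\le d}$. Since $\mathbf I(\g a^*)$ is generated by the images of the $G_k$ of \eqref{dfnGd}, the algebra $\mathrm{gr}\,\mathbf I(\g a^*)$ is generated by their top-degree parts; a direct computation of the $\rho$-shifts in \eqref{dfnGd} — using that $\sfq(E_{i,i})$ corresponds to the coordinate $a_i$ on $\g a^*$ for $1\le i\le m$, while the $\g a$-part of $E_{\oline{2l-1},\oline{2l-1}}$ (equivalently of $E_{\oline{2l},\oline{2l}}$) corresponds to $\tfrac12 a_{\oline l}$ — shows that these top-degree parts are the deformed power sums $\sum_{i=1}^m a_i^k+(-1)^{k-1}2^{1-k}\sum_{l=1}^n a_{\oline l}^k$, i.e., after rescaling the $a_{\oline l}$, the power sums $p_k^{(\theta)}$ with $\theta=-\tfrac12$. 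Consequently $\mathrm{gr}\,\mathbf I(\g a^*)$ is the algebra of deformed ($\theta=-\tfrac12$) supersymmetric functions in $m+n$ variables, whose degree-$d$ component is known to have dimension $\#\mathrm H_{m,n,d}$, the super Jack polynomials $SP_\flat$ with $\flat\in\mathrm H_{m,n,d}$ forming a basis (see \cite{SerVes}); hence $\dim(\mathrm{gr}\,\mathbf I)_{\le d}=\sum_{d'\le d}\#\mathrm H_{m,n,d'}=N_d$, as required. The main obstacle is exactly this last paragraph: performing the leading-term computation from \eqref{dfnGd} and, more substantially, pinning down the Hilbert series of the deformed supersymmetric function algebra at the special value $\theta=-\tfrac12$ — equivalently, that the deformed power sums still generate there an algebra with the hook-partition dimension. (Alternatively, the lower bound $\dim(\mathrm{gr}\,\mathbf I)_d\ge\#\mathrm H_{m,n,d}$ can be produced internally by an induction on $d$: Lemma \ref{lem-clmuvanish} together with the inductive form of uniqueness forces $\deg c_\lambda=d$ and the independence of the $\oline{c_\lambda}$ for $\lambda\in\mathrm E^*_{m,n,d}$, so that the only genuinely external input becomes the matching upper bound.)
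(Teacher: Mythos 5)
Your uniqueness argument has a genuine gap at exactly the point you flag as ``the main obstacle'': the upper bound $\dim \mathbf I_{\le d}\le N_d$ is never actually established, and the route you sketch for it does not work as stated. The claim that $\mathrm{gr}\,\mathbf I(\g a^*)$ is generated by the top-degree parts of the generators of $\mathbf I(\g a^*)$ is false for filtered algebras in general (e.g.\ $\C[x^2,\,x^2+x]=\C[x]$ has associated graded $\C[x]$, strictly larger than the algebra generated by the leading terms $x^2$); one only gets the inclusion $\mathrm{gr}\,\mathbf I\supseteq\langle\text{leading symbols}\rangle$, which is the wrong direction for an upper bound. Even granting that identification, you would still need the Hilbert-series statement for the deformed power-sum algebra at the special parameter value, which you defer to external references. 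The paper avoids all of this: it shows directly that
$\mathbf I(\g a^*)\subseteq\mathrm{Span}_\C\{c_\lambda\,:\,\lambda\in\mathrm{E}_{m,n}^*\}$,
because any $z\in\bfZ(\g g)$ has $\check\rho(z)\in\sPD(W)^{\g g}=\mathrm{Span}_\C\{D_\lambda\}$, and pairing with the spherical vector of Remark \ref{rmk-v*} gives $\lag v_\circ^*,\check\rho(z)v_\mu\rag=\mu(\HC^+(z))\lag v_\circ^*,v_\mu\rag$ with $\lag v_\circ^*,v_\mu\rag\neq 0$; hence the polynomial $\mu\mapsto\mu(\HC^+(z))$ agrees with a finite linear combination $\sum_k a_k c_{\lambda_k}$ on the Zariski-dense set $\mathrm{E}_{m,n}^*$ and therefore everywhere. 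Intersecting with $\bigoplus_{d'\le d}\sP^{d'}(\g a^*)$, and using Theorem \ref{MAINTHM} together with Proposition \ref{DGBRVVV} to see that any nonzero combination involving some $c_\lambda$ with $|\lambda|>d$ has degree $>d$, yields $\mathbf I_{\le d}=\mathrm{Span}\{c_\lambda:\lambda\in\bigcup_{d'\le d}\mathrm{E}^*_{m,n,d'}\}$, whence $\dim\mathbf I_{\le d}\le N_d$ for free. Your triangularity argument then closes the proof exactly as in the paper. So the missing ingredient is not a Hilbert-series computation but the observation that the abstract Capelli theorem (more precisely, the fact that $\{D_\lambda\}$ is a basis of $\sPD(W)^{\g g}$) already pins down $\mathbf I(\g a^*)$ as the span of the $c_\lambda$.

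Your existence argument is a legitimate alternative to the paper's. The paper deduces $c_\lambda\in\mathbf I(\g a^*)$ from the same representation-theoretic identity $c_\lambda(\mu)=\mu(\HC^+(z_\lambda))$ plus Zariski density, whereas you prove the purely algebraic identity $z_{\g a}=\HC^+(z)$ for $z\in\bfZ(\g g)$. That identity is correct, and your three reductions are sound, but the final step needs more care than ``induction on PBW degree'': to see that $\bfU(\g g)\g m^+\subseteq\g k\bfU(\g g)+\bfU(\g g)\g u^+$ with $\g m^+=\bigoplus_l\C E_{\oline{2l-1},\oline{2l}}$, one should work with PBW monomials ordered according to $\g g=\g k\oplus\g a\oplus\g u^+$ and use that $[\g u^+,\g m^+]\subseteq\g u^+$, $[\g a,\g m^+]=0$, $\g m^+\subseteq\g k$, and $\bfU(\g k)\g k=\g k\bfU(\g k)$; a naive commutator induction produces correction terms $[u,x]$ that are general elements of $\bfU(\g g)$ and do not obviously close the induction.
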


\begin{proof}
By 
Remark \ref{rmkdgclam} and
Lemma \ref{lem-clmuvanish}, only the uniqueness statement requires proof.\\

\noindent\textbf{Step 1.}
We prove that \begin{equation}
\label{eqIa*=}
\mathbf I(\g a^*)\cap
\bigoplus_{d'=0}^d\sP^{d'}(\g a^*)=\mathrm{Span}_\C
\left\{ c_\lambda\,:\,
\lambda\in\bigcup_{d'=0}^d\mathrm{E}_{m,n,d'}^*
\right\}
.
\end{equation}
Fix $z\in\bfZ(\g g)$ and let $\mu\in\mathrm{E}_{m,n}^*$. Let $v_\mu\in V_\mu$ be a 
highest weight vector and let $v_\circ^*\in V_\mu^*$ be a nonzero $\g k$-invariant vector. As shown in the proof of Lemma \ref{cmulZa}, we have $\lag v_\circ^*,v_\mu\rag\neq 0$. Since
$z\in\bfZ(\g g)$, we have
 $z-\psf^+(z)\in\bfU(\g g)\g n^+\cap\g n^-\bfU(\g g)$, and therefore
\begin{align*}
\lag v_\circ^*,\check\rho(z)v_\mu\rag
&=
\lag v_\circ^*,\check\rho(\psf^+(z))v_\mu\rag\\
&=
\lag v_\circ^*,\check\rho(\sfq(\psf^+(z)))v_\mu\rag=
\mu(\sfq(\psf^+(z)))\lag v_\circ^*,v_\mu\rag
=
\mu(\HC^+(z))\lag v_\circ^*,v_\mu\rag.
\end{align*}
Since $\check\rho(z)\in\sPD(W)^\g g$, we can write $\check\rho(z)$ as a linear combination of Capelli operators (see Definition \ref{DefDla}), say
$\check\rho(z)=\sum_k{a_k}D_{\lambda_k}$.
It follows that the map $\mu\mapsto\mu(\HC^+(z))$ agrees with  $\sum_k a_k c_{\lambda_k}(\mu)$ for $\mu\in\mathrm{E}_{m,n}^*$. Since $\mathrm{E}_{m,n}^*$ is Zariski dense in $\g a^*$,  the latter two maps should agree for all  $\mu\in\g a^*$ as well. This implies 
that
\[
\mathbf I(\g a^*)\subseteq\mathrm{Span}_\C\{c_\lambda\,:\,\lambda\in\mathrm{E}_{m,n}^*\}.
\] 
Furthermore, 
from Theorem \ref{MAINTHM} and Proposition \ref{DGBRVVV} it follows that the homogeneous part of highest degree of every nonzero element of  \[
\mathrm{Span}_\C\left\{c_\lambda\,:\,\lambda\in\bigcup_{d'>d}\mathrm{E}_{m,n,d'}^*\right\}
\]
has degree strictly bigger than $d$. 
Consequently, the left hand side of \eqref{eqIa*=} is a subset of its right hand side. The reverse inclusion follows from the above argument by choosing $z:=z_\lambda$ where $z_\lambda$ is defined in 
\eqref{choicezl}.\\

\noindent\textbf{Step 2.} 
Set $N_{m,n,d}:=\sum_{d'=0}^d\big|\mathrm{E}_{m,n,d'}^*\big|$.
From Step 1 it follows that
\[
\dim\left(
\mathbf I(\g a^*)\cap
\bigoplus_{d'=0}^d\sP^{d'}(\g a^*)
\right)
\leq N_{m,n,d}.
\] Now consider the linear map
\[
L_{m,n,d}:\mathbf I(\g a^*)\cap
\bigoplus_{d'=0}^d\sP^{d'}(\g a^*)\to\C^{N_{m,n,d}}
\ ,\
p\mapsto \big(p(\mu)\big)_{\mu\in\bigcup_{d'=0}^d
\mathrm{E}_{m,n,d'}^*}.
\]
From Lemma \ref{lem-clmuvanish} it follows that the vectors $L_{m,n,d}(c_\lambda)$, for $\lambda\in\bigcup_{d'=0}^d\mathrm{E}_{m,n,d'}^*$,  form a 
nonsingular triangular matrix, so that they form a basis of $\C^{N_{m,n,d}}$. Consequently,  $L_{m,n,d}$ is an invertible linear transformation. In particular, 
for every $\lambda\in\mathrm{E}_{m,n,d}^*$, the 
polynomial $c_\lambda$
is the unique element of $\mathbf I(\g a^*)\cap
\bigoplus_{d'=0}^d\sP^{d'}(\g a^*)$  that satisfies
$c_\lambda(\lambda)=d!$
and
 $c_\lambda(\mu)=0$ for all other
$\mu\in\bigcup_{d'=0}^d \mathrm{E}_{m,n,d'}^*$.
\end{proof}

\section{Relation with Sergeev--Veselov polynomials for $\theta=\frac{1}{2}$}

\label{SecRelSer}

The main result of this section is 
Theorem \ref{thmconnSV},
which describes the precise relation between the eigenvalue polynomials $
c_\lambda$ of Definition \ref{dfcl}
and the shifted super Jack polynomials of Sergeev--Veselov \cite%
{SerVes}. 
As in the previous section, we set $V:=\C^{m|2n}$, so that 
$\g g:=\gl(V)=\gl(m|2n)$. The map $\omega:\g g\to\g g$ given by $%
\omega(x)=-x$ is an anti-automorphism of $\g g$, that is, 
\[
\omega([x,y])=(-1)^{|x|\cdot|y|}[\omega(y),\omega(x)]
\] for $x,y\in\g g$.
Therefore $\omega$ extends canonically to an anti-automorphism $\omega:\bfU(%
\g g)\to\bfU(\g g)$, that is, 
\begin{equation*}
\omega(xy)=(-1)^{|x|\cdot|y|}\omega(y)\omega(x)  
\ \text{ for }x,y\in\bfU(\g g).
\end{equation*}

Recall that $\HC^+:
\bfU(\g g)\to \bfU
(\g a)$
is the Harish--Chandra
projection as in \eqref{HC+dfneq}.
Let \[
\HC^-:\bfU(\g g)\to \bfU%
(\g a)
\] be the opposite Harish--Chandra projection defined by the
composition 
\begin{equation*}
\bfU(\g g) \xrightarrow{\ \psf^-\ }\bfU(\g h) \xrightarrow{\ \sfq\ }\bfU(\g %
a) 
\end{equation*}
where where $\psf^-:\bfU(\g g)\to\bfU(\g h)$ is the projection according to
the decomposition 
\begin{equation*}
\bfU(\g g)=\left(\bfU(\g g)\g n^- +\g n^+\bfU(\g g)\right)\oplus\bfU(\g h). 
\end{equation*}
It is straightforward to verify that 
\begin{equation}  \label{HC+HC-p}
\HC^-(\omega(z))=\omega(\HC^+(z))\text{ for }z\in\bfU(\g g).
\end{equation}
 

\begin{dfn}
\label{Dfndualmu}
For every $\mu\in\mathrm{E}_{m,n}^*$, we use $\mu^*$ to denote the unique element 
of $\mathrm{E}_{m,n}$
that satisfies $V_\mu^*\cong V_{\mu^*}^{}$. 
\end{dfn}
Using formulas \eqref{frmLamBk} and \eqref{prpSWW*}, one can see that the map $\mu\mapsto \mu^*$ is not linear. Nevertheless, the following proposition still holds.
\begin{prp}
\label{prpQlam}
Let $\lambda\in\mathrm{E}_{m,n,d}^*$. Then there exists a unique polynomial $c_\lambda^*\in\sP(\g a^*)$ such that  
$\deg(c_\lambda^*)\leq d$ and $
c_\lambda(\mu)=c_\lambda^*(\mu^*)
$ for every 
$\mu\in\mathrm{E}_{m,n}^*$.
\end{prp}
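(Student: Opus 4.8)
The plan is to compute $c_\lambda(\mu)$ explicitly in terms of the highest weight data of $V_\mu^*$ rather than of $V_\mu$, using the anti-automorphism $\omega$ and the opposite Harish--Chandra projection $\HC^-$. Recall from Lemma \ref{cmulZa} and (the proof of) Theorem \ref{thm-unqclam} that $c_\lambda(\mu)=\mu(\HC^+(z_\lambda))$, where $z_\lambda\in\bfZ(\g g)\cap\bfU^d(\g g)$ is the central element from \eqref{choicezl}. Since $\omega$ acts as $-1$ on $\g g$ and hence preserves $\bfZ(\g g)$, I would first observe that $\omega(z_\lambda)\in\bfZ(\g g)\cap\bfU^d(\g g)$ as well, and then apply \eqref{HC+HC-p}: $\HC^-(\omega(z_\lambda))=\omega(\HC^+(z_\lambda))$. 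The key point is that $\mu(\HC^+(z_\lambda))$ can be rewritten, via the lowest-weight vector of $V_\mu$, as a value of $\HC^-$ evaluated at the \emph{lowest} weight of $V_\mu$; and the lowest weight of $V_\mu$ (with respect to $\g b$) is, up to the rho-shift bookkeeping in \eqref{dfnGd}, governed by the highest weight $\mu^*$ of $V_\mu^*$.

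Concretely, I would proceed as in the introduction's sketch: consider the action of $z_\lambda$ on a \emph{lowest} weight vector $w_\mu$ of $V_\mu\subseteq\sP(W)$, pairing against a nonzero $\g k$-fixed vector in $V_\mu^*$ (which exists and is unique up to scalar by Remark \ref{rmk-v*}, and pairs nontrivially with $w_\mu$ since $V_\mu=\bfU(\g k)w_\mu$ by the PBW theorem and the Iwasawa decomposition \eqref{iwasawa}, exactly as in the proof of Lemma \ref{cmulZa} but with $\g u^-$ in place of $\g u^+$). This identifies $c_\lambda(\mu)$ with $\mu^-(\HC^-(z_\lambda))$, where $\mu^-$ is the $\g b^-$-highest (i.e. $\g b$-lowest) weight of $V_\mu$, restricted to $\g a$. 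Since by Remark \ref{prpChWaSe} the contragredient $V_\mu^*$ has highest weight $-\mu^-$ (extended trivially on $\g t$), and since $\mu^*$ is by Definition \ref{Dfndualmu} precisely this highest weight of $V_{\mu^*}\cong V_\mu^*$, we get $\mu^-|_{\g a}=-\mu^*$. Therefore $c_\lambda(\mu)=(-\mu^*)(\HC^-(z_\lambda))=(-\mu^*)\big(\omega(\HC^+(z_\lambda))\big)$. Finally, one uses that $\omega$ on $\bfU(\g a)\cong\sS(\g a)$ is the algebra automorphism sending $h\mapsto -h$, so evaluating $\omega(p)$ at $-\mu^*$ equals evaluating $p$ at $\mu^*$ up to the trivial sign on each monomial — more precisely, if $p\in\sS(\g a)\cong\sP(\g a^*)$ then $(\,\omega(p)\,)(\xi)=p(-\xi)$ for $\xi\in\g a^*$. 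Hence
\[
c_\lambda(\mu)=p(\mu^*)\quad\text{where}\quad p(\xi):=\big(\HC^+(z_\lambda)\big)(-\xi),
\]
and we set $c_\lambda^*:=p$. Since $z_\lambda\in\bfU^d(\g g)$, we have $\HC^+(z_\lambda)\in\bfU^d(\g a)\cong\bigoplus_{d'\le d}\sP^{d'}(\g a^*)$, so $\deg(c_\lambda^*)\le d$, as required. Uniqueness is immediate: $\mathrm{E}_{m,n}$ is Zariski dense in $\g a^*$ (it contains $\boldsymbol\Gamma(\mathrm H_{m,n})$, whose image spans, and in fact the analogous density was already used for $\mathrm{E}_{m,n}^*$ in Step 1 of the proof of Theorem \ref{thm-unqclam}), so two polynomials agreeing on all $\mu^*$, $\mu\in\mathrm{E}_{m,n}^*$, coincide.

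The main obstacle I anticipate is \emph{not} the formal manipulation with $\omega$ and $\HC^\pm$, but verifying cleanly that pairing $z_\lambda\cdot w_\mu$ against the $\g k$-fixed vector in $V_\mu^*$ really produces $\mu^-(\HC^-(z_\lambda))$ — i.e. that the ``wrong-side'' terms $\g n^+\bfU(\g g)$ and $\bfU(\g g)\g n^-$ in the $\psf^-$-decomposition genuinely annihilate the relevant vectors after pairing. This is the exact mirror of the computation in Lemma \ref{cmulZa} and in Step 1 of Theorem \ref{thm-unqclam}, so it should go through verbatim with $\g u^+\leftrightarrow\g u^-$ and $\g n^+\leftrightarrow\g n^-$ swapped; the only thing to be careful about is the rho-shift, but since we are only asserting that $c_\lambda^*$ is \emph{some} polynomial of degree $\le d$ (not computing it), any affine change of variables coming from the shift is harmless and can be absorbed into $p$. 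A secondary point worth stating carefully is the sign/parity bookkeeping in the claim $(\omega(p))(\xi)=p(-\xi)$ for $p\in\sS(\g a)$, which follows because $\omega$ fixes no generators but negates each, and $\g a$ is purely even so there are no extra Koszul signs.
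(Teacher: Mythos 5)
Your argument is essentially the paper's own proof: evaluate $z_\lambda$ on the lowest weight vector of $V_\mu$, whose weight restricted to $\g a$ is $-\mu^*$, obtain $c_\lambda(\mu)=\sfh_{-\mu^*}(\HC^-(z_\lambda))$, deduce the degree bound from \eqref{HC+HC-p} and Theorem \ref{prpgw}, and get uniqueness from Zariski density of $\mathrm{E}_{m,n}$ in $\g a^*$. The only blemishes are cosmetic: the pairing against the $\g k$-fixed vector is unnecessary (since $D_\lambda$ is already scalar on $V_\mu$, evaluating on $v_\mu^-$ suffices), and the identity $\HC^-(z_\lambda)=\omega(\HC^+(z_\lambda))$ should read $\HC^-(z_\lambda)=\omega\bigl(\HC^+(\omega(z_\lambda))\bigr)$, which does not affect the degree bound.
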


\begin{proof}
First we prove the existence of $c^*_\lambda$.
Recall that the action of $D_\lambda:V_\mu\to V_\mu$ is by the scalar $c_\lambda(\mu)$. 
Let $v_{\mu^*}$ denote the highest weight of $V_{\mu^*}$, and 
define
$v_\mu^-\in (V_{\mu^*})^*\cong V_\mu$ by
 $\lag v_\mu^-,v_\mu^{*}\rag =1$ and $\lag v_\mu^-,\bfU(\g n^-)v_{\mu^*}\rag=0$. It is straightforward to verify that $v_\mu^-\in (V_\mu)^{\g n^-}$, and thus
 $v_{\mu}^-$ is the lowest weight vector of $V_\mu^{}$. It follows immediately that the lowest weight of $V_\mu^{}$ is $-\mu^*$. 
 Choose $z_\lambda\in\bfZ(\g g)$ as in \eqref{choicezl}. 
By considering the $\g h$-action on $\bfU(\g g)$ we obtain $
z_\lambda-\psf^-(z_\lambda)\in\bfU(\g g)\g n^-
$, and therefore
\[
c_\lambda(\mu)v_\mu^-=D_\lambda v_\mu^-=\check\rho(z_\lambda)v_\mu^-
=\check\rho(\psf^-(z_\lambda))v_\mu^-=
\sfh_{-\mu^*}(\HC^-(z_\lambda))v_\mu^-,
\]
where $\sfh_{-\mu^*}:\bfU(\g a)\cong\sS(\g a)\to\C$ is defined as in \eqref{dfheta}.
It is straightforward to check that 
the map 
\begin{equation}
\label{eq-dfQlamb}
c_\lambda^*(\nu):=
\sfh_{-\nu}(\HC^-(z_\lambda))
\end{equation} is a polynomial in $\nu\in\g a^*$. 
From \eqref{HC+HC-p} and Theorem \ref{prpgw} it follows that
$\deg(c^*_\lambda)\leq d$.
Finally, uniqueness of $c^*_\lambda$ follows from the fact that 
$\mathrm{E}_{m,n}$ is Zariski dense in $\g a^*$. 
 \end{proof}
 
We now recall 
the definition of
the algebra of \emph{shifted
supersymmetric polynomials} 
\[
\Lambda_{m,n,\frac12}^\natural\subseteq\sP(\C^{m+n}),
\]
introduced in \cite[Sec. 6]{SerVes}. 
For $1\leq k\leq r$, let $\mathsf{e}_{k,r}$ be 
the $k$-th unit vector in $\C^r$. 
Then the algebra 
$\Lambda_{m,n,\frac12}^\natural$
consists of polynomials $f(\sfx_1,\ldots,\sfx_m,\sfy_1,\ldots,\sfy_n)$, 
which are 
separately 
symmetric in $\sfx:=(\sfx_1,\ldots,\sfx_m)$ and
in $\sfy:=(\sfy_1,\ldots,\sfy_n)$, and which satisfy
the relation 
\begin{equation*}
\textstyle f(\sfx+\frac{1}{2}\mathsf{e}_{k,m},\sfy-\frac{1}{2}\mathsf{e}_{l,n})=f(\sfx-\frac{1}{2}\mathsf{e}_{k,m},\sfy+\frac{1}{2}\mathsf{e}_{l,n}) 
\end{equation*}
on every hyperplane $\sfx_k+\frac12 \sfy_l=0$, where $1\leq k\leq m$ and $1\leq
l\leq n$.

In \cite[Sec. 6]{SerVes},  Sergeev and Veselov introduce a  basis 
 of 
$\Lambda_{m,n,\frac12}^\natural$,
\begin{equation}
\label{SPbinLL}
\left\{
SP_\flat^*\in\Lambda_{m,n,\frac12}^\natural\ :\
\flat\in\mathrm{H}_{m,n}\right\},
\end{equation}
indexed by the set $\mathrm{H}_{m,n}$  of $(m|n)$-hook partitions (see
Definition \ref{dfnhookprn}).
The polynomials $SP_\flat^*$ are called \emph{shifted super Jack polynomials} and they satisfy certain vanishing conditions, given in \cite[Eq. (31)]{SerVes}.

Recall the map 
$\boldsymbol\Gamma:\mathrm{H}_{m,n}\to \g a^*$ 
from \eqref{bfGam}.
Given $\flat=(\flat_1,\flat_2,\flat_3,\ldots)\in\mathrm{H}_{m,n}$, we set $%
\flat_k^*:=\max\{\flat_k^{\prime }-m,0\}$
for every $k\geq 1$.
 Fix $\mu\in\mathrm{E}_{m,n}^*$,
and let $\mu^*\in \mathrm{E}_{m,n}$ be  as in Definition
\ref{Dfndualmu}. From the
definition of the map $\boldsymbol{\Gamma}$ it follows that there exists a $\flat\in\mathrm{H}_{m,n}$ such that
\begin{equation}
\label{fordfGAM}
\mu^*=\boldsymbol{\Gamma}(\flat)=
\sum_{i=1}^m2\flat_i\gamma_i+\sum_{j=1}^n2\flat_j^* \gamma_{\oline j}. 
\end{equation}
Now let $\mathsf{F}:\mathrm{H}_{m,n}\to \C^{m+n}$ be the \emph{Frobenius map} of \cite%
[Sec. 6]{SerVes}, defined as follows.
For every $\flat\in\mathrm{H}_{m,n}$, 
we have
$\mathsf{F}(\flat)=\left(\sfx_1(\flat),\ldots,\sfx_m(\flat),\sfy_1(\flat),\ldots,\sfy_n(\flat)\right)$, where
 
\begin{equation}
\label{xkfykfl}
\begin{cases}
\sfx_k(\flat):=\flat_k-\frac12(k-\frac12)
-\frac12(2n-\frac m2) & \text{ for }1\leq k\leq m, \\ 
\sfy_l(\flat):=\flat_l^*-2(l-\frac12)+\frac12(4n+m) & \text{ for }1\leq l\leq n.%
\end{cases}
\end{equation}
\begin{lem}
\label{lem6..3}
The map
$\boldsymbol\Gamma\circ\mathsf{F}^{-1}$ defined initially on 
$\mathsf{F}(\mathrm{H}_{m,n})$ 
extends uniquely to an affine linear map $\Psi:\C^{m+n}\to \g a^*$.
\end{lem}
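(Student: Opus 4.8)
The plan is to identify each hook partition with a lattice point in $\C^{m+n}$ and then observe that both $\boldsymbol\Gamma$ and $\mathsf{F}$ are restrictions of globally defined affine maps written in those coordinates. For $\flat\in\mathrm{H}_{m,n}$ put
\[
\iota(\flat):=(\flat_1,\dots,\flat_m,\flat_1^*,\dots,\flat_n^*)\in\Z_{\ge 0}^{m+n},
\]
where $\flat_l^*=\max\{\flat_l'-m,0\}$ as in \eqref{fordfGAM}. The first point to check is that $\iota$ is injective: the entries $\flat_1,\dots,\flat_m$ record rows $1,\dots,m$ of $\flat$, while rows $m+1,m+2,\dots$ form a partition with at most $n$ columns (since $\flat_{m+1}\le n$), and one checks directly that $\flat_1^*,\dots,\flat_n^*$ are exactly the heights of those columns; hence $\flat$ is recovered from $\iota(\flat)$. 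In particular $\mathsf{F}$ is injective, so $\boldsymbol\Gamma\circ\mathsf{F}^{-1}$ is a well-defined map on $\mathsf{F}(\mathrm{H}_{m,n})$.

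Reading off \eqref{frmLamBk}, \eqref{fordfGAM} and \eqref{xkfykfl}, one has $\boldsymbol\Gamma=B\circ\iota$ and $\mathsf{F}=A\circ\iota$, where $B:\C^{m+n}\to\g a^*$ is the linear map
\[
B(a_1,\dots,a_m,b_1,\dots,b_n):=\sum_{k=1}^m 2a_k\gamma_k+\sum_{l=1}^n 2b_l\gamma_{\oline l},
\]
and $A:\C^{m+n}\to\C^{m+n}$ is translation by the constant vector with entries $-\tfrac12(k-\tfrac12)-\tfrac12(2n-\tfrac m2)$ for $1\le k\le m$ and $-2(l-\tfrac12)+\tfrac12(4n+m)$ for $1\le l\le n$. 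Since $A$ is a bijective affine map and $B$ is linear, the composite $\Psi:=B\circ A^{-1}:\C^{m+n}\to\g a^*$ is affine linear, and on $\mathsf{F}(\mathrm{H}_{m,n})=A\bigl(\iota(\mathrm{H}_{m,n})\bigr)$ it satisfies $\Psi=(B\circ\iota)\circ(A\circ\iota)^{-1}=\boldsymbol\Gamma\circ\mathsf{F}^{-1}$. This proves existence.

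For uniqueness it suffices to know that $\mathsf{F}(\mathrm{H}_{m,n})$ affinely spans $\C^{m+n}$; since $A$ is an affine bijection this is equivalent to $\iota(\mathrm{H}_{m,n})$ affinely spanning $\C^{m+n}$, and since $\iota(\emptyset)=0$ it is equivalent to $\iota(\mathrm{H}_{m,n})$ linearly spanning. Here I would use rectangular hook shapes: writing $\mathsf{e}_1,\dots,\mathsf{e}_{m+n}$ for the standard basis of $\C^{m+n}$, the shape $(1^a)$ gives $\iota\bigl((1^a)\bigr)=\mathsf{e}_1+\cdots+\mathsf{e}_a$ for $1\le a\le m$, while the shape $(b^{m+1})$ (a legal hook since $b\le n$) gives $\iota\bigl((b^{m+1})\bigr)-\iota\bigl((b^m)\bigr)=\mathsf{e}_{m+1}+\cdots+\mathsf{e}_{m+b}$ for $1\le b\le n$; these $m+n$ vectors are linearly independent. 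Hence any two affine linear maps agreeing on $\mathsf{F}(\mathrm{H}_{m,n})$ coincide, which gives uniqueness of $\Psi$.

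Essentially every step is a direct rewriting of the defining formulas, so the only points needing a little care are the spanning assertion used for uniqueness — equivalently, producing $m+n$ hook partitions whose $\iota$-images are affinely independent — and the (standard) combinatorial fact that a hook partition is determined by its first $m$ rows together with the column heights of the part lying below row $m$, which is exactly what makes $\iota$, and hence $\mathsf{F}$, injective.
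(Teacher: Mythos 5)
Your argument is correct and is essentially the paper's own (one-sentence) proof written out in full: both rest on the observation that \eqref{frmLamBk} and \eqref{xkfykfl} express $\boldsymbol{\Gamma}$ and $\mathsf{F}$ as affine functions of the coordinates $(\flat_1,\dots,\flat_m,\flat_1^*,\dots,\flat_n^*)$, with $\mathsf{F}$ differing from these coordinates only by a translation. The one (harmless) divergence is in the uniqueness step, where the paper invokes Zariski density of $\mathsf{F}(\mathrm{H}_{m,n})$ in $\C^{m+n}$ while you verify the weaker but sufficient fact that this set affinely spans $\C^{m+n}$, via the explicit shapes $(1^a)$ and $(b^{m+1})$.
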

\begin{proof}
This is immediate from 
the formulas 
\eqref{fordfGAM} and
\eqref{xkfykfl}, and the Zariski density of  
$\mathsf{F}(\mathrm{H}_{m,n})$ in 
$\C^{m+n}$.
\end{proof}

\begin{dfn}
\label{FrobT}
For $p\in\sP(\g a^*)$, we define its \emph{Frobenius transform} to be
\[
\mathscr{F}(p):=p\circ\Psi.
\]
\end{dfn} 
Note that \begin{equation}
\label{degpdegFpeq}
\deg(p)=\deg(\mathscr{F}(p))
\text{ for every }p\in\sP(\g a^*).
\end{equation}
We now relate the shifted super Jack polynomials $SP_\flat^*$ to the dualized eigenvalue polynomials
$c_\lambda^*$ from Proposition \ref{prpQlam}.
\begin{thm}
\label{thmconnSV}
Let $\lambda\in\mathrm{E}_{m,n,d}^*$
and let $\flat\in\mathrm{H}_{m,n,d}$ be such that $\lambda^*=\boldsymbol\Gamma(\flat)$. Then 
we have 
\begin{equation}
\label{Ql=GGSP*}
\mathscr{F}(c_\lambda^*)=
\frac{d!}{H(\flat)}SP^*_\flat,
\end{equation}
where 
$
H(\flat)
:=\prod_{k\geq 1}\prod_{l=1}^{\flat_k}
\left(
\flat_k-l+1+\frac12(\flat_l'-k)
\right)
$.

\end{thm}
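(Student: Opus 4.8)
The plan is to identify $\mathscr{F}(c_\lambda^*)$ as a member of $\Lambda_{m,n,\frac12}^\natural$ that satisfies the same normalization and vanishing conditions that characterize $\frac{d!}{H(\flat)}SP^*_\flat$ among the basis \eqref{SPbinLL}, and then invoke the uniqueness of such a polynomial. First I would show that $\mathscr{F}(c_\lambda^*)$ actually lies in $\Lambda_{m,n,\frac12}^\natural$. The separate symmetry in $\sfx$ and in $\sfy$ should follow from the Weyl-group invariance built into $\mathbf I(\g a^*)$ (recall $c_\lambda\in\mathbf I(\g a^*)$ by Lemma \ref{cmulZa}, and the analogous statement for $c_\lambda^*$ via the relation \eqref{HC+HC-p} between $\HC^+$ and $\HC^-$): the even restricted Weyl group permutes the $\gamma_k$ among themselves and the $\gamma_{\oline l}$ among themselves, and under the affine map $\Psi$ of Lemma \ref{lem6..3} this becomes permutation of the $\sfx$-variables and of the $\sfy$-variables. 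The extra "wall-crossing" relation on the hyperplanes $\sfx_k+\frac12\sfy_l=0$ is exactly the statement that $c_\lambda^*$ is annihilated by the odd restricted reflection associated to the root $\gamma_k-\gamma_{\oline l}\in\Sigma_\ood^+$, which is again encoded in the description of $\psf^+(\bfZ(\g g))$ via the generators $G_d$ of \eqref{dfnGd} — this is the standard "supersymmetry" of the Harish-Chandra image for $\gl(m|2n)$ after the coordinate shift by the $\rho$-type constants appearing in \eqref{xkfykfl}. I would verify that the particular affine shifts in \eqref{xkfykfl} are precisely the ones that convert the $\gC$-invariance (in the $G_d$ coordinates) into the $\Lambda^\natural_{m,n,\frac12}$-symmetry; this is a bookkeeping check comparing the constants $\frac{m+1}{2}-n-k$ and $\frac{m+1}{2}+n-l$ in \eqref{dfnGd} with those in \eqref{xkfykfl}.

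Next I would pin down the vanishing conditions. By Lemma \ref{lem-clmuvanish}, $c_\lambda(\lambda)=d!$ and $c_\lambda(\mu)=0$ for every other $\mu\in\bigcup_{d'\le d}\mathrm{E}_{m,n,d'}^*$; and by Remark \ref{rmkdgclam} we have $\deg(c_\lambda)\le d$, hence $\deg(c_\lambda^*)\le d$ and $\deg\mathscr{F}(c_\lambda^*)\le d$ by \eqref{degpdegFpeq}. Translating through Proposition \ref{prpQlam} (which gives $c_\lambda(\mu)=c_\lambda^*(\mu^*)$) and then through the Frobenius map, the point $\mu^*=\boldsymbol\Gamma(\flat')$ corresponds to $\mathsf F(\flat')\in\C^{m+n}$, and the condition $\mu\in\bigcup_{d'\le d}\mathrm E_{m,n,d'}^*$ translates into $\flat'\in\bigcup_{d'\le d}\mathrm H_{m,n,d'}$ via the description of $\mathrm E^*_{m,n,d}$ in Remark \ref{prpChWaSe}. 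Thus $\mathscr{F}(c_\lambda^*)$ vanishes at $\mathsf F(\flat')$ for every hook partition $\flat'$ with $|\flat'|\le d$ and $\flat'\ne\flat$, and equals $d!$ at $\mathsf F(\flat)$. These are exactly the conditions $SP^*_\flat$ satisfies (up to the normalization $SP^*_\flat(\mathsf F(\flat))=H(\flat)$ from \cite[Eq. (31)]{SerVes}), so $\mathscr{F}(c_\lambda^*)$ and $\frac{d!}{H(\flat)}SP^*_\flat$ are two elements of $\Lambda^\natural_{m,n,\frac12}$ of degree $\le d$ taking the same prescribed values on the "staircase" set $\{\mathsf F(\flat'):|\flat'|\le d\}$.

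Finally I would invoke the extra-vanishing / interpolation characterization of the shifted super Jack polynomials: an element of $\Lambda^\natural_{m,n,\frac12}$ of degree $\le d$ is determined by its values at the points $\mathsf F(\flat')$, $|\flat'|\le d$ (this is the content of Okounkov-Olshanski-type extra-vanishing in the super setting, as established in \cite{SerVes}). Hence $\mathscr{F}(c_\lambda^*)=\frac{d!}{H(\flat)}SP^*_\flat$, which is \eqref{Ql=GGSP*}.

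The main obstacle I anticipate is the verification that $\mathscr{F}(c_\lambda^*)$ genuinely satisfies the wall-crossing relation defining $\Lambda^\natural_{m,n,\frac12}$ — equivalently, that $c_\lambda^*$ (the Harish-Chandra image, in the $\HC^-$ normalization, of a central element) is supersymmetric in the Sergeev-Veselov sense after the shift \eqref{xkfykfl}. This requires carefully matching the odd-root supersymmetry inside the known description of $\psf^+(\bfZ(\g g))$ in terms of the $G_d$ with the precise affine constants of the Frobenius map, and making sure the $\rho$-shift implicit in $\HC^-$ versus $\HC^+$ (handled by \eqref{HC+HC-p}) is accounted for with the correct sign. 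The degree bound $\deg\le d$, which is indispensable for the uniqueness step, rests on the refined Theorem \ref{prpgw} (that $z_\lambda$ can be taken in $\bfU^d(\g g)$) together with the fact that $\HC^-$ does not increase the filtration degree — this is exactly the "refinement" the introduction flags as needed for Theorem \ref{thmconnSV}.
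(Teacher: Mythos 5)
Your proposal is correct and follows essentially the same route as the paper: establish the degree bound via Proposition \ref{prpQlam} and \eqref{degpdegFpeq}, show $\mathscr{F}(c_\lambda^*)\in\Lambda^\natural_{m,n,\frac12}$ by passing through \eqref{HC+HC-p} and the generators $G_d$ of \eqref{dfnGd} (the paper carries out the bookkeeping you defer, exhibiting the explicit generators \eqref{2x+ny-n}), and conclude from the vanishing/normalization data via the uniqueness of Theorem \ref{thm-unqclam} or the interpolation characterization in \cite{SerVes}. The only cosmetic slip is attributing $c_\lambda\in\mathbf I(\g a^*)$ to Lemma \ref{cmulZa}; that membership is actually established in Step 1 of the proof of Theorem \ref{thm-unqclam}.
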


\begin{proof}
By 
\eqref{degpdegFpeq},
Proposition \ref{prpQlam},
 and the results of \cite[Sec. 6]{SerVes},
both sides of \eqref{Ql=GGSP*} are in
$\bigoplus_{d'=0}^d\sP^{d'}(\C^{m+n})$.
Next we prove that \begin{equation}
\label{FQinL}
\mathscr{F}(c^*_\lambda)
\in
\Lambda^\natural_{m,n,\frac12}\,. 
\end{equation}
Note that $\bfZ(\g g)$ is invariant under the anti-automorphism $\omega$.
Let $z_\lambda\in\bfZ(\g g)\cap\bfU^d(\g g)$ be 
as in \eqref{choicezl}. 
From \eqref{HC+HC-p}
 we obtain 
 $\HC^-(z_\lambda)=\omega(\HC^+(\omega(z_\lambda)))$, and thus 
$\HC^-(z_\lambda)$ is in the subalgebra 
of $\bfU(\g a)\cong\sS(\g a)$ that is 
generated by $\omega(\sfq(G_d))$ for $d\geq 1$, where 
$G_d$ is given in \eqref{dfnGd}. Observe that
$\sfq(G_d)$ is obtained from $G_d$ by the substitutions 
\[
E_{k,k}\mapsto h_k\text{ for }1\leq k\leq m
\ \text{ and }\
E_{\oline{2l-1},\oline{2l-1}},E_{\oline{2l},\oline{2l}}\mapsto \frac{1}{2}h_{\oline l}
\text{ for }
1\leq l\leq n,
\]
where
$\{h_i\,:\,i\in\mathcal{I}_{m,n}\}$ is the basis for $\g a$ that
is dual to the $\g a^*$-basis $\{\gamma_i\,:\,i\in\mathcal{I}_{m,n}\}$, defined in
\eqref{gamkldf}.
By a straightforward calculation we can verify that
 $\mathscr{F}(c^*_\lambda)$ is in the algebra generated by the polynomials
\begin{equation}
\label{2x+ny-n}
\sum_{k=1}^m(2\sfx_k+n)^d+
(-1)^{d-1}\sum_{l=1}^n\left(\sfy_l-n+\frac12\right)^d+\left(\sfy_l-n-\frac12\right)^d
\end{equation}
for every $d\geq 1$. Furthermore, the polynomials 
\eqref{2x+ny-n} belong to $\Lambda_{m,n,\frac12}^\natural$. 
This completes the proof of 
\eqref{FQinL}.

Next we fix $\mu\in\mathrm{E}_{m,n}^*$
and choose 
$\flat^{(\mu)}\in \mathrm{H}_{m,n}$
 such that
$\mu^*=\boldsymbol\Gamma(\flat^{(\mu)})$,
where $\mu^*\in\mathrm{E}_{m,n}$ is
 as in Definition \ref{Dfndualmu}. 
Then
\[
\textstyle
\mathscr{F}(c^*_\lambda)
\left(\mathsf{F}\left(\flat^{(\mu)}\right)\right)
=
c^*_\lambda(\mu^*)=c_\lambda(\mu)
.
\]
From \cite[Eq. (31)]{SerVes} we have
$SP^*_\flat\left(\mathsf{F}\left(\flat^{(\mu)}\right)
\right)=H\left(\flat^{(\mu)}\right)$ for $\mu=\lambda$, and 
$SP_\flat^*\left(\mathsf{F}\left(\flat^{(\mu)}\right)\right)=0$
for all other $\mu\in\bigcup_{d'=0}^d\mathrm{E}_{m,n,d'}^*$.
The equality \eqref{Ql=GGSP*} now follows from
the latter vanishing property and 
Theorem \ref{thm-unqclam},
or alternatively from the discussion immediately above \cite[Eq. (31)]{SerVes}.
\end{proof}
\vspace{2mm}

\begin{center}
\textbf{\large Appendix}
\end{center}

\begin{alphasection}

\section{Proof of Proposition \ref{DGBRVVV}}
\label{sec-pflem}
In this appendix we prove Proposition \ref{DGBRVVV}. The proof of this proposition is similar
to the proof of Proposition \ref{le-dlam}, although somewhat more elaborate.
Recall the generators
$\{x_{i,j}\}_{i,j\in\mathcal I_{m,2n}}$ for 
$\sS(W)$ 
and
$\{y_{i,j}\}_{i,j\in\mathcal I_{m,2n}}$ for $\sP(W)$, defined in
\eqref{eq-dfxijyij}. 
We consider the total ordering $\prec$ on $\mathcal I_{m,2n}$ given by
\[
1\prec\cdots\prec m\prec \oline 1\prec\cdots\prec \oline{2n}.
\]
Set
$
\mathcal I':=\left\{\big(\oline{2k-1},
\oline{2k}\big)\,:\,1\leq k\leq n\right\}
$
and
$\mathcal I'':=
\left\{
(i,j)\in\mathcal I_{m,2n}\times \mathcal I_{m,2n}\ ,\ 
i\prec j
\right\}
\setminus
\mathcal I'
$.
For every $a_1,\ldots,a_m,a_{\oline{1}},\ldots,a_{\oline{n}}\in\C$,
we set 
\begin{equation}
\label{dfbfx}
\bfx:=-\sum_{k=1}^m a_k h_k+\sum_{l=1}^n a_{\oline{l}}h_{\oline{l}}
\in\g a,
\end{equation} where $\{h_i\,:\,i\in\mathcal I_{m,n}\}$ is the basis of $\g a$ that is dual to $\{\gamma_i\,:\,i\in\mathcal I_{m,n}\}$
defined in \eqref{gamkldf}. Let 
$
\sfh_{\bfx}:\sP(\g a)\cong\sS(\g a^*)\to \C
$ be as defined  in 
\eqref{dfheta}.
For any $\bfx\in\g a$ as above, set
\[
\xi_{\bfx}:=
\frac12\sum_{k=1}^ma_kx_{k,k}+\sum_{l=1}^n a_{\oline l}x_{\oline{2l-1},\oline{2l}}\in W,
\]
and let  
$
\sfh_{\xi_{\bfx}}:\sP(W)\cong\sS(W^*)\to \C
$
be as defined  in \eqref{dfheta}.
Observe that $\sfh_{\bfx}\circ\iota_\g a^*=\sfh_{\xi_\bfx}$ where
$\iota_\g a^*$ is as defined in \eqref{dfiotaa*}.
In particular, for every $a\in\sP(W)$,  the set 
\[
S_a:=\left\{\bfx\in\g a\ :\ \sfh_{\xi_{\bfx}}(a)=0\right\}
\]
is Zariski closed in $\g a$.

Let $\partial_{i,j}:=\partial_{x_{i,j}}$ denote the superderivation of 
$\sP(W)$ that is defined according to \eqref{pww*}.
\begin{prp}
Let $\mathbf d\in\sP(W)^\g k$
such that 
$\sfh_{\xi_{\bfx}}(\mathbf d)= 0$
 for every 
$\bfx\in\g a$. Then
\begin{equation} 
\label{hvcthht}
\sfh_{\xi_{\bfx}}
\big(
\partial_{{i_1,j_1}}\cdots\partial_{{i_s,j_s}}
\mathbf d\big)=0\
\text{ for all }
\bfx\in\g a,\ s\geq 1,\ 
\text{and }(i_1,j_1),\ldots,(i_s,j_s)\in\mathcal I''.
\end{equation}

\end{prp}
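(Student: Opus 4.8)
The plan is to prove \eqref{hvcthht} by induction on $s$, invoking at stage $s\ge 1$ the statement both for $s-1$ and for $s-2$; the base of the induction (effectively $s=0$) is the hypothesis $\sfh_{\xi_\bfx}(\mathbf d)=0$. Since the operators $\partial_{i,j}$ super‑commute pairwise and $\sfh_{\xi_\bfx}$ kills the odd part of $\sP(W)$, throughout we reorder the factors $\partial_{i_t,j_t}$ freely (keeping track of signs) and assume the expressions in play are even.

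The engine of the proof is the following. For each $(i_1,j_1)\in\mathcal I''$ one can choose an element $x$ of the spanning set of $\g k$ in Remark \ref{rmkbasisK} with the two properties: \textbf{(a)} every $E$‑summand $E_{a,b}$ of $x$ has row index $a\in\{i_1,j_1\}$; and \textbf{(b)} there is a nonzero linear combination $\ell(\bfx)$ of the coordinates $a_1,\dots,a_m,a_{\oline 1},\dots,a_{\oline n}$ with
\[
\sfh_{\xi_\bfx}\big(\check\rho(x)g\big)=\ell(\bfx)\,\sfh_{\xi_\bfx}\big(\partial_{i_1,j_1}g\big)\qquad\text{for every }g\in\sP(W).
\]
For instance, for $(i_1,j_1)=(k,l)$ with $k<l\le m$ one takes $x=E_{k,l}-E_{l,k}$, with $\ell(\bfx)=a_k-a_l$; for $(i_1,j_1)=(k,\oline{2l})$ one takes $x=E_{k,\oline{2l-1}}+E_{\oline{2l},k}$, with $\ell(\bfx)=-(a_k+a_{\oline l})$; and there are analogous choices for the other families of pairs in $\mathcal I''$. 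Identity (b) is read off from \eqref{ppooll}: expanding $\sfh_{\xi_\bfx}\big(\check\rho(E_{a,b})g\big)=-(-1)^{|a||b|}\sum_r(-1)^{|r|}\sfh_{\xi_\bfx}(y_{r,b})\,\sfh_{\xi_\bfx}\big(\partial_{r,a}g\big)$ and noting that $\sfh_{\xi_\bfx}(y_{r,b})$ is nonzero only for $(r,b)$ of the form $(k,k)$ with $k\le m$, $(\oline{2u-1},\oline{2u})$, or $(\oline{2u},\oline{2u-1})$ (with values $a_k$, $a_{\oline u}$, $-a_{\oline u}$), exactly one summand survives for each $E$‑term of $x$, and for the chosen $x$ both survivors carry the single operator $\partial_{x_{i_1,j_1}}$.

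Now fix $(i_1,j_1),\dots,(i_s,j_s)\in\mathcal I''$, set $Q:=\partial_{i_2,j_2}\cdots\partial_{i_s,j_s}$ (with $Q:=\id$ when $s=1$), and let $x$ be attached to $(i_1,j_1)$ as in the previous paragraph. Since $\check\rho(x)\mathbf d=0$, we have $\check\rho(x)(Q\mathbf d)=[\check\rho(x),Q]\mathbf d$, so (b) yields
\[
\ell(\bfx)\,\sfh_{\xi_\bfx}\big(\partial_{i_1,j_1}Q\mathbf d\big)=\sfh_{\xi_\bfx}\big([\check\rho(x),Q]\mathbf d\big).
\]
Because $\check\rho(x)$ has order one and the $\partial_{i_t,j_t}$ have constant coefficients, $[\check\rho(x),Q]$ is, up to signs, a sum of operators obtained from $Q$ by replacing a single factor $\partial_{i_t,j_t}$ by $[\check\rho(x),\partial_{i_t,j_t}]$, and \eqref{ppooll} shows $[\check\rho(E_{a,b}),\partial_{i_t,j_t}]$ is a scalar multiple of $\partial_{x_{i_t,a}}$ (if $b=j_t$), of $\partial_{x_{j_t,a}}$ (if $b=i_t$), or zero. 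By (a), each resulting term is then, up to a scalar, $\mathbf d$ acted on by $s-2$ of the operators $\partial_{i_2,j_2},\dots,\partial_{i_s,j_s}$ together with a single operator $\partial_{x_{p,q}}$ in which $q\in\{i_1,j_1\}$ and $p$ is an index of one of the $(i_t,j_t)$. Reordering so that $p\preceq q$: either $x_{p,q}=0$ (when $p=q$ is odd‑indexed), or $(p,q)\in\mathcal I''$, or $x_{p,q}$ lies in $L:=\iota_\g a(\g a)=\spn_\C\{x_{k,k}:1\le k\le m\}\cup\{x_{\oline{2u-1},\oline{2u}}:1\le u\le n\}$ (the case where $(p,q)$ is diagonal with $p\le m$, or $(p,q)\in\mathcal I'$). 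In the first case the term vanishes; in the second it is $\sfh_{\xi_\bfx}$ of $s-1$ operators from $\mathcal I''$ applied to $\mathbf d$, which is $0$ by the inductive hypothesis at stage $s-1$.

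In the third case, write the distinguished factor as $\partial_{\iota_\g a(\mathsf v)}$ with $\mathsf v\in\g a$, move it to the front, and let $h$ be the result of applying the remaining $s-2$ operators — all from $\mathcal I''$ — to $\mathbf d$; by the inductive hypothesis at stage $s-2$, $h$ vanishes on all of $L=\{\xi_\bfy:\bfy\in\g a\}$, and since $\xi_\bfx+t\,\iota_\g a(\mathsf v)=\xi_{\bfx+t\mathsf v}\in L$, differentiating at $t=0$ gives $\sfh_{\xi_\bfx}\big(\partial_{\iota_\g a(\mathsf v)}h\big)=0$. Thus $\sfh_{\xi_\bfx}\big([\check\rho(x),Q]\mathbf d\big)=0$ for every $\bfx\in\g a$, whence $\ell(\bfx)\,\sfh_{\xi_\bfx}(\partial_{i_1,j_1}Q\mathbf d)=0$ for all $\bfx$; as $\ell$ is not identically zero and $\bfx\mapsto\sfh_{\xi_\bfx}(\partial_{i_1,j_1}Q\mathbf d)$ is a polynomial vanishing on the Zariski‑dense set $\{\bfx:\ell(\bfx)\ne 0\}$, it vanishes identically, which is \eqref{hvcthht}. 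I expect the genuinely laborious step to be part \textbf{(b)} above: for each family of pairs comprising $\mathcal I''$, and for each $E$‑summand of the chosen $x\in\g k$, one must check that the surviving derivation is exactly $\partial_{x_{i_1,j_1}}$, that the two contributions combine to a nonzero $\ell$, and that the row indices of $x$ lie in $\{i_1,j_1\}$ — a finite but sign‑sensitive computation based on the normalization \eqref{beiej=c} of $\beta$; once it is in hand, the inductive mechanism is formal.
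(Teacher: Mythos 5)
Your argument is correct, and for $s=1$ it reduces to exactly the computation the paper performs. For $s\geq 2$, however, your inductive step is organized genuinely differently from the paper's. The paper applies $s+1$ operators $\check\rho(x_1)\cdots\check\rho(x_{s+1})$ to $\mathbf d$ simultaneously, expands the full product, kills all lower-order contraction terms (the remainder $R_{\mathbf d}$) by the induction hypothesis, and then must show that the coefficient of the surviving term is a nonzero polynomial $\psi(\bfx)$ of degree $s+1$; this is done by a combinatorial argument isolating a monomial that occurs exactly once in a sum over subsets $S\subseteq\{1,\ldots,s+1\}$. You instead apply a single $x\in\g k$ to the already differentiated $Q\mathbf d$ and push it through as a supercommutator. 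The price of your route is that the correction terms $[\check\rho(x),\partial_{i_t,j_t}]$ can produce a derivative $\partial_{x_{p,q}}$ with $(p,q)$ diagonal or in $\mathcal I'$, which falls outside the scope of the induction hypothesis as stated; your third case --- writing such a factor as a derivative in a direction lying in $\iota_{\g a}(\g a)$ and differentiating the stage-$(s-2)$ identity $\sfh_{\xi_{\bfy}}(h)\equiv 0$ along the corresponding line in $\g a$ --- handles exactly this, and that device has no counterpart in the paper. What your version buys is a much simpler leading coefficient (a single nonzero linear form $\ell(\bfx)$, rather than a degree-$(s+1)$ polynomial whose nonvanishing needs the leading-monomial argument); what it costs is the three-way case analysis and a double induction invoking stages $s-1$ and $s-2$. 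The residual burden you flag --- verifying properties (a) and (b) for each family of pairs in $\mathcal I''$ against the spanning set of Remark \ref{rmkbasisK} --- is real but routine, and is of exactly the same nature as the paper's unelaborated claim that each spanning element admits a pair $(p,q)$ with $\check\rho(x)=\sum_r(\pm y_{r,p}\partial_{r,q}\pm y_{r,q^\dagger}\partial_{r,p^\dagger})$; the samples you checked are correct, and in every case $\ell$ is $\pm a_{?}\pm a_{?}$ in two distinct coordinates, hence not identically zero.
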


\begin{proof}
We use induction on $s$. 
First assume that $s=1$. 
Then
\begin{equation}
\label{chrRsph}
\check\rho(x)\mathbf d\text{ for all }x\in\g k.
\end{equation}
Set $x:=E_{k,l}-E_{l,k}$ for $1\leq k<l\leq m$. Then
\eqref{chrRsph} and \eqref{ppooll} imply that
\begin{align*}
0&=\sfh_{\xi_{\bfx}}(\check\rho(x)\mathbf d)\\
&=-\sum_{r\in\mathcal I_{m,2n}}
(-1)^{|r|}\sfh_{\xi_{\bfx}}(y_{r,l})
\sfh_{\xi_{\bfx}}(\partial_{{r,k}}\mathbf d)
+
\sum_{r\in\mathcal I_{m,2n}}
(-1)^{|r|}\sfh_{\xi_{\bfx}}(y_{r,k})
\sfh_{\xi_{\bfx}}(\partial_{{r,l}}\mathbf d)\\
&=(-a_l+a_k)\sfh_{\xi_{\bfx}}(\partial_{{k,l}}\mathbf d),
\end{align*}
from which it follows that 
$\sfh_{\xi_{\bfx}}(\partial_{{k,l}}d_\lambda)=0$ for all $\bfx\in\g a$
as in \eqref{dfbfx} which satisfy
$a_k\neq a_l$.
But the set of all $\bfx\in\g a$ given as in \eqref{dfbfx} which satisfy $a_k\neq a_l$ for all $1\leq k<l\leq m$ is a Zariski dense subset 
of $\g a$, and it follows that 
$\sfh_{\xi_{\bfx}}(\partial_{x_{k,l}}\mathbf d)=0$ for every $\bfx\in\g a$.
A similar argument for each of the cases (ii)--(vi) of 
Remark \ref{rmkbasisK} (where in cases (ii)--(iv) we assume $k\neq l$) proves \eqref{hvcthht} for $s=1$.

Next 
we define an involution
$i\mapsto i^\dagger$ on $\mathcal I_{m,2n}$
by
\[ 
k^\dagger:=k\text{ for }1\leq k\leq m,\ 
(\oline{2l-1})^\dagger:=\oline{2l}\text{ for }1\leq l\leq n,\text{ and }
(\oline{2l})^\dagger:=\oline{2l-1}\text{ for }1\leq l\leq n.
\]
Let $\mathsf f:\mathcal I_{m,2n}\to\mathcal I_{m,n}$
be  defined by $\mathsf f(k)=k$ for $1\leq k\leq m$ and $\mathsf f\big(\oline{2l-1}\big)
=\mathsf f\big(\oline{2l}\big)=\oline{l}$ for $1\leq l\leq n$.
Fix an element $x\in\g k$ that belongs to the spanning set of $\g k$  given in Remark \ref{rmkbasisK}. (When $x$ is chosen from one of the cases (ii)--(iv)
in Remark \ref{rmkbasisK},
we assume that $k\neq l$.) Then 
there exist $p,q\in\mathcal I_{m,2n}$ such that
\begin{equation}
\label{pprecqqpre}
p\prec q,\ p^\dagger\prec q^\dagger,\ \text{and } 
\check\rho(x)=\sum_{r\in\mathcal I_{m,2n}}
\left(\pm y_{r,p}^{}\partial_{{r,q}^{}}
\pm y_{r,q^\dagger_{}}
\partial_{{r,p^\dagger}}\right).
\end{equation}
Now fix $x_1,\ldots,x_{s+1}\in\g k$ such that every
$x_k$, for $1\leq k\leq s+1$, 
is an element of the spanning set of $\g k$ given in Remark \ref{rmkbasisK}.  For every $1\leq k\leq s+1$, if $x_k$ is chosen from  the cases (ii)--(iv)
in Remark \ref{rmkbasisK},
then we assume that $k\neq l$. Choose 
$(p_1,q_1),\ldots,(p_{s+1},q_{s+1})\in\mathcal I_{m,2n}\times\mathcal I_{m,2n}$ corresponding to 
$x_1,\ldots,x_{s+1}$ which satisfy
\eqref{pprecqqpre}.
For $1\leq u\leq s+1$, we define
\[
p_{u,S}:=\begin{cases}
p_u& \text{ if }u\in S,\\
(q_u)^\dagger&\text{ if }u\not\in S,
\end{cases}
\ \ \text{ and }\ \
q_{u,S}:=\begin{cases}
q_u& \text{ if }u\in S,\\
(p_u)^\dagger&\text{ if }u\not\in S.
\end{cases}
\]
Then
\begin{align}
\label{x1xs+1s}
\check\rho&(x_1)\cdots\check\rho(x_{s+1})\mathbf d
\\
&=\sum_{S\subseteq \{1,\ldots,s+1\}}\
\sum_{r_1,\ldots,r_{s+1}\in\mathcal I_{m,2n}}
\left(\pm y_{r_1,p_{1,S}}^{}\cdots y_{r_{s+1},p_{s+1,S}}^{}
\partial_{{r_1,q_{1,S}}^{}}^{}\cdots
\partial_{{r_{s+1},q_{s+1,S}}^{}}(\mathbf d)\right)+R_{\mathbf d},
\notag
\end{align}
where 
$R_{\mathbf d}$ is a sum of terms of the form
$b\partial_{p_1',q_1'}\cdots \partial_{p_{t}', q_{t}'}(\mathbf d)$, 
with $b\in\sP(W)$ and $t\leq s$.
By the induction hypothesis, $\sfh_{\xi_\bfx}(R_{\mathbf d})=0$ for every $\bfx\in\g a$. 
Thus \eqref{x1xs+1s} implies that
\begin{align}
\label{Ssub1s++1}
0&=\sfh_{\xi_\bfx}
(\check\rho(x_1)\cdots\check\rho(x_{s+1})\mathbf d)\\
&=
\sum_{S\subseteq \{1,\ldots,s+1\}}
\pm
a_{\mathsf f(p_{1,S}^{})}^{}\cdots
a_{\mathsf f(p_{s+1,S}^{})}^{}
\sfh_{\xi_\bfx}\left(
\partial_{{(p_1)^\dagger,q_1^{}}}
\cdots
\partial_{{(p_{s+1})^\dagger,q_{s+1}^{}}}
\mathbf d\right).
\notag
\end{align}
From
\eqref{pprecqqpre}
it follows that $p_u\prec (q_u)^\dagger$ for every $1\leq u\leq s+1$.
 the monomial
\[a_{\mathsf f(p_{1}^{})}^{}\cdots
a_{\mathsf f(p_{s+1}^{})}^{}
\]
appears in \eqref{Ssub1s++1} exactly once. Therefore the right hand side of 
\eqref{Ssub1s++1} can be expressed as
\[
\psi(\bfx)
\sfh_{\xi_\bfx}\left(
\partial_{{(p_1)^\dagger,q_1^{}}}
\cdots
\partial_{{(p_{s+1})^\dagger,q_{s+1}^{}}}
\mathbf d\right),
\]
where 
$\psi\in\sP(\g a)$ is a nonzero polynomial. It follows that 
the set consisting of all $\bfx\in\g a$ which satisfy $\psi(\bfx)\neq 0$ is a Zariski dense subset of $\g a$.
Consequently, the set
\[
\left\{
\bfx\in\C^{m+n}\ :\ 
\sfh_{\xi_\bfx}\left(
\partial_{{(p_1)^\dagger,q_1^{}}}
\cdots
\partial_{{(p_{s+1})^\dagger,q_{s+1}^{}}}
\mathbf d\right)=0
\right\}
\]
is both Zariski dense and Zariski closed. 
This completes the proof of \eqref{hvcthht}.
\end{proof}
We are now ready to prove Proposition \ref{DGBRVVV}.

\begin{proof}
By the equality $\sfh_{\bfx}\circ\iota_\g a^*=\sfh_{\xi_\bfx}$,
it suffices to show that
for every $\mathbf d\in\sP(W)^\g k$,
if
$\sfh_{\xi_{\bfx}}(\mathbf d)= 0$
 for every 
$\bfx\in\g a$,
 then 
$\mathbf d=0$.
Let $\mathscr D\sseq\sP(W)$ be the subalgebra 
generated by 
\[
\Big\{y_{k,k}\,:\,1\leq k\leq m
\Big
\}
\cup
\Big\{y_{\oline{2l-1},\oline{2l}}\,:\,1\leq l\leq n
\Big\}.
\]
Then
$
\mathbf d=\sum_{S\subseteq\mathcal I''}a_S^{}y_S^{}
$,
where $y_S^{}:=\prod_{(i,j)\in S}y_{i,j}$ and 
$a_S^{}\in\mathscr D$ for every $S\subseteq \mathcal I''$. 
Now we fix $S\subseteq \mathcal I''$ and set
$\widetilde\partial:=\prod_{(i,j)\in S}\partial_{{i,j}}
$, so that $\widetilde\partial(\mathbf d)=za_S^{}$ for some scalar $z\neq 0$. By \eqref{hvcthht}, 
\begin{equation}
\label{hthetAS}
\sfh_{\xi_\bfx}(a_S^{})=
\frac{1}{z}\sfh_{\xi_\bfx}
\left(\widetilde\partial(\mathbf d)\right)=0\text{ for every }
\bfx\in\g a.
\end{equation}
From 
\eqref{hthetAS} it follows that $a_S^{}=0$. Since $S\subseteq \mathcal I''$ is arbitrary, we obtain $\mathbf d=0$.
\end{proof}

\section{The Capelli problem for
$\g{gl}(V)\times\g{gl}(V)$ acting on $V\otimes V^*$}
\label{appxB}
In this appendix, we show that the main results of Sections 
\ref{prfof571}--\ref{SecRelSer}, including the abstract Capelli theorem and the
relation between the eigenvalue polynomials $c_\lambda$ and 
the shifted super Jack polynomials of 
\cite[Eq. (31)]{SerVes}, extend to 
the case of 
$\gl(V)\times \gl(V)$ acting on $W:=V\otimes V^*$, where $V:=\C^{m|n}$. These extensions can be  proved along the same lines. However, they can also be deduced from the results of 
\cite{Molev}, and we sketch the necessary arguments below.

Recall the triangular decomposition 
$\gl(m|n)=\g n^-\oplus\g h\oplus\g n^+$  from Section \ref{prfof571}.
Set 
\begin{equation}
\label{canoglgl}
\g g:=\gl(V)\times\gl(V)\cong \gl(m|n)\times \gl(m|n).
\end{equation}
Recall that $\mathrm{H}_{m,n}$
is the set of 
$(m,n)$-hook partitions, as in Definition
\ref{dfnhookprn}. 
We define a map 
$\boldsymbol\Gamma:\mathrm{H}_{m,n}\to\g h^*$ by
$\boldsymbol\Gamma(\flat):=
\sum_{k=1}^m\flat_k\eps_k+\sum_{l=1}^n
\flat_{ l}^*\eps_{\oline l}
$,
where
$\flat_l^*:=\max\{\flat'_l-m,0\}$.
Set 
$\breve{\mathrm{E}}_{m,n,d}:=\boldsymbol\Gamma(\mathrm{H}_{m,n,d})$, 
and let
$
\breve{\mathrm{E}}_{m,n}
:=\bigcup_{d=0}^\infty
\breve{\mathrm{E}}_{m,n,d}$.
For every $\flat\in{\mathrm{H}}_{m,n}$,
let $s_\flat^{}$
denote the \emph{supersymmetric Schur 
polynomial} defined in \cite[Eq. (0.2)]{Molev}, and let
$s_\flat^*$ denote the 
\emph{shifted supersymmetric Schur polynomial}
defined in \cite[Sec. 7]{Molev}.

We can decompose $\sP(W)$ and $\sS(W)$ into irreducible $\g g$-modules, that is, 
\begin{equation}
\label{adv-PW}
\sP^d(W)\cong
\bigoplus_{\mu\in\breve{\mathrm{E}}_{m,n,d}}
V_\mu^*\otimes V_\mu^{}
\,\text{ and }\,
\sS^d(W)\cong
\bigoplus_{
\mu\in\breve{\mathrm{E}}_{m,n,d}}
V_\mu^{}\otimes V_\mu^{*},
\end{equation}
where $V_\mu$ is the irreducible  $\gl(V)$-module of highest weight $\mu$, and $V_\mu^*$
is the contragredient of $V_\mu$.
An argument similar to the proof of
Lemma \ref{prp-g-inv-sPWW}, and based on a decomposition similar to 
\eqref{spdgCB},
implies that
\[
\sPD^d(W)^\g g
\cong
\bigoplus_{k=0}^d
\left(\sP^k(W)\otimes \sS^k(W)\right)^\g g
\cong
\bigoplus_{k=0}^d\,\bigoplus_{\lambda\in \breve{\mathrm{E}}_{m,n,k}}\C D_\lambda,
\]
where $D_\lambda$ is the $\g g$-invariant differential operator that corresponds   to the identity map in
$\Hom_{\gl(V)\times\gl(V)}^{}(V_\lambda^*\otimes V_\lambda^{},V_\lambda^{*}\otimes V_\lambda^{})$.
As in \eqref{ppooll}, the $\g g$-action on
$\sP(W)$ can be realized by polarization operators, and therefore we obtain a Lie superalgebra homomorphism 
$\g g\to\sPD(W)$. The latter map 
extends to a homomorphism of associative superalgebras 
\begin{equation}
\label{rhocheckII}
\check\rho:\bfU(\g g)\to \sPD(W).
\end{equation} 
There exists a canonical tensor product decomposition
$
\bfU(\g g)\cong\bfU(\gl(V))\otimes \bfU(\gl(V))
$
corresponding to \eqref{canoglgl}.
Set 
$
\bfZ^d(\gl(V)):=\bfZ(\gl(V))\cap\bfU^d(\gl(V))
$
for every integer $d\geq 0$.
The next theorem extends Theorem
\ref{prpgw}. 
\begin{thm}
\label{thabsscap}
\emph{(Abstract Capelli Theorem for $W:=V\otimes V^*$.)}
The restrictions
\[
\bfZ^d(\gl(V))\otimes 1\to\sPD^d(W)^{\g g}
\text
{ and }
1\otimes\bfZ^d(\gl(V))\to\sPD^d(W)^{\g g}
\]
of the map $\check\rho$ given in \eqref{rhocheckII}
are surjective.
\end{thm}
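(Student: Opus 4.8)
The plan is to deduce Theorem~\ref{thabsscap} from three ingredients: the decomposition \eqref{adv-PW}, the structure of the centre $\bfZ(\gl(m|n))$ under the Harish--Chandra homomorphism (as recalled in Section~\ref{SecRelSer} and the references cited there, e.g.\ \cite{sergeev82,sergeev3,KacZ,Gorelik} and \cite[Sec.~2.2.3]{ChWabook}), and Molev's theorem \cite[Sec.~7]{Molev} that the shifted supersymmetric Schur polynomials $s_\flat^*$, $\flat\in\mathrm H_{m,n}$, form a basis of the algebra of shifted supersymmetric polynomials. It is enough to treat the map $\bfZ^d(\gl(V))\otimes 1\to\sPD^d(W)^{\g g}$, the case $1\otimes\bfZ^d(\gl(V))$ being symmetric upon interchanging the two tensor factors and the roles of $\sP(W)$ and $\sS(W)$ in \eqref{adv-PW}.

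First I would check well-definedness: every element of $\bfZ(\gl(V))\otimes 1$ commutes with $\bfU(\gl(V))\otimes 1$ by centrality and with $1\otimes\bfU(\gl(V))$ trivially, so its image under the homomorphism $\check\rho$ of \eqref{rhocheckII} commutes with $\check\rho(\g g)$ and hence lies in $\sPD(W)^{\g g}$; since $\check\rho(\gl(V)\otimes 1)\subseteq\sPD^1(W)$, the image of $\bfZ^d(\gl(V))\otimes 1$ lies in $\sPD^d(W)^{\g g}$. Next I would prove that $\check\rho$ is injective on $\bfZ(\gl(V))\otimes 1$: by \eqref{adv-PW} the operator $\check\rho(z\otimes 1)$ acts on each summand $V_\mu^*\otimes V_\mu$ of $\sP(W)$ by the scalar through which $z$ acts on the $\gl(V)$-module $V_\mu^*$, which is a polynomial function of the highest weight of $V_\mu^*$ built from the Harish--Chandra image of $z$; since $\breve{\mathrm E}_{m,n}$ and (by the explicit formula for contragredients) the set of highest weights of the $V_\mu^*$ are both Zariski dense in $\g h^*$, the vanishing of $\check\rho(z\otimes 1)$ in $\sPD(W)\subseteq\mathrm{End}_\C(\sP(W))$ forces the Harish--Chandra image of $z$, and hence $z$, to vanish. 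Therefore $\dim\check\rho\bigl(\bfZ^d(\gl(V))\otimes 1\bigr)=\dim\bfZ^d(\gl(V))$.

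The proof then closes with a dimension count. The basis $\{D_\lambda\}$ displayed just before the theorem gives $\dim\sPD^d(W)^{\g g}=\#\{\flat\in\mathrm H_{m,n}:|\flat|\le d\}$. On the other hand, the Harish--Chandra homomorphism identifies $\bfZ(\gl(m|n))$ with the algebra of shifted supersymmetric polynomials compatibly with filtrations, so that a central element lies in $\bfU^d(\gl(m|n))$ precisely when its image has degree $\le d$ (this is visible from the explicit generators recalled in Section~\ref{SecRelSer}, whose orders and degrees match); together with Molev's basis theorem and $\deg s_\flat^*=|\flat|$, this gives $\dim\bfZ^d(\gl(V))=\#\{\flat\in\mathrm H_{m,n}:|\flat|\le d\}$ as well. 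Hence $\check\rho\bigl(\bfZ^d(\gl(V))\otimes 1\bigr)$ is a subspace of $\sPD^d(W)^{\g g}$ of the same finite dimension, so they coincide, which is the asserted surjectivity.

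I expect the main obstacle to be bookkeeping rather than any new idea: one must pin down (a) that the Harish--Chandra isomorphism for $\gl(m|n)$ carries the PBW filtration \emph{onto} the degree filtration, so that $\bfZ^d(\gl(V))$ hits the whole degree-$\le d$ part, and (b) the Zariski density in $\g h^*$ of the highest weights of the contragredient modules $V_\mu^*$ occurring in $\sP(W)$. It is in (b) that the non-linearity of the duality $\mu\mapsto\mu^*$ (cf.\ Proposition~\ref{prpQlam}) has to be handled by a density argument rather than by a substitution. As an alternative to the dimension count one could invoke Molev's explicit Capelli identity, which presents each $D_\lambda$ directly as a scalar multiple of $\check\rho(C_\flat\otimes 1)$ for a central element $C_\flat$ with Harish--Chandra image $s_\flat^*$; this route also yields explicit preimages, at the cost of re-importing the same density bookkeeping in the form of matching vanishing conditions.
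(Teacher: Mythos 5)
Your proposal is correct and follows essentially the same route as the paper: the paper also reduces the statement to a dimension count in $\sPD^d(W)^{\g g}$ against the basis $\{D_\lambda\}$, resting on Molev's central elements $\mathbb{S}_\flat\in\bfZ^{|\flat|}(\gl(V))$ (whose Harish--Chandra images are the $s_\flat^*$) together with the linear independence of the shifted supersymmetric Schur polynomials and a Zariski-density argument. The only cosmetic difference is that the paper organizes the count as linear independence of the explicit images $\Delta_\flat=\check\rho(\mathbb{S}_\flat\otimes 1)$, read off from their spectra, rather than as injectivity of $\check\rho$ on $\bfZ^d(\gl(V))\otimes 1$ plus the filtered surjectivity of the Harish--Chandra map --- the latter being exactly what Molev's Theorem 7.5 supplies, so the ``alternative'' you sketch at the end is in fact the paper's argument.
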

\begin{proof}
The statement is a consequence of the results of \cite{Molev}.
In \cite[Thm 7.5]{Molev}, a family of elements $\mathbb S_\flat\in\bfZ^{|\flat|}(\gl(V))$ is constructed which is parametrized by partitions $\flat\in\mathrm{H}_{m,n}$.
Furthermore, in \cite[Thm 8.1]{Molev}
it is proved that
the map 
$\bfZ^{|\flat|}(\gl(V))\otimes 1\to\sPD^{|\flat|}(W)^\g g$
takes $\mathbb S_\flat$ to a differential operator $\Delta_\flat\in\sPD^{|\flat|}(W)^{\g g}$, which is defined in \cite[Sec. 8]{Molev}. 
To complete the proof of surjectivity of the map $\bfZ^d(\gl(V))\otimes 1\to\sPD^d(W)^{\g g}$, it is enough to show that for every $d\geq 0$,  the sets 
\[
\big\{\Delta_\flat\,:\,\flat\in\mathrm{H}_{m,n,k}, 0\leq k\leq d\big\}\text{ and }
\big\{D_\lambda\,:\,\lambda\in\breve{\mathrm{E}}_{m,n,k},0\leq k\leq d\big\}
\]
span the same subspace of $\sPD^d(W)^\g g$.
Since the above two sets have an equal number of elements, it is enough to show that the elements of 
$\big\{\Delta_\flat\,:\,\flat\in\mathrm{H}_{m,n,k}, 0\leq k\leq d\big\}$
are linearly independent. To prove the latter statement, we note that the spectrum of $\Delta_\flat$ 
can be expressed  in terms of the Harish-Chandra image of $\mathbb S_\flat$, which 
by \cite[Thm 7.5]{Molev} is equal to the shifted supersymmetric Schur polynomial $s_\flat^*$. Since the $s_\flat^*$ are linearly independent (see \cite[Cor. 7.2]{Molev} and subsequent remarks therein), the operators $\Delta_\flat$ are also linearly independent.
\end{proof}


For $\mu\in\breve{\mathrm{E}}_{m,n}$,
set $W_\mu:=V_\mu^*\otimes V_\mu$.
The operator 
$D_\lambda$ acts on $W_\mu$ by a 
scalar
$c_\lambda(\mu)\in\C$. For $\flat\in\mathrm{H}_{m,n}$, let 
$\breve{H}(\flat)$ denote the product of 
the hook lengths of all of the boxes in the Young diagram representation of $\flat$. We define a map
\[
\breve{\boldsymbol\Gamma}_\circ:
\breve{\mathrm{E}}_{m,n}\to
\C^{m+n}
\ \,,\ \,
\sum_{k=1}^m\flat_k\eps_k+\sum_{l=1}^n\flat_l^*\eps_{\oline l}\mapsto
(\flat_1,\ldots,\flat_m,\flat_1^*,\ldots,\flat_n^*).
\]
As in Section \ref{Sec-Sec5}, we denote the homogeneous part of highest degree of 
any
$p\in\sP(\g h^*)$ by $\oline p$. 
\begin{thm}
\label{THMAppB2}
Let $\lambda\in\breve{\mathrm{E}}_{m,n,d}$
and let $\flat\in\mathrm{H}_{m,n,d}$ be such that
$\boldsymbol\Gamma(\flat)=\lambda$. Then \[
c_\lambda=\frac{d!}{\breve{H}(\flat)}s_\flat^*
\circ\breve{\boldsymbol\Gamma}_\circ
.
\]
 In particular, $\oline c_\lambda=
\frac{d!}{\breve{H}(\flat)} 
 s_\flat^{}\circ \breve{\boldsymbol\Gamma}_\circ$.
\end{thm}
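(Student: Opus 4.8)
The plan is to identify the Capelli operator $D_\lambda$ with an explicit scalar multiple of Molev's operator $\Delta_\flat$, and then simply read off the eigenvalue polynomial. First I would record what is already contained in the proof of Theorem \ref{thabsscap}: the sets $\{\Delta_\flat:\flat\in\mathrm{H}_{m,n,k},\ 0\leq k\leq d\}$ and $\{D_\lambda:\lambda\in\breve{\mathrm{E}}_{m,n,k},\ 0\leq k\leq d\}$ are both bases of $\sPD^d(W)^{\g g}$, and by \cite[Thm 7.5]{Molev} together with \cite[Thm 8.1]{Molev} -- combined with the argument of Lemma \ref{cmulZa}, that a central element of $\bfZ(\gl(V))\otimes 1$ acts on $W_\mu=V_\mu^*\otimes V_\mu$ through its Harish--Chandra image evaluated at $\mu$ -- the operator $\Delta_\flat$ acts on $W_\mu$ by the scalar $s_\flat^*\big(\breve{\boldsymbol\Gamma}_\circ(\mu)\big)$; in particular $c_\lambda$ is the restriction to $\breve{\mathrm{E}}_{m,n}$ of a polynomial on $\g h^*$. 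I would also record the evident analogue of Lemma \ref{lem-clmuvanish} in this setting, with the same proof: writing $D_{\lambda'}=\sfm(\sum_k v_k\otimes v_k^*)$ with $v_k\in W_{\lambda'}\subseteq\sP^{|\lambda'|}(W)$ and $v_k^*\in W_{\lambda'}^*\subseteq\sS^{|\lambda'|}(W)$, the operator $D_{\lambda'}$ annihilates $\sP^{d'}(W)$ for $d'<|\lambda'|$, and, using Schur's lemma and the identity $\partial_b a=|\lambda'|!\,\langle a,b\rangle$ on $\sP^{|\lambda'|}(W)\times\sS^{|\lambda'|}(W)$, one gets $c_{\lambda'}(\mu)=d!\,\delta_{\lambda'\mu}$ whenever $\lambda',\mu\in\breve{\mathrm{E}}_{m,n,d}$.

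Next I would observe that, for $\lambda=\boldsymbol\Gamma(\flat)$ with $|\flat|=d$, both $D_\lambda$ and $\Delta_\flat$ annihilate $\sP^{d'}(W)$ for every $d'<d$: for $D_\lambda$ this is the displayed annihilation property, and for $\Delta_\flat$ it follows from the eigenvalue formula together with the vanishing of $s_\flat^*$ at the partition attached to any $\mu$ of size $d'<d$ (such a partition cannot contain $\flat$). Expanding an operator that kills every $\sP^{d'}(W)$ with $d'<d$ in the Capelli basis and evaluating successively on $\sP^{0}(W),\sP^{1}(W),\ldots,\sP^{d-1}(W)$ forces, by induction on $d'$ and the formula $c_{\lambda'}(\mu)=d!\,\delta_{\lambda'\mu}$, all coefficients attached to $D_{\lambda'}$ with $|\lambda'|<d$ to vanish; hence $\Delta_\flat=\sum_{\lambda'\in\breve{\mathrm{E}}_{m,n,d}}b_{\lambda'}D_{\lambda'}$. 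Evaluating this identity on $W_\mu$ for $\mu\in\breve{\mathrm{E}}_{m,n,d}$ gives $s_\flat^*\big(\breve{\boldsymbol\Gamma}_\circ(\mu)\big)=d!\,b_\mu$, and the vanishing/normalization of shifted supersymmetric Schur polynomials -- namely that $s_\flat^*$ takes the value $\breve{H}(\flat)$ at its own partition and $0$ at every other partition of the same size -- yields $b_\lambda=\breve{H}(\flat)/d!$ and $b_{\lambda'}=0$ otherwise. Thus $D_\lambda=\tfrac{d!}{\breve{H}(\flat)}\Delta_\flat$.

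It then follows that $c_\lambda(\mu)=\tfrac{d!}{\breve{H}(\flat)}s_\flat^*\big(\breve{\boldsymbol\Gamma}_\circ(\mu)\big)$ for every $\mu\in\breve{\mathrm{E}}_{m,n}$. Since $\breve{\boldsymbol\Gamma}_\circ$ is the restriction of a linear isomorphism $\g h^*\xrightarrow{\ \sim\ }\C^{m+n}$, the right-hand side is the restriction of a polynomial on $\g h^*$, and as $\breve{\mathrm{E}}_{m,n}$ is Zariski dense in $\g h^*$ this proves the polynomial identity $c_\lambda=\tfrac{d!}{\breve{H}(\flat)}\,s_\flat^*\circ\breve{\boldsymbol\Gamma}_\circ$. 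For the final assertion, taking top homogeneous parts commutes with multiplication by a scalar and with precomposition by the linear map $\breve{\boldsymbol\Gamma}_\circ$, and the top homogeneous part of $s_\flat^*$ is the supersymmetric Schur polynomial $s_\flat$ (\cite{Molev}); hence $\oline c_\lambda=\tfrac{d!}{\breve{H}(\flat)}\,s_\flat\circ\breve{\boldsymbol\Gamma}_\circ$. The main obstacle is not conceptual but a matter of bookkeeping: one must check that Molev's normalizations of $\Delta_\flat$, of $\mathbb{S}_\flat$, and of $s_\flat^*$ agree with ours with no stray constant, and that the vanishing locus and the value $\breve{H}(\flat)$ of $s_\flat^*$ are recorded in precisely the coordinates furnished by $\breve{\boldsymbol\Gamma}_\circ$ -- this is where the shift $\flat_l^*=\max\{\flat'_l-m,0\}$ and the hook-length product $\breve{H}(\flat)$ have to be tracked carefully.
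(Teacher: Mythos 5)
Your argument is correct and is essentially the natural expansion of the paper's proof, which simply cites Molev's Theorems 7.3 and 7.5 (the vanishing/normalization characterization of $s_\flat^*$ and the Harish--Chandra image of $\mathbb S_\flat$) — exactly the ingredients you use to pin down $D_\lambda=\tfrac{d!}{\breve{H}(\flat)}\Delta_\flat$ and read off the spectrum. No change of approach, just more bookkeeping made explicit.
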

\begin{proof}
Follows from \cite[Thm 7.3]{Molev} and 
\cite[Thm 7.5]{Molev}.
\end{proof}

The Frobenius map 
\[
\mathsf{F}:\mathrm{H}_{m,n}\to \C^{m+n}
\] 
of \cite[Sec. 6]{SerVes} is given by
$\mathsf{F}(\flat):=(\sfx(\flat),\ldots,\sfx_m(\flat),\sfy_1(\flat),\cdots,\sfy_n(\flat))$, where
\[
\begin{cases}
\sfx_k(\flat):=\flat_k-k+\frac12(1-n+m)
 & \text{ for }1\leq k\leq m, \\ 
\sfy_l(\flat):=\flat_l^*-l+\frac12(m+n+1) & \text{ for }1\leq l\leq n.%
\end{cases}
\]
As in Lemma \ref{lem6..3}, the map
$\boldsymbol\Gamma\circ\mathsf{F}^{-1}$ extends to an affine 
linear map $\Psi:\C^{m+n}\to \g h^*$. 
Thus we can define the Frobenius transform \[
\mathscr{F}:\sP(\g h^*)\to\sP(\C^{m+n})
\] as in Definition 
\ref{FrobT}, namely by $\mathscr{F}(p):=p\circ\Psi$.


For $\lambda\in\breve{\mathrm{E}}_{m,n,d}$, 
by Theorem \ref{thabsscap} there exists an element
 $z_\lambda\in \bfZ(\gl(V))\cap\bfU^d(\gl(V))$
 such that 
$D_\lambda=\check\rho(1\otimes z_\lambda)$. 
Then for a highest weight vector
$v_\mu^*\otimes v_\mu^{}\in V_\mu^*\otimes V_\mu^{}\cong W_\mu$, we have
\[
c_\lambda(\mu)v_\mu^*\otimes v_\mu^{}
=
D_\lambda
v_\mu^*\otimes v_\mu^{}
=
v_\mu^*\otimes\check\rho(z_\lambda)v_\mu^{}
=\mu(\HC^+(z_\lambda))v_\mu^*\otimes v_\mu^{},
\]
where $\HC^+:\bfU(\gl(V))\to \bfU(\g h)$ is the Harish-Chandra projection corresponding to the decomposition
$
\bfU(\gl(V))=
\left(\bfU(\gl(V))
\g n^+
+
\g n^-\bfU(\gl(V))
\right)
\oplus\bfU(\g h)
$.
From the description of the image of the Harish-Chandra projection (see for example \cite{sergeev82}, \cite{sergeev3}, \cite{KacZ}, \cite{Gorelik}, or \cite[Sec. 2.2.3]{ChWabook}), we obtain 
explicit generators for $\HC^+(\bfZ(\gl(V))$, similar to the $G_d$ defined in \eqref{dfnGd}.
Furthermore, by the canonical isomorphism
$\bfU(\g h)\cong\sS(\g h)\cong \sP(\g h^*)$, we can 
consider 
$\HC^+(\bfZ(\gl(V))$ as a subalgebra of $\sP(\g h^*)$, which we henceforth denote by 
$\mathbf{I}(\g h^*)$.
A direct calculation using the explicit 
generators of $\mathbf{I}(\g h^*)$ proves that $\mathscr{F}(\mathbf I(\g h^*))\subseteq\Lambda^\natural_{m,n,1}$, where 
$\Lambda^\natural_{m,n,1}$ 
consists of polynomials $f(\sfx_1,\ldots,\sfx_m,\sfy_1,\ldots,\sfy_n)$, 
which are 
separately 
symmetric in $\sfx:=(\sfx_1,\ldots,\sfx_m)$ and
in $\sfy:=(\sfy_1,\ldots,\sfy_n)$, and which satisfy
the relation 
\begin{equation*}
\textstyle f(\sfx+\frac{1}{2}\mathsf{e}_{k,m},\sfy-\frac{1}{2}\mathsf{e}_{l,n})=f(\sfx-\frac{1}{2}\mathsf{e}_{k,m},\sfy+\frac{1}{2}\mathsf{e}_{l,n}) 
\end{equation*}
on every hyperplane $\sfx_k+ \sfy_l=0$, where $1\leq k\leq m$ and $1\leq
l\leq n$. Now let $SP_\flat^*$ be the basis of 
$\Lambda^\natural_{m,n,1}$ introduced in \cite[Sec. 6]{SerVes} for $\theta=1$.
An argument similar to the proof of 
Theorem \ref{thmconnSV} 
yields the following statement.
\begin{thm}
\label{THMAB3}
Let
$\lambda\in\breve{\mathrm{E}}_{m,n,d}$, 
and let $\flat\in\mathrm{H}_{m,n,d}$ be chosen such that
$\lambda=\boldsymbol\Gamma(\flat)$, then
\[
\mathscr{F}(c_\lambda)=\frac{d!}{\breve{H}(\flat)}SP_\flat^*.
\]

\end{thm}

\end{alphasection}


\begin{thebibliography}{99}
\bibitem{AllSch} Alldridge, A.; Schmittner, S., \emph{Spherical representations of Lie supergroups}. J. Funct. Anal. 268 (2015), no. 6, 1403--1453.

\bibitem{BereleRegev}  Berele, A.; Regev, A., \emph{\ Hook Young diagrams
with applications to combinatorics and to representations of Lie
superalgebras}. Adv. Math. 64 (1987), no. 2, 118--175.

\bibitem{brini} Brini, A.; Huang, R. Q.; Teolis, A. G. B.,\emph{The umbral
symbolic method for supersymmetric tensors}. Adv. Math. 96
(1992), 123--193.

\bibitem{ChWa} Cheng, S.-J.; Wang, W., \emph{Howe duality for Lie
superalgebras}. Compositio Math. 128 (2001), 55--94.

\bibitem{ChWabook} Cheng, S.-J.; Wang, W., \emph{Dualities and
representations of Lie superalgebras}, Graduate Studies in Mathematics, 144.
American Mathematical Society, Providence, RI, 2012. xviii+302 pp.

\bibitem{GoodmanWallach}
 Goodman, R.; Wallach, N. R., \emph{Symmetry, representations, and invariants}. Graduate Texts in Mathematics, 255. Springer, Dordrecht, 2009. xx+716 pp.


\bibitem{Gorelik}
Gorelik, M.,
\emph{The Kac construction of the centre of U(g) for Lie superalgebras}.
J. Nonlinear Math. Phys. 11 (2004), no. 3, 325--349. 

\bibitem{HoweRem}
Howe, R.,
\emph{Remarks on classical invariant theory}.
 Trans. Amer. Math. Soc. 313 (1989), no. 2, 539--570. 


\bibitem{HoweUmeda} Howe, R.; Umeda, T.,
\emph{The Capelli identity, the double commutant theorem, and multiplicity-free actions}.
Math. Ann. 290 (1991), no. 3, 565--619. 


\bibitem{Jacobson} Jacobson, N., \emph{Basic algebra II.} Second edition.
W. H. Freeman and Company, New York, 1989. xviii+686 pp.


\bibitem{KacZ} Kac, V. G.
\emph{
Laplace operators of infinite-dimensional Lie algebras and theta functions}.
Proc. Nat. Acad. Sci. U.S.A. 81 (1984), no. 2, Phys. Sci., 645--647. 




\bibitem{KostantSahi1}
Kostant, B.; Sahi, S., \emph{
Jordan algebras and Capelli identities}. 
Inv. Math. 112 (1993), no. 3, 657--664.


\bibitem{Knop} Knop, F., 
\emph{Some remarks on multiplicity free spaces}. Representation theories and algebraic geometry (Montreal, PQ, 1997), 301--317, NATO Adv. Sci. Inst. Ser. C Math. Phys. Sci., 514, Kluwer Acad. Publ., Dordrecht, 1998.


\bibitem{KnopSahi}
Knop, F.; Sahi, S., \emph{Difference equations and symmetric polynomials defined by their zeros}. Internat. Math. Res. Notices  (1996), no. 10, 473--486.



\bibitem{KostantSahi2}
Kostant, B.; Sahi, S.,\emph{
The Capelli identity, tube domains, and the generalized Laplace transform}. Adv. Math. 87 (1991), no. 1, 71--92.


\bibitem{Molev} 
Molev, A.,
\emph{Factorial supersymmetric Schur functions and super Capelli identities}. Kirillov's seminar on representation theory, 109--137,
Amer. Math. Soc. Transl. Ser. 2, 181, Amer. Math. Soc., Providence, RI, 1998.






\bibitem{Musson} Musson, I., \emph{Lie superalgebras and enveloping algebras}.
Graduate Studies in Mathematics, 131. American Mathematical Society,
Providence, RI, 2012. xx+488 pp.


\bibitem{Okounkov}
Okounkov, A.; Olshanski, G., 
\emph{Shifted Jack polynomials, binomial formula, and applications}. Math. Res. Lett. 4 (1997), no. 1, 69--78.


\bibitem{OlPr} Olshanskii, G. I.; Prati, M. C., \emph{Extremal weights of
finite-dimensional representations of the Lie superalgebra $\gl_{n|m}$}.
Nuovo Cimento A (11) 85 (1985), no. 1, 1--18.


\bibitem{Sahi} Sahi, S., \emph{The spectrum of certain invariant differential operators associated to a Hermitian symmetric space}. Lie theory and geometry, 569--576,
Progr. Math., 123, Birkhäuser Boston, Boston, MA, 1994.


\bibitem{scheunert} Scheunert, M., \emph{Eigenvalues of the Casimir operators for the general linear, the special linear, and the orthosymplectic Lie superalgebras}. J. Math. Phys. 24 (1983), no. 11, 2681--2688.

\bibitem{sergeev82} Sergeev, A. N., \emph{Invariant polynomial functions on Lie superalgebras}. C. R. Acad. 
Bulgare Sci. (1982), no. 5, 573--576.


\bibitem{sergeev} Sergeev, A. N., \emph{Tensor algebra of the identity
representation as a module over the Lie superalgebras $GL(n,m)$ and $Q(n)$}.
(Russian)  Mat. Sb. (N.S.) 123(165) (1984), no. 3,
422--430.


\bibitem{sergeev3} Sergeev, A. N.,
\emph{The invariant polynomials on simple Lie superalgebras}. 
Represent. Theory 3 (1999), 250--280.



\bibitem{SerVes} Sergeev, A. N.; Veselov, A. P., \emph{\ Generalised
discriminants, deformed Calogero--Moser--Sutherland operators and super-Jack
polynomials}. Adv. Math. 192 (2005) 341--375.

\bibitem{Weyl}
 Weyl, H.,
\emph{The Classical Groups. Their Invariants and Representations}. Princeton University Press, Princeton, N.J., 1939. xii+302 pp.

\bibitem{Zhel}
\v{Z}elobenko, D. P.,
\emph{Compact Lie groups and their representations}.
Translated from the Russian by Israel Program for Scientific Translations. Translations of Mathematical Monographs, Vol. 40. American Mathematical Society, Providence, R.I., 1973. viii+448 pp. 


\end{thebibliography}
\end{document}